\def\R{\mathbb R}
\def\Z{\mathbb Z}
\def\C{\mathbb C}
\newtheorem{thm}{Theorem}[section]
\newtheorem{prop}[thm]{Proposition}
\newtheorem{cor}[thm]{Corollary}
\newtheorem{lemma}[thm]{Lemma}
\theoremstyle{remark}
\newtheorem{remark}[thm]{Remark}
\numberwithin{equation}{section}
\begin{document}
%
\title[The cut loci on ellipsoids]{The cut 
loci on ellipsoids and certain Liouville manifolds}
\author{Jin-ichi Itoh}
\address{Department of Mathematics, Faculty of Education,
Kumamoto University, Kumamoto 860-8555, Japan.}
\email{j-itoh@gpo.kumamoto-u.ac.jp}
\author{Kazuyoshi Kiyohara}
\address{Department of Mathematics, Faculty of Science, Okayama University, Okayama 700-8530, Japan.}
\email{kiyohara@math.okayama-u.ac.jp}
\subjclass[2000]{primary 53C22, secondary 53A05}
\begin{abstract}
We show that some riemannian manifolds
diffeomorphic to the sphere have the property that
the cut loci of general
points are smoothly embedded closed disks of codimension one.
Ellipsoids with distinct axes are typical examples of
such manifolds.
\end{abstract}
\maketitle
\section{Introduction}
On a complete riemannian manifold, any geodesic $\gamma(t)$
starting at a point $\gamma(0)=p$ has the property that
any segment $\{\gamma(t)\mid 0\le t\le T\}$ is minimal, 
i.e., the length of the segment is equal to the distance
between the points $p$ and $\gamma(T)$, if $T>0$ is
small. 
If the supremum $t_0$ of the set of such $T$ is
finite, then the point $\gamma(t_0)$ is called the {\it cut 
point} of $p$ along the geodesic $\gamma(t)$ $(t\ge 0)$.
The {\it cut locus} of the point $p$ is then defined as the set of all cut
points of $p$ along the geodesics starting at $p$.
For the general properties of cut loci, we refer to
\cite{Kl}, \cite{Sa2}.

The study of cut locus was started at 1905 by H. Poincar\'{e} \cite{P} 
in the case of convex surfaces, and there are several classical results, 
for example, \cite{M}, \cite{We}, \cite{Wh}. 
From its definition, the cut locus of a point $p$ on a compact manifold $M$ 
is homotopically equivalent to $M - \{p\}$,
but it can be very complicated, see \cite{GS}, \cite{I}. 
The structure of cut locus was studied in 
connection with 
the singularity theory, see \cite{Bu}, \cite{Bu1}, \cite{Thom}. 
Recently, a property of cut locus was used to solve Ambrose's problem on surfaces 
\cite{Heb}, \cite{I}, and it was proved that the distance function to the cut locus has 
Lipschitz continuity \cite{IT}, \cite{LN}. 
Other applications of cut locus are found in \cite{DO}, \cite{LN} also. 

It is well known that the cut locus of any point on the sphere of constant
curvature consists of a single point, and
it is also known that this property 
characterizes the sphere of constant curvature (an 
affirmatively solved case of the
Blaschke conjecture, see \cite{B}).
However, in most cases, to determine cut loci are quite difficult problems.
There are only a few cases where the cut loci are well
understood; for example, analytic surfaces \cite{M},
symmetric spaces and some homogeneous spaces \cite{H}, 
\cite{Sa-1}, \cite{Sa0}, \cite{Sa1}, \cite{Ta},
certain surfaces of revolution \cite{GMST}, \cite{ST2}, \cite{T1}, \cite{T2}, 
Alexandrov surfaces \cite{ShiT}, 
tri-axial ellipsoids and some Liouville
surfaces \cite{IK1}, \cite{IK2}, \cite{ST1}
(\cite{ST1} is an experimental work). 
Especially in higher dimensional case 
there are not many results without symmetric spaces and some singular spaces \cite{IV}, 
even if using computational approximations.  

In the earlier paper \cite{IK1}, we proved that the cut
locus of a non-umbilic point on a tri-axial ellipsoid is a
segment of the curvature line containing the antipodal
point, inspired by an experimental work \cite{IS}. Also, 
we gave the complete proof of Jacobi's last geometric statement (\cite{J}, \cite{JW}, see also
\cite{Si}, which contains historical remarks). 
Furthermore, we have seen in \cite{IK2} 
that there are many
surfaces possessing such simple cut loci.
Surfaces we considered in \cite{IK2} are
so-called Liouville  surfaces, i.e., surfaces
whose geodesic flows possess
first integrals 
which are fiberwise quadratic forms.
In such cases the geodesic equations are
explicitly solved by quadratures.
But, to determine cut loci we needed some 
additional conditions, which is satisfied in
the case of ellipsoid.

In the present paper, we shall give a higher dimensional
version of the above-mentioned results. We shall consider
cut loci of points on certain Liouville manifolds
diffeomorphic to $n$-sphere, and prove that the cut locus 
of any point is a smoothly embedded, closed 
$(n-1)$-disk, if the point does not
belong to a certain submanifold of codimension two. We shall 
also prove that the cut locus of a point on that submanifold 
is a closed $(n-2)$-disk.
The $n$-dimensional ellipsoids with $n+1$ distinct axes
will be shown to possess such properties. 
Here, ``Liouville manifold'' is a
higher dimensional version of Liouville surface, which
we shall explain in the next section.

Now, taking the ellipsoid $M: \sum_{i=0}^n  u_i^2/a_i=1$ ($
0<a_n<\dots<a_0$) as an example, let us illustrate our results
in detail. Let $N_k$ and $J_k$ be the submanifolds of $M$ defined by
\begin{gather*}
N_k=\{u=(u_0,\dots,u_n)\in M\ |\ u_k=0\ \}\qquad(0\le k\le n)\\
J_k=\{u\in M\ |\ u_k=0,\quad \sum_{i\ne k}\frac{u_i^2}{a_i-a_k}=1\ \}\qquad (1\le k\le n-1)
\end{gather*}
Then: $N_k$ is totally geodesic, codimension 1; $J_k
\subset N_k$, $J_k$ is diffeomorphic to $S^{k-1}\times
S^{n-k-1}$; $\bigcup_kJ_k$ is the set of points where
some principal curvature with respect to the inclusion
$M\subset \R^{n+1}$ has multiplicity $\ge 2$;
denoting by $(\lambda_1,\dots,\lambda_n)$ the 
elliptic coordinate system on $M$ such that
$a_k\le \lambda_k\le
a_{k-1}$ (see below), we have
\begin{equation*}
N_k=\{\lambda_k=a_k\quad\text{or}\quad\lambda_{k+1}=a_k\ \},
\qquad J_k=\{\lambda_k=\lambda_{k+1}=a_k\ \} .
\end{equation*}
Let us denote by $C(p)$ the cut locus of a point $p\in M$.
Let $(\lambda_1^0,\dots,\lambda_n^0)$ be the elliptic 
coordinates of $p$.
Then:
\begin{itemize}
\item[(1)] If $p\not\in J_{n-1}$, then $C(p)$ is an 
$(n-1)$-dimensional closed disk which is contained in
a submanifold (possibly with boundary) defined by 
$\lambda_n=\lambda_n^0$. Also,
$C(p)$ contains the antipodal point of $p$ in its interior.
For each interior point $q$ of $C(p)$ there are exactly two
minimal geodesics joining $p$ and $q$; the tangent vectors
of those geodesics at $p$ are symmetric with respect to the
hyperplane $d\lambda_n=0$. For each boundary point $q$ of 
$C(p)$, there is a unique minimal geodesic from $p$ to $q$, 
along which $q$ is the first conjugate point of $p$ with
multiplicity one.
\item[(2)] If $p\in J_{n-1}$, then $C(p)$ is an 
$(n-2)$-dimensional closed disk contained in $J_{n-1}$. It
is identical with the cut locus of $p$ in the 
$(n-1)$-dimensional ellipsoid $N_{n-1}$. For each interior 
point $q$ of $C(p)$ there is an $S^1$-family of minimal 
geodesics joining $p$ and $q$; the tangent vectors
of those geodesics at $p$ form a cone whose orthogonal 
projection to $T_pJ_{n-1}$ is one-dimensional. For each boundary point $q$ of 
$C(p)$, there is a unique minimal geodesic from $p$ to $q$, 
and along it $q$ is the first conjugate point of $p$; but the
multiplicity is two in this case.
\end{itemize}
Here, the elliptic coordinate system $(\lambda_1,\dots,\lambda_n)$ on $M$ $(\lambda_n\le \dots\le\lambda_1)$ is defined by the following identity in $\lambda$:
\begin{equation*}
\sum_{i=0}^n\frac{u_i^2}{a_i-\lambda}-1=\frac{\lambda\prod_{k=1}^n(\lambda_k-\lambda)}{\prod_i (a_i-\lambda)} .
\end{equation*}
For a fixed $u\in M$, $\lambda_k$ are determined by $n$ 
``confocal quadrics'' passing through $u$. 
From $\lambda_k$'s, $u_i$ are explicitly described as:
\begin{equation*}
u_i^2=\frac{a_i\prod_{k=1}^n(\lambda_k-a_i)}{\prod_{j\ne i} (a_j-a_i)} .
\end{equation*}

The organization of the paper is as follows.
In \S2 we shall briefly explain Liouville manifolds in the form
what we need. 
In \S 3 we shall illustrate how to solve geodesic 
equations on a Liouville manifold. Since the geodesic flow is
completely integrable in this case, solutions are given by
integrating a system of closed $1$-forms. In this particular 
case, a natural coordinate system provides ``separation
of variables''. This coordinate system is analogous to the
elliptic coordinate system on ellipsoids. In \S4 we shall give 
an assumption under which the results on cut loci are
obtained. Some useful inequalities are proved there.

In \S5 basic properties of Jacobi fields and their
zeros are
investigated, which are crucial in the arguments
of the following sections. In \S6 we define a value $t_0(\eta)$ to each unit covector $\eta$, which will indicate
the cut point of the geodesic with initial covector $\eta$. 
Then, we prove some preliminary
facts on the behavior of geodesics starting at
a fixed point. The main theorem,
Theorem \ref{thm:cut}, will be stated in \S7 and 
proved in \S\S 7-9.

In the forthcoming paper, we shall clarify the 
structures of conjugate loci of general points on
certain Liouville manifolds, which will be a higher
dimensional version of ``the last geometric statement
of Jacobi'' explained in \cite{IK1}, \cite{Si}.

\subsection*{Preliminary remarks and notations}
We shall consider geodesic equations in the hamiltonian formulation.
Let $M$ be a riemannian manifold and $g$ its riemannian 
metric.
By $\flat:TM\to T^*M$ we denote the bundle isomorphism
determined by $g$ (Legendre transformation). We also use the
symbol $\sharp=\flat^{-1}$.
The canonical 1-form on $T^*M$ is denoted by $\alpha$.
For a canonical coordinate system $(x,\xi)$ on $T^*M$ ($
x$ being a coordinate system on $M$), $\alpha$ is expressed as
$\sum_i\xi_idx_i$. Then the 2-form $d\alpha$ 
represents the standard
symplectic structure on $T^*M$.

Let $E$ be the function on $T^*M$ defined by 
\begin{equation*}
E(\lambda)=\frac12g(\sharp(\lambda),\sharp(\lambda))=
\frac12\sum_{i,j}g^{ij}(x)\xi_i\xi_j
\end{equation*}
We call it the (kinetic) energy function of $M$.
For a function $F, H$ on $T^*M$, we define a vector field 
$X_F$ and the Poisson bracket $\{F,H\}$ by
\begin{equation*}
X_F=\sum_i\left(\frac{\partial F}{\partial \xi_i}\frac{\partial}
{\partial x_i}-\frac{\partial F}{\partial x_i}\frac{\partial}
{\partial \xi_i}\right)\, ,\qquad
\{F,H\}=X_FH\,.
\end{equation*}
Then $X_E$ generates the geodesic flow, i.e., the projection of each
integral curve of $X_E$ to $M$ is a geodesic of the riemannian manifold
$M$.

\section{Liouville manifolds}
By definition, Liouville manifold $(M,\mathcal F)$ is a pair 
of an $n$-dimensional riemannian manifold $M$ and 
an $n$-dimensional vector space
$\mathcal F$ of functions on $T^*M$ such that i) each $F\in \mathcal F$ is
fiberwise a quadratic polynomial; ii) those quadratic forms are
simultaneously normalizable on each fiber; iii) $\mathcal F$ is commutative with respect to the Poisson bracket; and, iv)
$\mathcal F$ contains the hamiltonian of the geodesic flow.
For the general theory of Liouville manifolds, we refer to \cite{Ki2}.
In this paper we only need a subclass of ``compact Liouville
manifolds of rank one and type (A)'', described in \cite{Ki2}.
So, in this section,
we shall briefly explain about it.

Each Liouville manifold treated here is constructed
from $n+1$ constants $a_0>\cdots > a_{n}>0$ and a positive $C^\infty$
function $A(\lambda)$ on the closed interval $a_n\le \lambda\le a_0$. 
Let $\alpha_1,\dots,\alpha_n$ be positive numbers defined by
\begin{equation*}
\alpha_i=2\int_{a_i}^{a_{i-1}}\frac{A(\lambda)\ d\lambda}
{\sqrt{(-1)^i\prod_{j=0}^n (\lambda-a_j)}}\qquad
(i=1,\dots,n)
\end{equation*}
Define the function $f_i$ on the circle $\R/\alpha_i\Z=
\{x_i\}$ $(1\le i\le n)$
by the conditions:
\begin{gather}\label{eq:fdiff}
\left(\frac{df_i}{dx_i}\right)^2=\frac{(-1)^i4\prod_{j=0}
^n (f_i-a_j)}{A(f_i)^2}\\
f_i(0)=a_i,\ f_i(\frac{\alpha_i}4)=a_{i-1},\quad
f_i(-x_i)=f_i(x_i)=f_i(\frac{\alpha_i}2-x_i)\ .
\end{gather}
Then the range of $f_i$ is $[a_i,a_{i-1}]$.  

Put
\begin{equation*}
R=\prod_{i=1}^n(\R/\alpha_i\Z)\ .
\end{equation*}
Let $\tau_i$ $(1\le i\le n-1)$ be the involutions
on the torus $R$ defined by
\begin{equation*}
\tau_i(x_1,\dots,x_n)=(x_1,\dots,x_{i-1},-x_i,
\frac{\alpha_{i+1} }2-x_{i+1},x_{i+2},\dots,x_n) \ ,
\end{equation*}
and let $G$ $(\simeq (\Z/2\Z)^{n-1})$ be the group of transformations generated by $\tau_1$, $\dots$,
$\tau_{n-1}$. Then it turns out that the quotient space
$M=R/G$ is homeomorphic to the $n$-sphere. Moreover,
let $p\in R$ be a ramification point of the branched covering
$R\to R/G$. Suppose $p$ is fixed by $\tau_{i_1}$,
$\dots$, $\tau_{i_k}$, and is not fixed by other $\tau_j$'s. Taking a suitable coordinate
system $(y_1,\dots,y_n)$ obtained from $(x)$ by exchanges $(x_i\leftrightarrow x_j)$ and translations $(x_i\to x_i+c)$, 
it may be supposed that $p$ is represented by $y=0$ and 
$\tau_{i_l}$ is given by
\begin{equation*}
(y_1,\dots,y_n)\mapsto (y_1,\dots,y_{2l-2},-y_{2l-1},
-y_{2l},y_{2l+1},\dots,y_n)\ .
\end{equation*}
Then we can define a differentiable structure on $M$ so
that 
\begin{equation*}
(y_1^2-y_2^2,2y_1y_2,\dots,y_{2k-1}^2-y_{2k}^2,
2y_{2k-1}y_{2k},y_{2k+1},\dots,y_n)
\end{equation*}
is a smooth coordinate system around the image of $p$.
With this $M$ is diffeomorphic to
the standard $n$-sphere.
One can prove those facts by comparing the branched
covering $R\to R/G$ with the standard case; see
\cite[p.73]{Ki2}.

Now, put
\begin{equation*}
b_{ij}(x_i)=
\begin{cases}
(-1)^i\prod_{\substack{1\le k\le n-1\\ k\ne j}}(f_i(x_i)-a_k)\quad (1\le j\le n-1)\\
(-1)^{i+1}\prod_{k=1}^{n-1}(f_i(x_i)-a_k)\qquad (j=n)
\end{cases}\ ,
\end{equation*}
and define functions $F_1$, $\dots$, $F_n=2E$ on the cotangent
bundle by
\begin{equation}\label{integrals}
\sum_{j=1}^n b_{ij}(x_i)F_j=\xi_i^2\ ,
\end{equation}
where $\xi_i$ are the fiber coordinates with respect to
the base coordinates $(x_1,\dots,x_n)$. Although there
are points on $T^*R$ where $F_i$ are not well-defined,
it turns out that $F_i$ represent well-defined smooth
functions on $T^*M$. Computing the inverse matrix of $(b_{ij})$
explicitly, we have
\begin{align*}
2E=&\sum_i\frac{(-1)^{n-i} \xi^2_i}{\prod_{l\ne i}
(f_l-f_i)}\\
F_j=&\frac1{\prod_{\substack{1\le k\le n-1\\k\ne j}}(a_k-a_j)}
\sum_i\frac{(-1)^{n-i}\prod_{l\ne i}(f_l-a_j)}
{\prod_{l\ne i}(f_l-f_i)}\ \xi^2_i\qquad
(j\le n-1)\ .
\end{align*}

One can also see that $E$, restricted to each
cotangent space of $M$,  is a positive definite quadratic
form.  Therefore
\begin{equation}\label{eq:metric}
g=\sum_i(-1)^{n-i}\left(\prod_{l\ne i}(f_l-f_i)\right)\,dx_i^2
\end{equation}
is a well-defined riemannian metric on $M$, and $E$
is the hamiltonian of the associated geodesic flow.
We call $E$ the energy function of the riemannian manifold 
$M$. From the formula (\ref{integrals}) one can easily see 
that
\begin{equation*}
\{F_i,F_j\}=0\qquad (1\le i,j\le n)\ ,
\end{equation*}
where $\{,\}$ denotes the Poisson bracket (see 
\cite[Prop.\,1.2.3]{Ki2}). In particular, the geodesic flow
is completely integrable in the sense of hamiltonian
mechanics.

As examples, if $A(\lambda)$ is a constant function, then
$M$ is the sphere of constant curvature. This case is
explained in detail in \cite[pp.71--74]{Ki2}.  If $A(\lambda)
=\sqrt{\lambda}$, then $M$ is isometric to the ellipsoid
$\sum_{i=0}^n{u_i^2}/{a_i}=1$. In this case, the system 
of functions
$(f_1(x_1),\dots,f_n(x_n))$ 
is nothing but the elliptic coordinate system (see Introduction), i.e., $f_i(x_i)=\lambda_i$.
One can easily check that the induced metric $\sum_i du_i^2$
coincides with the formula (\ref{eq:metric}) when $f_i$
satisfy the equations (\ref{eq:fdiff}) and
$A(\lambda)=\sqrt{\lambda}$.

Finally, let us define certain submanifolds of $M$ which are 
analogous to those for the ellipsoid stated in 
Introduction:  Put
\begin{gather*}
N_k=\{x\in M\ |\ f_k(x_k)=a_k \quad\text{or}\quad
f_{k+1}(x_{k+1})=a_k\}\quad(0\le k\le n) ,\\
J_k=\{x\in M\ |\ f_k(x_k)=
f_{k+1}(x_{k+1})=a_k\}\quad (1\le k\le n-1) .
\end{gather*}
Then we have, putting $(F_k)_p=F_k\vert_{T_p^*M}$,
\begin{lemma}\label{lemma:subm}
\begin{itemize}
\item[(1)] $J_k=\{p\in M\ |\ (F_k)_p=0\}$.
\item[(2)] $N_k=\{p\in M\ |\ \text{\rm rank }(F_k)_p\le 1\}$
\quad $(1\le k\le n-1)$.
\item[(3)] $\bigcup_k J_k$ is identical with the branch
locus of the covering $R\to M=R/G$.
\item[(4)] $N_k$ is a totally geodesic submanifold of codimension one \ $(0\le k\le n)$.
\item[(5)] $J_k\subset N_k$, $J_k$ is diffeomorphic to
$S^{k-1}\times S^{n-k-1}$.
\end{itemize}
\end{lemma}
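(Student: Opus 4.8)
The plan is to reduce the whole lemma to the explicit formula for the commuting forms $F_j$ and to the combinatorics of the branched covering $\pi\colon R\to M=R/G$. I would begin on the open dense set where the values $f_1(x_1),\dots,f_n(x_n)$ are pairwise distinct. Since $f_l$ takes values in $[a_l,a_{l-1}]$ and consecutive ranges meet only at the shared endpoint $a_l$, one has $f_1\ge\dots\ge f_n$ with $f_l=f_{l+1}$ possible only at the common value $a_l$; thus the non-distinct locus is exactly $\bigcup_m\{f_m=f_{m+1}=a_m\}=\bigcup_m J_m$, which I will identify with the branch locus in part (3). On this open set the displayed formula presents $(F_j)_p$ ($j\le n-1$) as the \emph{diagonal} form $\sum_i c_i\,\xi_i^2$ with
\[
c_i=\frac{(-1)^{n-i}}{\prod_{m\ne j}(a_m-a_j)}\cdot\frac{\prod_{l\ne i}(f_l-a_j)}{\prod_{l\ne i}(f_l-f_i)}.
\]
Now $c_i=0$ iff $f_l=a_j$ for some $l\ne i$, and $f_l=a_j$ forces $l\in\{j,j+1\}$. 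A short case analysis gives: if neither $f_j=a_j$ nor $f_{j+1}=a_j$ then every $c_i\ne0$ and the rank is $n$; if exactly one of them holds the rank is $1$; and both cannot hold off the branch locus. This proves (1) and (2) over $M\setminus\bigcup_m J_m$ and identifies $N_j\setminus J_j$ as the rank-one locus.

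The remaining point for (1) and (2), and the step I expect to be the main obstacle, is the behaviour of $(F_k)_p$ on the branch locus, where both the coordinates $(x)$ and the formula above degenerate. Here I would pass to the smooth chart supplied by the construction: near a generic point of $J_m$ (fixed only by $\tau_m$) one has $(y_1,y_2)\sim(x_m,x_{m+1})$ and the smooth coordinates $(z_1,z_2,\dots)=(y_1^2-y_2^2,2y_1y_2,\dots)$. Pulling back a covector $\sum_a\eta_a\,dz_a$ gives $\xi_m=2y_1\eta_1+2y_2\eta_2$ and $\xi_{m+1}=-2y_2\eta_1+2y_1\eta_2$, so that $\xi_m=\xi_{m+1}=0$ at the branch point; substituting into $\sum_i c_i\xi_i^2$ and verifying that the apparent poles $1/(f_l-f_i)$ cancel, one evaluates $(F_k)_p$ as an honest quadratic form in $\eta$. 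The expected outcome is $(F_k)_p=0$ when $k=m$, and rank $\ge2$ when $k\ne m$ and $p\notin N_k$; checking these cancellations carefully is the technical heart of the argument, and completes (1) and (2).

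For (3) I would compute fixed-point sets directly. From $f_i(-x_i)=f_i(x_i)=f_i(\tfrac{\alpha_i}2-x_i)$ with $f_i(0)=a_i$ and $f_i(\tfrac{\alpha_i}4)=a_{i-1}$, the involution $\tau_i$ fixes $x$ iff $x_i\in\{0,\tfrac{\alpha_i}2\}$ and $x_{i+1}\in\{\tfrac{\alpha_{i+1}}4,\tfrac{3\alpha_{i+1}}4\}$, i.e.\ iff $f_i=a_i$ and $f_{i+1}=a_i$; hence $\pi(\mathrm{Fix}\,\tau_i)=J_i$. For a general $g=\prod_{i\in S}\tau_i$, on the coordinate $x_m$ the composite of $\tau_{m-1}$ and $\tau_m$ acts by the fixed-point-free translation $x_m\mapsto x_m+\tfrac{\alpha_m}2$, so $\mathrm{Fix}\,g=\varnothing$ unless $S$ contains no two consecutive indices; in that case the generators in $S$ act on pairwise disjoint coordinate pairs, whence $\mathrm{Fix}\,g=\bigcap_{i\in S}\mathrm{Fix}\,\tau_i$. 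Therefore $\bigcup_{g\ne e}\mathrm{Fix}\,g=\bigcup_i\mathrm{Fix}\,\tau_i$, whose image is $\bigcup_i J_i$, giving (3).

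For (4), the reflection $\rho_k\colon x_k\mapsto-x_k$ is an isometry of $R$ (it fixes every $f_l$ and preserves each $dx_i^2$), and a direct check gives $\rho_k\tau_i\rho_k=\tau_i$ for all $i$, so $\rho_k$ descends to an isometric involution $\theta_k$ of $M$; since $\sigma_{k+1}\colon x_{k+1}\mapsto\tfrac{\alpha_{k+1}}2-x_{k+1}$ differs from $\rho_k$ by $\tau_k\in G$, it induces the same $\theta_k$. Thus both sheets $\{f_k=a_k\}=\pi(\mathrm{Fix}\,\rho_k)$ and $\{f_{k+1}=a_k\}=\pi(\mathrm{Fix}\,\sigma_{k+1})$ lie in $\mathrm{Fix}\,\theta_k$, and in the local model above they are $\{z_2=0,\,z_1\le0\}$ and $\{z_2=0,\,z_1\ge0\}$, gluing to the smooth hypersurface $\{z_2=0\}$; hence $N_k$ is a codimension-one component of the fixed-point set of an isometric involution and so is totally geodesic (the cases $k=0,n$ being the single sheets $\{f_1=a_0\}$ and $\{f_n=a_n\}$). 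Finally, for (5) the inclusion $J_k\subset\{f_k=a_k\}\subset N_k$ is immediate, and I would obtain the diffeomorphism type by restricting $\pi$ to $J_k$: freezing $x_k\in\{0,\tfrac{\alpha_k}2\}$ and $x_{k+1}\in\{\tfrac{\alpha_{k+1}}4,\tfrac{3\alpha_{k+1}}4\}$ leaves free tori $T^{k-1}=(x_1,\dots,x_{k-1})$ and $T^{n-k-1}=(x_{k+2},\dots,x_n)$, $\tau_k$ acts trivially, $\tau_1,\dots,\tau_{k-1}$ act only on the upper block together with the frozen $x_k$, and $\tau_{k+1},\dots,\tau_{n-1}$ only on the lower block together with the frozen $x_{k+1}$. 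These two systems are independent, and each is an exact copy, in dimensions $k-1$ and $n-k-1$, of the circles-modulo-involutions construction that produced $M$ from $R$; comparing with the model case as in \cite{Ki2} identifies the quotients with $S^{k-1}$ and $S^{n-k-1}$, so $J_k\cong S^{k-1}\times S^{n-k-1}$.
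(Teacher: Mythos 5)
Your route is genuinely different from the paper's: the paper disposes of (1), (2) and (5) by citing \cite[pp.52--56, p.73]{Ki2} and of (4) by one sentence about the fixed-point set of $(x_1,\dots,x_n)\mapsto(x_1,\dots,-x_k,\dots,x_n)$, whereas you rebuild everything from the explicit diagonal form of $F_j$ and the combinatorics of $G$. Your parts (3) and (4) are complete and in fact more careful than the paper's one-liners: the computation of $\mathrm{Fix}(\prod_{i\in S}\tau_i)$ (empty when $S$ contains consecutive indices, an intersection otherwise) is exactly what makes (3) ``obvious,'' and your observation that $\rho_k$ and $\sigma_{k+1}$ differ by $\tau_k\in G$, so that the two sheets $\{f_k=a_k\}$ and $\{f_{k+1}=a_k\}$ glue along $J_k$ into the single smooth hypersurface $\{z_2=0\}$ of the local model, is a point the paper's phrasing of (4) silently assumes. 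The generic-point rank count for (1) and (2) (coefficient $c_i=0$ iff $f_l=a_j$ for some $l\ne i$, forcing $l\in\{j,j+1\}$) is also correct.

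There is, however, a genuine gap, and it sits precisely where the paper leans on \cite{Ki2}: you never actually evaluate $(F_k)_p$ on the branch locus. You set up the right chart and the right pullback ($\xi_m=2y_1\eta_1+2y_2\eta_2$, $\xi_{m+1}=-2y_2\eta_1+2y_1\eta_2$), but then write ``the expected outcome is \dots'' and declare the cancellation check to be the technical heart without performing it. Since your generic-point analysis shows $(F_k)_p$ has rank $\ge 1$ off $\bigcup_m J_m$, the entire content of statement (1) --- and the verification of (2) at branch points --- lives in exactly this deferred computation. Moreover, the outcome you announce does not cover all the cases you need: besides ``$(F_k)_p=0$ when $k=m$'' and ``rank $\ge 2$ when $k\ne m$ and $p\notin N_k$,'' you must also show that at a point $p\in J_m\cap(N_k\setminus J_k)$ with $m\ne k$ the rank is exactly one and in particular $(F_k)_p\ne 0$; otherwise both the equality in (1) and the inclusion $\{\mathrm{rank}\le 1\}\subset N_k$ in (2) remain unproved at such points. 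The apparent poles $1/(f_l-f_i)$ for $\{l,i\}=\{m,m+1\}$ do cancel against the vanishing of $\xi_m,\xi_{m+1}$ at $y=0$, and the resulting quadratic form in $(\eta_1,\eta_2,\dots)$ can be computed explicitly, but until that is written out (or the reference to \cite[pp.52--56]{Ki2} is invoked, as the paper does), parts (1) and (2) of your proof are incomplete.
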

\begin{proof} For (1) and (2), see \cite[pp.52--56]{Ki2}.
(3) is obvious.
(4) follows from the fact that $N_k$ is the fixed point-set
of the involutive isometry $(x_1,\dots,x_n)\mapsto (x_1,\dots
,-x_k,\dots,x_n)$. (5) is easily seen by comparing the branched covering with the standard one, \cite[p.73]{Ki2}.
\end{proof}
\section{Geodesic equations}
The geodesic equations are generally written as
\begin{equation*}
\frac{dx_i}{dt}=\frac{\partial E}{\partial \xi_i},
\quad \frac{d\xi_i}{dt}=-\frac{\partial E}
{\partial x_i}.
\end{equation*}
But, since our geodesic flow is completely integrable, it is better to consider the equation
of geodesics with $F_j=c_j$ $(1\le j\le n-1)$ 
and $2E=1$. If $c=(c_1,\dots,c_{n-1},1)$ is a
regular value of the map
\begin{equation*}
\boldsymbol F=(F_1,\dots, F_{n-1},2E): T^*M\to \R^n\ ,
\end{equation*}
then its inverse image is a disjoint union of tori, and
the vector fields $X_{F_j}$, $X_E$ on it are
mutually commutative and linearly independent
everywhere. Here $X_f$ denotes the hamiltonian vector field
determined by a function $f$; 
\begin{equation*}
X_f=\sum_i\left(\frac{\partial f}{\partial \xi_i}
\frac{\partial }{\partial x_i}-
\frac{\partial f}{\partial x_i}
\frac{\partial }{\partial \xi_i}\right)\ .
\end{equation*}

Let $\omega_j$ $(1\le j\le n)$ be
the dual $1$-forms of $\{\pi_*X_{F_j}\}$, where
$\pi:T^*M\to M$ is the bundle projection. Then, by 
(\ref{integrals}) we have
\begin{equation*}
\omega_l=\sum_i\frac{b_{il}}{2\xi_i}\ dx_i\qquad
(1\le l\le n).
\end{equation*}
They are closed $1$-forms, and the geodesic
orbits are determined by
\begin{equation}\label{eq:geod0}
\omega_l=0 \qquad (1\le l\le n-1),
\end{equation}
and the length parameter $t$ on an orbit
is given by
\begin{equation}\label{eq:length}
dt=2\omega_n.
\end{equation}

Putting
\begin{equation*}
\Theta(\lambda)=\sum_{j=1}^{n-1}\left(
\prod_{\substack{1\le k\le n-1\\ k\ne j}}
(\lambda-a_k)\right)\,c_j-\prod_{k=1}^{n-1}(\lambda-a_k)\ ,
\end{equation*}
we have from (\ref{integrals})
\begin{equation*}
\xi_i=\epsilon_i\sqrt{\sum_j b_{ij}(x_i)c_j}
=\epsilon_i\sqrt{(-1)^i\Theta(f_i(x_i))}\qquad (1\le i\le n),
\end{equation*}
where $\epsilon_i=\text{sgn}\,\xi_i=
\text{sgn}\,\left(\frac{dx_i}{dt}\right)=\pm 1$.
If a covector $(x,\xi)$ with $F_1=c_1,\dots,
F_{n-1}=c_{n-1}$, $2E=1$ satisfies
$\xi_i\ne 0$
for any $1\le i\le n$, then we have
\begin{equation*}
(-1)^i \Theta(f_i(x_i))>0.
\end{equation*}
Therefore for such $c_1,\dots,c_{n-1}$, the equation
$\Theta(\lambda)=0$ has $n-1$ distinct real roots
$b_1>b_2>\dots>b_{n-1}$, and they satisfy
\begin{equation*}
f_1(x_1)>b_1>f_2(x_2)>b_2>\dots>f_{n-1}(x_{n-1})>b_{n-1}>f_n(x_n).
\end{equation*}
Thus we have the identity
\begin{equation*}
\Theta(\lambda)=-\prod_{l=1}^{n-1} (\lambda-b_l),
\end{equation*}
and $c_j$ are expressed by $b_l$'s as
\begin{equation}\label{b-c}
c_j=\frac{-\prod_{l=1}^{n-1}(a_j-b_l)}
{\prod_{\substack{1\le k\le n-1\\ k\ne j}}
(a_j-a_k)}
\qquad (1\le j\le n-1)\ .
\end{equation}

Conversely, let $b_1,\dots,b_{n-1}$ be any real numbers satisfying
\begin{equation}\label{brange}
a_{i+1}\le b_i\le a_{i-1}\ ,\qquad b_{i+1}\le b_i
\end{equation}
for any $i$, and define $c_1,\dots,c_{n-1}$ by (\ref{b-c}).
Then there is a covector $(x,\xi)$ with $F_1=c_1$, $\dots$, 
$F_{n-1}=c_{n-1}$, $2E=1$. It can be verified that if
$b_1$, $\dots$, $b_{n-1}$ satisfy 
\begin{equation}\label{b-cond}
a_{i+1}< b_i< a_{i-1}\ ,\quad b_i\ne a_i,\quad 
b_{i+1}< b_i\qquad \text{for any }i
\end{equation}
then the corresponding $c=(c_1,\dots,c_{n-1},1)$ is a regular
value of $\boldsymbol F$.

To describe the behavior of the geodesics it is more convenient to
use the values $(b_1,\dots,b_{n-1})$ rather than using
$(c_1,\dots,c_{n-1})$ directly. So, we shall mainly use 
$(b_1,\dots,b_{n-1})$ as the values of first integrals which determine
the Lagrange tori $\boldsymbol F^{-1}(c)$.
Also, we shall denote by $H_1,\dots,H_{n-1}$ the functions on
the unit cotangent bundle $U^*M$ whose values are $b_1,\dots,b_{n-1}$. Namely, $H_i$'s are determined by
\begin{gather*}
F_j(\mu)=\frac{-\prod_{l=1}^{n-1}(a_j-H_l(\mu))}
{\prod_{\substack{1\le k\le n-1\\ k\ne j}}
(a_j-a_k)}
\qquad (1\le j\le n-1),\\
H_1(\mu)\ge\dots\ge H_{n-1}(\mu),\qquad \mu\in U^*M.
\end{gather*}
The range of $H_i$ are given by (\ref{brange}).

Now, put
\begin{align*}
&a_i^+=\max\{a_i,b_i\}\quad (1\le i\le n-1),
\quad a_n^+=a_n\\
&a_i^-=\min\{a_i,b_i\}\quad (1\le i\le n-1),
\quad a_0^-=a_0\ .
\end{align*}
If $b_1,\dots,b_{n-1}$ satisfy the condition (\ref{b-cond}), then
the $\pi$-image of a connected component of
$\boldsymbol F^{-1}(c)$ (a Lagrange torus) is of 
the form
\begin{equation*}
L_1\times\dots \times L_n\subset M\ ,
\end{equation*}
where each $L_i$ is a connected component of the inverse
image of $[a_i^+,a_{i-1}^-]$ by the map
\begin{equation*}
f_i:\R/\alpha_i\Z\to [a_i,a_{i-1}]\ .
\end{equation*}
(Observe that the ``generalized band'' $L_1\times\dots\times 
L_n\subset R$ is injectively mapped to $M$
by the branched covering $R\to M$.)

Along a geodesic $(x_1(t),\dots,x_n(t))$, the coordinate function $x_i(t)$ oscillates on $L_i$ if $L_i$ is an interval,
or $x_i(t)$ moves monotonously if $L_i$ is the whole circle.
Also, the function $f_i(x_i(t))$ oscillates on the interval
$[a_i^+,a_{i-1}^-]$

After all, the equations of geodesic orbits
\begin{equation*}
\omega_l=0\qquad (1\le l\le n-1)
\end{equation*}
are described as
\begin{equation*}
\sum_{i=1}^n\frac{\epsilon_i(-1)^i\prod_{\substack{1\le k\le n-1\\ k\ne l}}(f_i(x_i)-a_k)\ dx_i}
{\sqrt{(-1)^{i-1}\prod_{k=1}^{n-1}(f_i(x_i)-b_k)}}
=0\qquad (1\le l\le n-1)\ .
\end{equation*}
Note that this system of equations is equivalent to
\begin{equation*}
\sum_{i=1}^n\frac{\epsilon_i(-1)^iG(f_i)\ dx_i}
{\sqrt{(-1)^{i-1}\prod_{k=1}^{n-1}(f_i-b_k)}}
=0
\end{equation*}
for any polynomial $G(\lambda)$ of degree $\le n-2$.
Since
\begin{equation*}
\left(\frac{df_i}{dx_i}\right)^2=\frac{(-1)^i4
\prod_{k=0}^n(f_i-a_k)}{A(f_i)^2},
\end{equation*}
those equations are also described as
\begin{equation}\label{eq:geod}
\sum_{i=1}^n\frac{\epsilon'_i(-1)^iG(f_i)A(f_i)\ df_i}
{\sqrt{-\prod_{k=1}^{n-1}(f_i-b_k)
\cdot\prod_{k=0}^n(f_i-a_k)}}
=0,
\end{equation}
where $\epsilon_i'=\text{sgn of }df_i(x_i(t))/dt$.

By (\ref{eq:geod}) we have
\begin{equation*}
\sum_{i=1}^n\int_s^{t}\frac{(-1)^iG(f_i)A(f_i)}
{\sqrt{-\prod_{k=1}^{n-1}(f_i-b_k)
\cdot\prod_{k=0}^n(f_i-a_k)}}\ 
\left|\frac{df_i(x_i(t))}{dt}\right|
\ dt=0
\end{equation*}
for any polynomial $G(\lambda)$ of degree $\le n-2$ and for
a fixed $s\in \R$.
By using the variables $\sigma_i$ defined by
\begin{equation*}
\sigma_{i}(t)=\int_0^t\left|\frac{df_i(x_i(t))}{dt}\right|
\ dt\ ,
\end{equation*}
this formula
is rewritten as
\begin{equation}\label{eq:geodsigma1}
\sum_{i=1}^n\int_{\sigma_i(s)}^{\sigma_i(t)}\frac{(-1)^iG(f_i)A(f_i)}
{\sqrt{-\prod_{k=1}^{n-1}(f_i-b_k)
\cdot\prod_{k=0}^n(f_i-a_k)}}\ 
\ d\sigma_i=0\ .
\end{equation}
Here, $f_i$ is regarded as a function of $\sigma_i$, i.e.,
putting $\phi_i(t)
=a_i+|t|$ for $|t|\le a_{i-1}-a_i$ and extending it to $\R$
as a periodic function with the period $2(a_{i-1}-a_i)$, 
we have
\begin{equation*}
f_i=
\phi_i(\sigma_i+\epsilon_i(f_i(x_i(0))-a_i))\ ,
\end{equation*}
where $\epsilon_i=\pm 1$ is the sign of $df_i(x_i(t))/dt$ at 
$t=0$.
Also, integrating $dt=\sum_i(b_{in}/\xi_i)dx_i$, we have 
\begin{equation}\label{eq:geodlength}
\sum_{i=1}^n\int_{\sigma_i(s)}^{\sigma_i(t)}
\frac{(-1)^i\,\tilde G(f_i)A(f_i)}
{2\sqrt{-\prod_{k=1}^{n-1}(f_i-b_k)
\cdot\prod_{k=0}^n(f_i-a_k)}}\ 
d\sigma_i=t-s\ ,
\end{equation}
where $\tilde G(\lambda)$ is any monic polynomial in 
$\lambda$ of degree $n-1$.
\section{A monotonicity condition for $A(\lambda)$}
We put the following conditions on the function
$A(\lambda)$:
\begin{equation}\label{cond2}
(-1)^{k-1}A^{(k)}(\lambda)>0\quad \text{on }
[a_n,a_0]\qquad (1\le k\le n-1)
\end{equation}
for $n\ge 3$, where $A^{(k)}$ denotes the $k$-th derivative of $A$.
For the case $n=\dim M=2$, we need (\ref{cond2}) for $1\le k\le 2$, 
as described in our earlier paper \cite{IK2}.
A typical example satisfying the condition 
(\ref{cond2}) is the ellipsoid,
in which case $A(\lambda)=\sqrt{\lambda}$.
Since the condition (\ref{cond2}) is $C^{n-1}$-open,
there are surely many $A(\lambda)$ satisfying it.

In the rest of this section, we shall prove some 
inequalities which are obtained under the condition
(\ref{cond2}).
Put
\begin{equation*}
G_l(\lambda)=\prod_{\substack{1\le k\le n-1\\
k\ne l}}(\lambda-b_k)\qquad (1\le l\le n-1)\ .
\end{equation*}

\begin{prop}\label{prop:cond2}
If $A(\lambda)$ satisfies the condition {\rm(\ref{cond2})},
and if $b_1,\dots,b_{n-1}$ and $a_0,\dots,a_n$ are all
distinct, then the following inequalities hold:
\begin{enumerate}
\item \begin{equation*}
\sum_{i=1}^n\int_{a_i^+}^{a_{i-1}^-}
\frac{(-1)^{n-i+\# I}A(\lambda)\prod_{j\in I}(\lambda-b_j)
}
{\sqrt{-\prod_{k=1}^{n-1}(\lambda-b_k)
\cdot\prod_{k=0}^n(\lambda-a_k)}}\ 
d\lambda<0,
\end{equation*}
where $I$ is any (possibly empty)
subset of
$\{1,\dots, n-1\}$ such that $\# I\le n-2$;
\item \begin{equation*}
\frac{\partial }{\partial b_l}
\sum_{i=1}^n\int_{a_i^+}^{a_{i-1}^-}
\frac{(-1)^{i}G_l(\lambda)A(\lambda)\ d\lambda}
{\sqrt{-\prod_{k=1}^{n-1}(\lambda-b_k)
\cdot\prod_{k=0}^n(\lambda-a_k)}}
>0\ ,
\end{equation*}
where $1\le l\le n-1$.
\end{enumerate}
The inequality (1) is still valid if
$b_j$'s $(j\not\in I)$ are mutually distinct.
Precisely speaking, when a sequence of $b_j$'s with 
$b_j$'s and $a_k$'s being all distinct converges to
some $b_j$'s which satisfy $b_k\ne b_l$ for any $k,l
\in J$, $k\ne l$, then the formula in (1) has a limit
and the limit is still negative.
\end{prop}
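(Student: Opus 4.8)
The plan is to prove (1) by reducing it, through Newton's divided‑difference interpolation, to a single integral whose sign is manifest, and to reduce (2) to the same circle of ideas after legitimizing the $b_l$‑derivative. The one algebraic input I will need is the vanishing identity: for every polynomial $Q$ with $\deg Q\le n-2$,
\begin{equation*}
\sum_{i=1}^n(-1)^{n-i}\int_{a_i^+}^{a_{i-1}^-}\frac{Q(\lambda)\,d\lambda}{\sqrt{-\prod_{k=1}^{n-1}(\lambda-b_k)\prod_{k=0}^n(\lambda-a_k)}}=0 .
\end{equation*}
I would obtain this by a residue argument. The $2n$ numbers $\{a_k\}\cup\{b_k\}$ are the branch points of $w^2=-\prod_k(\lambda-b_k)\prod_k(\lambda-a_k)$; the intervals $[a_i^+,a_{i-1}^-]$ are precisely the bands on which this polynomial is positive, and fixing a single‑valued branch of $1/w$ on the cut plane produces the alternating sign $(-1)^{n-i}$ on successive bands. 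The integral of $Q\,d\lambda/w$ over a large circle is $O(\lambda^{-2})$, hence vanishes once $\deg Q\le n-2$; deforming that circle onto the bands yields (twice) the left‑hand side, giving the identity.

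Granting this, here is the argument for (1). Write $s=\#I$ and abbreviate the square root by $\sqrt R$, so $R>0$ on each band (since $\text{sgn}\prod_k(\lambda-a_k)=(-1)^i$ and $\text{sgn}\prod_k(\lambda-b_k)=(-1)^{i-1}$ there). Put $I^c=\{1,\dots,n-1\}\setminus I$ and choose one interpolation node $\eta_j$ in each gap $(a_j^-,a_j^+)$, $j\in I^c$; there are exactly $n-1-s$ such gaps, hence $n-1-s$ nodes. Newton interpolation gives $A(\lambda)=P_0(\lambda)+h(\lambda)\prod_{j\in I^c}(\lambda-\eta_j)$ with $\deg P_0\le n-2-s$ and $h(\lambda)=A[\eta_\bullet,\lambda]$ the divided difference of order $n-1-s$. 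Since $\deg\!\big(P_0\prod_{j\in I}(\lambda-b_j)\big)\le n-2$, the vanishing identity kills the $P_0$‑part, so the left‑hand side of (1) equals
\begin{equation*}
\sum_{i=1}^n\int_{a_i^+}^{a_{i-1}^-}h(\lambda)\,\frac{(-1)^{n-i+s}\,W(\lambda)}{\sqrt R}\,d\lambda,\qquad W(\lambda)=\prod_{j\in I^c}(\lambda-\eta_j)\prod_{j\in I}(\lambda-b_j) .
\end{equation*}
Two constant‑sign facts now finish it. First, on band $i$ one has $\text{sgn}\prod_{j\in I}(\lambda-b_j)=(-1)^{\#(I\cap\{1,\dots,i-1\})}$ and $\text{sgn}\prod_{j\in I^c}(\lambda-\eta_j)=(-1)^{\#(I^c\cap\{1,\dots,i-1\})}$ (a node in gap $j$ lies above band $i$ iff $j\le i-1$), so $\text{sgn}\,W=(-1)^{i-1}$ and the weight $(-1)^{n-i+s}W/\sqrt R$ has the band‑independent sign $(-1)^{n-1+s}$. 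Second, condition~(\ref{cond2}) at order $n-1-s$ forces $h$ to have the constant sign $(-1)^{n-s}$ on $[a_n,a_0]$. Hence every summand, and so the sum, has sign $(-1)^{n-1+s}(-1)^{n-s}=(-1)^{2n-1}=-1$, which is (1). The limiting assertion follows from the same construction: only the gaps indexed by $I^c$ are used, so if the $b_j$ with $j\notin I$ stay mutually distinct (and distinct from the $a_k$) the node placement survives, and dominated convergence—the integrand singularities remaining integrable—preserves the sign in the limit.

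For (2) I would run the same scheme after making the $b_l$‑derivative legitimate. Writing $R=(\lambda-b_l)D(\lambda)$ with $D$ free of $b_l$, the only band endpoint depending on $b_l$ is the one equal to $b_l$ (either $a_l^+$ or $a_l^-$, according to the sign of $b_l-a_l$); there the substitution $\lambda-b_l=u^2$ turns $G_l\,d\lambda/\sqrt R$ into the smooth form $2G_l\,du/\sqrt D$, so differentiation under the integral is justified and the divergent‑looking $\partial_{b_l}$ of the integrand combines with the moving‑endpoint term into a finite quantity. After this reduction I expect $\partial_{b_l}$ of the sum to be again of the form $\sum_i(\pm)\int_{a_i^+}^{a_{i-1}^-}A\,\Pi/\sqrt R$ (in the finite‑part sense), or, after one integration by parts, of the same form with $A$ replaced by $A'$; either way the vanishing identity together with the gap‑node/divided‑difference argument of (1)—now invoking condition~(\ref{cond2}) at the appropriate one‑higher order—delivers the sign $>0$. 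The main obstacle is exactly this step: carrying the branch‑point differentiation through cleanly and matching the residual signs so that the positive case of the divided‑difference argument applies; the contour identity and the two sign bookkeeping computations are routine by comparison.
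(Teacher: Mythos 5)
Your part (1) is correct and is essentially the paper's own argument in different clothing: the vanishing identity for $\deg Q\le n-2$ is exactly Lemma~\ref{lemma:flat} (proved there by the same contour/residue computation), and your Newton remainder $h(\lambda)=A[\eta_\bullet,\lambda]$ is, up to the choice of nodes, the paper's function $B(\lambda)$ from Lemma~\ref{lemma:cond} (which takes the nodes to be the $b_j$, $j\in J=I^c$, rather than interior points of the gaps, and establishes the sign of the divided difference by an explicit integral-representation induction instead of quoting the mean value theorem for divided differences). Your sign bookkeeping checks out. One small caveat on the limit assertion: the hypothesis only forces $b_k\ne b_l$ for $k,l\in J$, so a gap $(a_j^-,a_j^+)$ with $j\in I^c$ may collapse in the limit (if $b_j\to a_j$), and then ``the node placement survives'' fails as stated; the fix is to place the nodes at the $b_j$ themselves (i.e., use the paper's $B$), which only requires the $b_j$, $j\in J$, to stay mutually distinct.

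Part (2), however, is a plan rather than a proof, and you say so yourself; this is a genuine gap. The difficulty is not only legitimizing $\partial/\partial b_l$ at the moving branch point but identifying what survives and why it has a sign. The paper's resolution is to first split off the pole: writing $A(\lambda)/(\lambda-b_l)=A(b_l)/(\lambda-b_l)+B(\lambda,b_l)$, the term $A(b_l)G_l(\lambda)$ contributes $0$ identically in $b$ by the vanishing lemma, so the quantity to differentiate is $\sum_i\int (-1)^i B(\lambda,b_l)\prod_{j}(\lambda-b_j)/\sqrt{R}$, whose integrand now vanishes like $\sqrt{|\lambda-b_l|}$ at the moving endpoint, so one may differentiate under the integral with no boundary term. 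The derivative produces two pieces: $\partial_{b_l}B(\lambda,b_l)=B_1(\lambda,b_l)=A[b_l,b_l,\lambda]$ times the old weight, and $-\tfrac12 B(\lambda,b_l)\prod_{j\ne l}(\lambda-b_j)/\sqrt R$ from differentiating the weight; the latter, after subtracting the degree-$(n-2)$ polynomial $A'(b_l)G_l(\lambda)$ (again killed by the vanishing lemma), becomes $-\tfrac12 B_1(\lambda,b_l)\prod_j(\lambda-b_j)/\sqrt R$. Both pieces are then controlled by the single fact $B_1=A''(\xi)/2<0$, i.e.\ condition~(\ref{cond2}) at order $2$ --- not, as your sketch suggests, by ``replacing $A$ by $A'$'' after an integration by parts, nor by the order-$(n-s)$ divided-difference argument of part (1). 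Without this cancellation of the two degree-$(n-2)$ polynomial pieces and the identification of the common factor $B_1$, the sign of the derivative is not determined, so as written your part (2) does not go through.
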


In the following two lemmas, we shall assume that
$b_1,\dots,b_{n-1}$ and $a_0,\dots,a_n$ are all distinct.
\begin{lemma}\label{lemma:flat}
\begin{equation*}
\sum_{i=1}^n\int_{a_i^+}^{a_{i-1}^-}
\frac{(-1)^{i}G(\lambda)\ d\lambda}
{\sqrt{-\prod_{k=1}^{n-1}(\lambda-b_k)
\cdot\prod_{k=0}^n(\lambda-a_k)}}
=0
\end{equation*}
for any polynomial $G(\lambda)$ of degree $\le n-2$.
\end{lemma}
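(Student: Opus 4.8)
The plan is to read the left-hand side as a sum of contour integrals of the single rational differential
\[
\eta=\frac{G(\lambda)\,d\lambda}{\sqrt{R(\lambda)}},\qquad
R(\lambda)=-\prod_{k=1}^{n-1}(\lambda-b_k)\prod_{k=0}^{n}(\lambda-a_k),
\]
taken around the branch cuts of $\sqrt{R}$, and then to evaluate that sum by pushing the contour out to infinity. First I would order the $2n$ branch points of $R$. Setting $a_i^{+}=\max\{a_i,b_i\}$ and $a_i^{-}=\min\{a_i,b_i\}$ as in the text, I would use the interlacing relations (\ref{b-cond}) and (\ref{brange}) to show that the $2n$ numbers arrange themselves as
\[
a_0>a_1^{+}>a_1^{-}>a_2^{+}>\dots>a_{n-1}^{+}>a_{n-1}^{-}>a_n,
\]
the one nonobvious step being $a_i^{-}>a_{i+1}^{+}$, which reduces to the four comparisons $a_i>a_{i+1}$, $a_i>b_{i+1}$, $b_i>a_{i+1}$, $b_i>b_{i+1}$, all contained in (\ref{b-cond}) and (\ref{brange}). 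Since $R$ has degree $2n$ with leading coefficient $-1$, it is negative for $\lambda>a_0$ and changes sign at each simple branch point; hence $R>0$ exactly on the $n$ ``bands'' $[a_i^{+},a_{i-1}^{-}]$ $(1\le i\le n)$, which are precisely the integration intervals of the lemma, while $R<0$ on the intervening gaps.

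Next I would fix the single-valued branch of $\sqrt{R}$ on $\C$ cut along those $n$ bands and track its boundary value from the upper half-plane. Writing $\arg R=\pi+\sum_m\arg(\lambda-e_m)$ over the branch points $e_m$, and observing that each branch point crossed (as $\lambda$ moves leftward just above the real axis) increases $\arg R$ by $\pi$, I would obtain that the upper boundary value on the $i$-th band is $(-1)^i\sqrt{R}_{\mathrm{pos}}$, where $\sqrt{R}_{\mathrm{pos}}>0$ denotes the positive real root $\sqrt{-\prod_k(\lambda-b_k)\prod_k(\lambda-a_k)}$ occurring in the lemma. Since the two sides of a cut carry opposite signs of $\sqrt R$, a small counterclockwise loop $\gamma_i$ around the $i$-th cut then contributes
\[
\oint_{\gamma_i}\eta=-2\int_{a_i^{+}}^{a_{i-1}^{-}}
\frac{(-1)^i G(\lambda)}{\sqrt{-\prod_k(\lambda-b_k)\prod_k(\lambda-a_k)}}\,d\lambda,
\]
so that the alternating factor $(-1)^i$ of the lemma appears exactly from this branch bookkeeping.

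Finally I would push the contour to infinity. Because $\deg G\le n-2$ while $\sqrt R\sim \mathrm{const}\cdot\lambda^{n}$, the differential $\eta$ is $O(\lambda^{-2})\,d\lambda$ near $\infty$; and since $2n$ is even, $\sqrt R$ is single-valued outside a large circle, so $\eta$ extends holomorphically across $\infty$ with vanishing residue there. Hence $\oint_{|\lambda|=\mathrm{large}}\eta=0$, and deforming this circle onto the cuts gives $\sum_i\oint_{\gamma_i}\eta=0$; dividing out the common factor $-2$ yields exactly the asserted identity. The main obstacle is the middle step, namely the combined combinatorial and branch bookkeeping: one must verify both the decreasing order of the $2n$ branch points and that the chosen branch of $\sqrt R$ produces precisely the sign $(-1)^i$ on consecutive bands. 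Once these are in place, the decay estimate at infinity makes the vanishing immediate, and the restriction $\deg G\le n-2$ is seen to be exactly the condition guaranteeing no residue at infinity.
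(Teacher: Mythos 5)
Your proposal is correct and is essentially the paper's own argument: the paper also works on $\C\cup\{\infty\}$ minus the $n$ cuts $[a_i^+,a_{i-1}^-]$, takes the single-valued branch $\mu$ of $\sqrt{R}$ there, and sums the contour integrals of $(G(\lambda)/\mu)\,d\lambda$ around the cuts, using $\deg G\le n-2$ to kill the contribution at infinity. You have merely supplied the branch-point ordering and sign bookkeeping that the paper leaves implicit.
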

\begin{proof}
Let $W=\{\lambda\}$ be the region $\C\cup\{\infty\}-\bigcup_{
i=1}^n [a_i^+,a_{i-1}^-]$. Then there are a 
meromorphic function $\mu$ on $W$ such that
\begin{equation*}
 \mu^2=-\prod_{k=1}^{n-1}(\lambda-b_k)
\cdot\prod_{k=0}^n(\lambda-a_k),
\end{equation*}
and the holomorphic
$1$-form $(G(\lambda)/\mu)d\lambda$ on $W$.
Taking the sum of contour integrals around the
intervals $[a_i^+,a_{i-1}^-]$, one obtains the
desired formula.
\end{proof}
\begin{lemma}\label{lemma:cond}
Let $J$ be any nonempty subset of $\{1,\dots,n-1\}$, and
let $B(\lambda)$ be the function defined by
\begin{equation}\label{eq:B}
\frac{A(\lambda)}{\prod_{k\in J}(\lambda-b_k)}=
\sum_{k\in J} \frac{e_k}{\lambda-b_k}+B(\lambda),
\quad e_k=\frac{A(b_k)}{\prod_{\substack{l\in J\\
l\ne k}}(b_k-b_l)}.
\end{equation}
Suppose $A(\lambda)$ satisfies the condition (\ref{cond2}).
Then $B(\lambda)$ satisfies
\begin{equation*}
(-1)^{\# J +m}B^{(m)}(\lambda)<0\quad \text{for}
\quad a_n\le \lambda\le a_0 \quad\text{and}
\quad 0\le m\le n-1-\# J.
\end{equation*}
\end{lemma}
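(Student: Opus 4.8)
The plan is to recognize $B(\lambda)$ as a divided difference of $A$ and then transport the sign hypothesis (\ref{cond2}) through an integral representation. Write $J=\{k_1,\dots,k_r\}$ with $r=\#J$, and let $P$ be the interpolation polynomial of degree $\le r-1$ agreeing with $A$ at the nodes $b_{k_1},\dots,b_{k_r}$. The Newton remainder formula reads
\[
A(\lambda)=P(\lambda)+A[b_{k_1},\dots,b_{k_r},\lambda]\prod_{k\in J}(\lambda-b_k),
\]
where $A[\,\cdot\,]$ denotes the divided difference. Dividing by $\prod_{k\in J}(\lambda-b_k)$, the term $P(\lambda)/\prod_{k\in J}(\lambda-b_k)$ has no polynomial part (since $\deg P<r$) and only simple poles, with residue at $b_k$ equal to $P(b_k)/\prod_{l\ne k}(b_k-b_l)=A(b_k)/\prod_{l\ne k}(b_k-b_l)=e_k$. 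Comparing with the defining relation (\ref{eq:B}) then identifies
\[
B(\lambda)=A[b_{k_1},\dots,b_{k_r},\lambda].
\]

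Next I would apply the Hermite--Genocchi representation of this $r$-th order divided difference over the $r+1$ nodes $b_{k_1},\dots,b_{k_r},\lambda$:
\[
B(\lambda)=\int_{\Sigma_r}A^{(r)}\!\Big(t_0\lambda+\sum_{i=1}^r t_i b_{k_i}\Big)\,d\sigma,
\]
where $\Sigma_r=\{(t_0,\dots,t_r)\mid t_i\ge0,\ \sum_i t_i=1\}$ carries the standard positive measure $d\sigma$. Since $\lambda$ enters the argument only through the coefficient $t_0$, differentiating $m$ times under the integral sign gives
\[
B^{(m)}(\lambda)=\int_{\Sigma_r}t_0^{\,m}\,A^{(r+m)}\!\Big(t_0\lambda+\sum_{i=1}^r t_i b_{k_i}\Big)\,d\sigma.
\]

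The sign then follows at once. For $0\le m\le n-1-r$ the order $r+m$ lies in the range $1\le r+m\le n-1$, so (\ref{cond2}) gives $\text{sgn}\,A^{(r+m)}=(-1)^{r+m-1}$ on $[a_n,a_0]$. Each $b_{k_i}\in[a_{k_i+1},a_{k_i-1}]\subseteq[a_n,a_0]$ by (\ref{brange}) and $\lambda\in[a_n,a_0]$, so every convex combination $t_0\lambda+\sum_i t_i b_{k_i}$ lies in $[a_n,a_0]$; hence the integrand has constant sign $(-1)^{r+m-1}$, the weight $t_0^m\ge0$ being positive on a set of full measure. Therefore $\text{sgn}\,B^{(m)}(\lambda)=(-1)^{r+m-1}$, so that $(-1)^{\#J+m}B^{(m)}(\lambda)=(-1)^{r+m}B^{(m)}(\lambda)<0$, which is the assertion.

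The only genuine obstacle is the clean identification of $B$ with the divided difference together with the legitimacy of differentiating under the integral; the former is the standard partial-fraction computation above, and the latter is justified by the smoothness of $A$ and compactness of $\Sigma_r$. If one prefers to avoid quoting Hermite--Genocchi, the same conclusion is reached by induction on $\#J$ via the divided-difference recursion $B_J(\lambda)=\bigl(B_{J\setminus\{k_1\}}(\lambda)-B_{J\setminus\{k_1\}}(b_{k_1})\bigr)/(\lambda-b_{k_1})$, which exhibits $B_J$ as a first divided difference of $B_{J\setminus\{k_1\}}$ and propagates the inductive sign pattern through $B_J^{(m)}(\lambda)=\int_0^1 s^m B_{J\setminus\{k_1\}}^{(m+1)}\bigl((1-s)b_{k_1}+s\lambda\bigr)\,ds$, the base case $\#J=1$ reducing to $B(\lambda)=\int_0^1 A'((1-s)b_{k_1}+s\lambda)\,ds$.
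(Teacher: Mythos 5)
Your proof is correct and is essentially the paper's argument: the paper proves the lemma by exactly the induction you sketch in your closing paragraph, with base case $B(\lambda)=\int_0^1 A'(t(\lambda-b_k)+b_k)\,dt$ for $\#J=1$ and inductive step $B_1(\lambda)=\int_0^1 B'(t(\lambda-b_l)+b_l)\,dt$ after peeling off one pole at a time. Your primary presentation, identifying $B(\lambda)=A[b_{k_1},\dots,b_{k_r},\lambda]$ and invoking Hermite--Genocchi, is just the closed form of that same recursion, so the two routes are not genuinely different.
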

\begin{proof}
We shall prove this by an induction on $\# J$.
When $J=\{k\}$, then
\begin{equation}\label{eq:prime}
B(\lambda)=\frac{A(\lambda)-A(b_k)}{\lambda-b_k}
=\int_0^1A'(t(\lambda-b_k)+b_k)dt,
\end{equation}
and we have $(-1)^{1+m}B^{(m)}(\lambda)<0$ by the 
assumption on $A(\lambda)$.

Now suppose $\# J\ge 1$, $l\not\in J$ and let $J_1=J\cup\{l\}$.
Then
\begin{gather*}
\frac{A(\lambda)}{\prod_{k\in J_1}(\lambda-b_k)}=
\sum_{k\in J} \frac{e_k}{(\lambda-b_k)
(\lambda-b_l)}+\frac{B(\lambda)}{\lambda-b_l}\\
=\sum_{k\in J}\frac1{b_k-b_l}\left(\frac{e_k}
{\lambda-b_k}-\frac{e_k}{\lambda-b_l}\right)+
\frac{B(b_l)}{\lambda-b_l}+
\frac{B(\lambda)-B(b_l)}{\lambda-b_l}.
\end{gather*}
Let us denote the last term in the right-hand side 
by $B_1(\lambda)$. Since it is written as
\begin{equation*}
\int_0^1B'(t(\lambda-b_l)+b_l)dt,
\end{equation*}
we have $(-1)^{\# J +1+m}B_1^{(m)}(\lambda)<0$
by the induction assumption.
\end{proof}

{\it Proof of Proposition \ref{prop:cond2}.}
First, suppose that $b_1,\dots,b_{n-1}$ and $a_0,\dots$, $a_n$ are all distinct.
Let $A(\lambda)$ be a positive function on 
$[a_n,a_0]$ satisfying the condition (\ref{cond2}).
Let $I$ be as in Proposition \ref{prop:cond2} (1) and let $J$ be
its complement in $\{1,\dots,n-1\}$.
Define the function $B(\lambda)$ by the formula
(\ref{eq:B}). Then, by Lemmas \ref{lemma:cond} and
\ref{lemma:flat}
we have
\begin{equation}\label{eq:cond1}
\begin{gathered}
\sum_{i=1}^n\int_{a_i^+}^{a_{i-1}^-}
\frac{(-1)^{n-i+\# I}A(\lambda)\prod_{l\in I}(\lambda-b_l)
}
{\sqrt{-\prod_{k=1}^{n-1}(\lambda-b_k)
\cdot\prod_{k=0}^n(\lambda-a_k)}}\ 
d\lambda\\
=\sum_{i=1}^n\int_{a_i^+}^{a_{i-1}^-}
\frac{(-1)^{n-i+\# I}B(\lambda)\prod_{l=1}^{n-1}
(\lambda-b_l)}
{\sqrt{-\prod_{k=1}^{n-1}(\lambda-b_k)
\cdot\prod_{k=0}^n(\lambda-a_k)}}\ 
d\lambda\ .
\end{gathered}
\end{equation}
Since $(-1)^{i-1}\prod_{j=1}^{n-1}(\lambda-b_j)>0$
on $(a_i^+,a_{i-1}^-)$, and since
\begin{equation*}
(-1)^{n-1-\# I}B(\lambda)<0
\end{equation*}
by Lemma \ref{lemma:cond}, we have the inequality 
(1) in this case.

Next, let us consider the limit case. The limit $b_j$'s are assumed that $b_k\ne b_l$ for any $k,l
\in J$, $k\ne l$. Note that the function $B(\lambda)$
is defined by the formula (\ref{eq:B}) and it only
depends on $A(\lambda)$ and $b_j$'s $(j\in J)$.
Since the limit $b_j$'s $(j\in J)$ are mutually 
distinct, it follows that the function $B(\lambda)$
has a limit. Therefore the right-hand side of the
formula (\ref{eq:cond1}) has a finite limit and it is
still negative by the same reason as above.

To prove (2), we put
\begin{equation*}
\frac{A(\lambda)}{\lambda-b_l}=
\frac{A(b_l)}{\lambda-b_l}+B(\lambda, b_l).
\end{equation*}
Then the left-hand side of (2) is equal to
\begin{equation}\label{eq:cond}
\begin{gathered}
\frac{\partial }{\partial b_l}
\sum_{i=1}^n\int_{a_i^+}^{a_{i-1}^-}
\frac{(-1)^{i}B(\lambda, b_l)\prod_{j=1}^{n-1}(\lambda-b_j)}
{\sqrt{-\prod_{k=1}^{n-1}(\lambda-b_k)
\cdot\prod_{k=0}^n(\lambda-a_k)}}\ 
d\lambda\\
=\sum_{i=1}^n\int_{a_i^+}^{a_{i-1}^-}
\frac{(-1)^{i}\left(\frac{\partial }
{\partial b_l}B(\lambda, b_l)\right)
\prod_{j=1}^{n-1}(\lambda-b_j)}
{\sqrt{-\prod_{k=1}^{n-1}(\lambda-b_k)
\cdot\prod_{k=0}^n(\lambda-a_k)}}\ 
d\lambda\\
-\frac12\sum_{i=1}^n\int_{a_i^+}^{a_{i-1}^-}
\frac{(-1)^{i}B(\lambda, b_l)\prod_{\substack{
1\le j\le n-1\\j\ne l}}(\lambda-b_j)}
{\sqrt{-\prod_{k=1}^{n-1}(\lambda-b_k)
\cdot\prod_{k=0}^n(\lambda-a_k)}}\ 
d\lambda.
\end{gathered}
\end{equation}
The second line of the right-hand side is equal to
\begin{equation*}
-\frac12\sum_{i=1}^n\int_{a_i^+}^{a_{i-1}^-}
\frac{(-1)^{i}B_1(\lambda, b_l)\prod_{
1\le j\le n-1}(\lambda-b_j)}
{\sqrt{-\prod_{k=1}^{n-1}(\lambda-b_k)
\cdot\prod_{k=0}^n(\lambda-a_k)}},
\end{equation*}
where 
\begin{equation*}
B_1(\lambda,b_l)=\frac{B(\lambda,b_l)-A'(b_l)}
{\lambda-b_l}=\frac{\partial}{\partial b_l}B(\lambda,b_l).
\end{equation*}
Since $B_1(\lambda,b_l)<0$, it follows that the right-hand
side of the formula (\ref{eq:cond}) is positive.
\qed

%

\section{Jacobi fields}\label{sec:jac}
In this section we shall consider Jacobi fields 
along a geodesic which is not totally contained
in the submanifold $N_i$ for any $i$.
Let $\gamma(t)=(x_1(t),\dots,x_n(t))$ be such a 
geodesic. In this case, the corresponding 
values $b_i$ of the first integrals
$H_i$ satisfy
$b_i\ne a_{i+1}$ and $b_i\ne a_{i-1}$ for any $i$.
We shall consider the following
three cases separately: (i) $b_1,\dots,b_{n-1}$ and
$a_0,\dots,a_n$ are all distinct;
(ii) there are some $i$ such that $b_i=a_i$, but
other $b_j$'s are not equal to any $a_k$ nor 
$b_k$; (iii) there are some $j$ such that
$b_j=b_{j-1}$, and there may be some $i$ such that 
$b_i=a_i$, but there is no $l$ such that
$b_l=a_{l+1}$ or $b_l=a_{l-1}$.

First, let us consider the case where $b_1,
\dots,b_{n-1}$ and $a_0,\dots,a_n$ are all
distinct.
For each $i$, let $S_i\subset \R$ be the set of the time $s$
such that $f_i(x_i(s))=b_i$ $(b_i=a_i^+)$ or 
$f_{i+1}(x_{i+1}(s))=b_i$ $(b_i=a_i^-)$.
Then $S_i$ are discrete subsets of $\R$. At each point $\gamma(s)$ where $s\not\in S_i$ 
for any $i$, the system of functions $(H_1,\dots,H_{n-1})$ 
can be used
as a coordinate system on the unit cotangent space 
$U^*_{\gamma(s)}M$ around the covector $(x(s),\xi(s))=
\flat(\dot\gamma(s))$. Then, identifying $\partial/\partial
H_i\in T_{\flat(\dot\gamma(s))}(U^*_{\gamma(s)}M)$ with
a covector in $T^*_{\gamma(s)}M$ in a natural manner,
we put $\tilde V_i(s)=\sharp(\frac{\partial}{\partial H_i}
/|\frac{\partial}{\partial H_i}|)\in T_{\gamma(s)}M$ at 
$\gamma(s)$. As is easily seen, the norm $|\partial/\partial 
H_i|$ is equal to
\begin{equation*}
\frac12\sqrt{\frac{(-1)^{n-1}G_i(b_i)}{\prod_{m=1}^n(f_m(x_m)-b_i)}}\ .
\end{equation*}

At the point $\gamma(s)$ where $s\in S_i$, we put $\nu_i^2=f_i(x_i(s))-H_i$ if $b_i=a_i^+$ (resp. 
$\nu_i^2=H_i-f_{i+1}(x_{i+1}(s))$ if $b_i=a_i^-$),
and use $\nu_i$ as a coordinate function on 
$U^*_{\gamma(s)}M$ 
instead of $H_i$. 
We choose the sign of $\nu_i$ so that it is equal 
with the sign of $\xi_i$ (resp. $\xi_{i+1}$). Then we put $\tilde V_i(s)=
\sharp(\frac{\partial}
{\partial \nu_i}/|\frac{\partial}{\partial \nu_i}|)$
in this case. 
It is easy to see that $\R\ni s\mapsto \tilde V_i(s)$ is
smooth up to the sign. Therefore we can take a smooth
vector field
$V_i(t)$ along the geodesic $\gamma(t)$ such that
$V_i(t)= \pm\tilde V_i(t)$ for any $t\in \R$.
We now define the Jacobi field $Y_{i, s}(t)$ along the geodesic
$\gamma(t)$ by the initial conditions $Y_{i,s}(s)=0$ and 
$Y'_{i,s}(s)=V_i(s)$ for any $s\in \R$, where 
$Y'_{i,s}(t)$ denotes the covariant derivative of
$Y_{i,s}(t)$ with respect to $\partial/\partial t$.

Let us denote by $\Omega(Y,Z)$ the symplectic inner product 
of two Jacobi fields along $\gamma(t)$ which are
orthogonal to $\dot\gamma(t)$ for any $t$:
\begin{equation*}
\Omega(Y,Z)=g(Y(t), Z'(t))-g(Y'(t),Z(t))\ ,
\end{equation*}
which is constant in $t$.
Let $\mathcal Y_i$ be the vector space of Jacobi fields along
$\gamma(t)$ spanned by $\{Y_{i,s}(t)\ |\ s\in \R\}$.

\begin{prop}\label{prop:jf1}
Along the geodesic $\gamma(t)$ such that $b_1,\dots,b_{n-1}$ 
and $a_0, \dots, a_n$ are all distinct, the Jacobi fields 
defined above have the following properties.
\begin{enumerate}
\item $Y_{i,s}(t)\in\R V_i(t)$ for any $i$ and $s,t\in\R$.
Also, $V_1(t),\dots,V_{n-1}(t)$, $\dot\gamma(t)$ are
mutually orthogonal for any $t\in\R$.
\item $\mathcal Y_i$ and $\mathcal Y_j$ $(i\ne j)$ are mutually orthogonal 
with respect to the symplectic inner product $\Omega$, 
i.e., $\Omega(Y_i,Y_j)=0$
for any $Y_i\in \mathcal Y_i$ and $Y_j\in\mathcal Y_j$.
\item Each $V_i(t)$ is parallel along the geodesic 
$\gamma(t)$.
\item Each $\mathcal Y_i$ is two-dimanesional.
\item If $\gamma(s_1)$ and $\gamma(s_2)$ $(s_1<s_2)$ 
are mutually
conjugate along the geodesic $\gamma(t)$, then there is
$i$ and a nonzero Jacobi field $Y\in\mathcal Y_i$ such that
$Y(s_1)=Y(s_2)=0$.
\item $Y_{i,s_1}(s_2)\ne 0$ if $s_1\not\in S_i$,
$s_2\ne s_1$, and either $[s_1,s_2)\cap S_i=\emptyset$, $s_1<s_2$ or $(s_2,s_1]\cap S_i=\emptyset$, $s_2<s_1$.
\item The Jacobi field $Y_{i,s_1}(t)$ $(s_1\in S_i)$
vanishes at $t=s_2$ if and only if $s_2\in S_i$.
\end{enumerate}
\end{prop}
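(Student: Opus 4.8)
The plan is to read each $Y_{i,s}$ as a genuine variation field and then exploit that, because the geodesic flow is completely integrable, varying a single first integral displaces the geodesic only in the one parallel direction $V_i$. Writing $\Phi_\tau$ for the geodesic flow on $T^*M$ and $\mu(t)=\flat(\dot\gamma(t))=\Phi_{t-s}(\mu(s))$, I would first record that
$$Y_{i,s}(t)=\frac{1}{|\partial/\partial H_i|(s)}\,\pi_*(\Phi_{t-s})_*\Big(\tfrac{\partial}{\partial H_i}\Big)\Big|_{\mu(s)},$$
with $\partial/\partial\nu_i$ in place of $\partial/\partial H_i$ when $s\in S_i$; this is just the statement that $Y_{i,s}$ is the field of the geodesic variation obtained by moving the initial covector inside $U^*_{\gamma(s)}M$ in the $H_i$-direction. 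Two structural facts will be used repeatedly: the vertical subspace of $T(T^*M)$ is $d\alpha$-Lagrangian and projects to $0$ under $\pi_*$, and for two variation fields generated by vertical vectors $W,W'$ the symplectic product satisfies $\Omega=d\alpha(W,W')$ because $\Phi_\tau$ preserves $d\alpha$.

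The orthogonality assertions in (1) are the easy part. Since $\partial/\partial H_i$ is tangent to the sphere $U^*_{\gamma(s)}M$ it is orthogonal to the radial covector $\xi$, whence $V_i\perp\dot\gamma$; and $\langle\partial/\partial H_i,\partial/\partial H_j\rangle$ transforms, after inserting the diagonal dual metric and $\xi_k^2=(-1)^{k-1}\prod_l(f_k-b_l)$, into $\tfrac14$ times the sum of residues at $\lambda=f_k$ of $R(\lambda)=G_i(\lambda)G_j(\lambda)\big/\big(\prod_l(\lambda-f_l)\prod_m(\lambda-b_m)\big)$. For $i\ne j$ every residue at $\lambda=b_m$ vanishes because $G_i(b_m)G_j(b_m)=0$, while $R=O(\lambda^{-3})$ kills the residue at infinity, so the sum is $0$ — a contour argument of exactly the type of Lemma \ref{lemma:flat}. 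The heart of the proposition, and the step I expect to be the main obstacle, is the direction claim $Y_{i,s}(t)\in\mathbb RV_i(t)$. I would prove it by implicitly differentiating the separated orbit and length relations (\ref{eq:geodsigma1}) and (\ref{eq:geodlength}) with respect to $b_i$, holding $\gamma(s)$ and the arc length $t-s$ fixed: this gives an $n\times n$ linear system for the induced phase variations $\partial\sigma_k(t)/\partial b_i$, and the claim is that the resulting coordinate displacement $(\delta x_1,\dots,\delta x_n)$ is proportional to the components $g^{kk}(-1)^kG_i(f_k)/2\xi_k$ of $V_i$; as in \S4 this proportionality is again a residue identity. Combined with $Y_{i,s}\perp\dot\gamma$ (automatic, since $g(Y_{i,s}(t),\dot\gamma)=(t-s)\,g(V_i(s),\dot\gamma)=0$) and the orthogonality of the $V_j$, this forces $Y_{i,s}(t)\in\mathbb RV_i(t)$.

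Granting (1), I would next get (4): the map $\mathcal Y_i\to T_{\gamma(s_0)}M\oplus T_{\gamma(s_0)}M$, $Y\mapsto(Y(s_0),Y'(s_0))$, is injective, and since every $Y\in\mathcal Y_i$ has the form $\phi V_i$ its image is parametrized by the two numbers $(\phi(s_0),\phi'(s_0))$, so $\dim\mathcal Y_i\le2$; choosing $s_1$ near $s_0$ with $Y_{i,s_1}(s_0)\ne0$ (the short-time case of (6)) supplies a second independent field, giving $\dim\mathcal Y_i=2$. Parallelism (3) then drops out without any curvature computation: writing two independent fields as $\phi_1V_i,\phi_2V_i$, substituting into the Jacobi equation and forming $\phi_2(\text{eq}_1)-\phi_1(\text{eq}_2)$ yields $W'V_i+2WV_i'=0$ for the Wronskian $W=\phi_1\phi_2'-\phi_1'\phi_2$; since $|V_i|\equiv1$ forces $V_i\perp V_i'$, the $V_i$-component gives $W'=0$, and then $W\ne0$ gives $V_i'=0$. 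With the $V_i$ parallel and mutually orthogonal, (2) is immediate: for $Y_i=\phi V_i\in\mathcal Y_i$ and $Y_j=\psi V_j\in\mathcal Y_j$ with $i\ne j$, every term of $\Omega(Y_i,Y_j)=g(Y_i,Y_j')-g(Y_i',Y_j)$ carries a factor $g(V_i,V_j)=0$.

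For (5), (1) and (3) say that every Jacobi field orthogonal to $\dot\gamma$ splits as $\sum_i\phi_i(t)V_i(t)$ with the $\phi_i$ decoupled; vanishing at $s_1$ and $s_2$ forces $\phi_i(s_1)=\phi_i(s_2)=0$ for each $i$, and a nonzero such field has a nonzero block $\phi_iV_i\in\mathcal Y_i$ realizing the conjugacy. Finally (6) and (7) reduce each block to a single second-order scalar equation for $\phi$, that is, to a one-degree-of-freedom oscillator in the separated coordinate of the $i$-th mode. I would produce the explicit solution $\psi_{i,s}(t)=g(Y_{i,s}(t),V_i(t))$ from the integrals of \S3 and read off its zeros: the factor $|\partial/\partial H_i|(t)=\tfrac12\sqrt{(-1)^{n-1}G_i(b_i)/\prod_m(f_m-b_i)}$ blows up exactly at the turning times $t\in S_i$ (where $f_i$ or $f_{i+1}$ meets $b_i$), which is why the coordinate is switched to $\nu_i$ there, and this is what makes $Y_{i,s_1}$ with $s_1\in S_i$ vanish precisely on $S_i$, giving (7); for $s_1\notin S_i$ the monotonicity inequalities of Proposition \ref{prop:cond2} guarantee that $\psi_{i,s_1}$ cannot vanish again before the next turning time, giving (6). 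I expect the bookkeeping in (6)–(7) — matching the zero set of the scalar solution to $S_i$ and controlling signs across a half-period — to be the second genuinely laborious step, everything else being either a residue computation or a formal consequence of the decoupling.
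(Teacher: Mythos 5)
Your treatment of (1)--(5) is essentially the paper's argument: the decisive step in both is to differentiate the separated relations \eqref{eq:geodsigma1} along the variation generated by $\partial/\partial H_i$ and to recognize the resulting covector $\sum_l\epsilon'_l(-1)^lG_j(f_l)A(f_l)\bigl(\cdots\bigr)^{-1/2}d(f_l(x_l))$ as a nonzero multiple of $\flat(Y'_{j,s_2}(s_2))=\flat(V_j(s_2))$, the unwanted term dying by Lemma~\ref{lemma:flat} because $G_{i,j}$ has degree $\le n-3$; your ``$n\times n$ system for the phase variations'' is exactly this computation in different clothing, so the direction claim $Y_{i,s}(t)\in\R V_i(t)$ comes out the same way. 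Your Wronskian derivation of the parallelism in (3) and your residue proof that $\langle\partial/\partial H_i,\partial/\partial H_j\rangle=0$ are valid minor variants; the paper instead extracts both $g(V_i,V_j)=0$ and $V_i'=0$ from the antisymmetry of $\Omega$ together with (1).

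The genuine gap is in (6) and (7). For (6) you appeal to Proposition~\ref{prop:cond2}, but that proposition concerns the full-period integrals $\int_{a_i^+}^{a_{i-1}^-}$ and requires the monotonicity hypothesis \eqref{cond2} on $A$, which is not assumed in \S\ref{sec:jac} and is not what makes (6) true. What is actually needed is the partial-period integral appearing when one differentiates with $G=G_i$ (formula \eqref{eq:jac4}), and there the point is a pure sign observation: since $(-1)^{l-1}\prod_k(f_l-b_k)=\xi_l^2>0$ on each band, the integrand $(-1)^lG_i(f_l)A(f_l)\bigl((f_l-b_i)\sqrt{\cdots}\bigr)^{-1}$ has one fixed sign for every $l$ provided $f_l-b_i$ does not change sign, which is precisely what $[s_1,s_2]\cap S_i=\emptyset$ guarantees; no condition on $A$ enters. (The place where \eqref{cond2} genuinely is needed is the next statement, Proposition~\ref{prop:jacreg}, where $\sigma_l$ is allowed to run past a turning point of the $i$-th mode.) For (7), and for the endpoint case of (6) with $s_2\in S_i$, your remark that $|\partial/\partial H_i|$ blows up on $S_i$ identifies the right phenomenon but is not a proof: the integral in \eqref{eq:jac4} diverges like $(f_i-b_i)^{-1/2}$ at exactly the rate of the normalization, and one must multiply through by $|\nu_i|=\sqrt{f_i(x_i(\cdot))-b_i}$ and evaluate the finite limits at each endpoint separately, as in \eqref{eq:singjac2}. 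The nonzero one-sided limit gives $Y_{i,s_1}(s_3)\ne 0$ for $s_1\notin S_i$, $s_3\in S_i$ (completing (6)), while sending both endpoints into $S_i$ kills the right-hand side and yields $g(Y_{i,s_0}(s_3),Y'_{i,s_3}(s_3))=0$, hence the vanishing asserted in (7). Without this two-sided singular limit, the ``if'' direction of (7) is unproved in your sketch.
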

\begin{proof}
Let $\gamma(u,t)=(\dots,x_k(u,t),\dots)$ be a one-parameter 
family of geodesics such that $x_k(0,t)=x_k(t)$ and 
$(\partial/\partial u)|_{u=0}$ represents
the Jacobi field $Y_{i,s_1}(t)$. Suppose that $G=G_j$, 
$i\ne j$, and $s=s_1$ and $t=s_2$ do not 
belong to $S_i\cup S_j$ in the formula
(\ref{eq:geodsigma1}).
 We then differentiate the 
formula by $u$. Since
\begin{equation*}
\frac{\partial H_k}{\partial u}\big|_{u=0}\ne 0\quad (k=i)\ ;\quad
=0 \quad (k\ne i)\ ,
\end{equation*}
we have
\begin{equation}
\begin{gathered}
\sum_{l=1}^n\frac{\epsilon'_l(-1)^lG_j(f_l)A(f_l)}
{\sqrt{-\prod_{k=1}^{n-1}(f_l-b_k)
\cdot\prod_{k=0}^n(f_l-a_k)}}\,d(f_l(x_l))(Y_{i,s_1}(s_2))\\
-\frac1{2c}\sum_{l=1}^n\int_{\sigma_l(s_1)}^{\sigma_l(s_2)}\frac{(-1)^iG_{i,j}(f_l)A(f_l)}
{\sqrt{-\prod_{k=1}^{n-1}(f_l-b_k)
\cdot\prod_{k=0}^n(f_l-a_k)}}\ 
\ d\sigma_l=0\ ,
\end{gathered}
\end{equation}
where $c=\pm$ (the norm of $\partial/\partial H_i$ at 
$\gamma(s_1)$) and $f_l=f_l(x_l(s_2))$ in the first line, and 
$G_{i,j}(\lambda)=\prod_{k\ne i,j}(\lambda-b_k)$.
Observe that the second line in the above formula vanishes
by the formula (\ref{eq:geodsigma1}). Moreover, the 
covector
\begin{equation*}
\frac14\sum_{l=1}^n\frac{\epsilon'_l(-1)^lG_j(f_l)A(f_l)}
{\sqrt{-\prod_{k=1}^{n-1}(f_l-b_k)
\cdot\prod_{k=0}^n(f_l-a_k)}}\ 
d(f_l(x_l))\bigg|_{f_l=f_l(x_l(s_2))}
\end{equation*}
is equal to the one which is represented by $\partial
/\partial H_j$ at $\gamma(s_2)$, which is a nonzero scalar
multiple of
$\flat(Y'_{j,s_2}(s_2))$. Thus we have
\begin{equation*}
\Omega(Y_{i,s_1},Y_{j,s_2})=
g(Y_{i,s_1}(s_2),Y'_{j,s_2}(s_2))=0\ ,
\end{equation*}
which is valid for any $s_1,s_2\in\R$ by continuity. In 
particular, we have $g(Y_{i,s_1}(s_2),V_j(s_2))=0$ for any 
$j\ne i$, and also $g(V_i(s_1),V_j(s_1))=0$ by differentiating it
at $s_2=s_1$. Thus we have (1) and (2).

(3) and (4) follow immediately from (1) and (2).
The assertion (5) is also obvious.
Next, we shall prove (6). First, we assume $s_1<s_2$ and
$s_2\not\in S_i$. 
In the same way as above, we have
\begin{equation}\label{eq:jac4}
\begin{gathered}
\sum_{l=1}^n\frac{\epsilon'_l(-1)^lG_i(f_l)A(f_l)}
{\sqrt{-\prod_{k=1}^{n-1}(f_l-b_k)
\cdot\prod_{k=0}^n(f_l-a_k)}}\,
d(f_l(x_l))(Y_{i,s_1}(s_2))\\
+\frac1{2c}\sum_{l=1}^n\int_{\sigma_l(s_1)}^{\sigma_l(s_2)}\frac{(-1)^iG_{i}(f_l)A(f_l)}
{(f_l-b_i)\sqrt{-\prod_{k=1}^{n-1}(f_l-b_k)
\cdot\prod_{k=0}^n(f_l-a_k)}}\,
d\sigma_l=0\ .
\end{gathered}
\end{equation}
Note that, since $[s_1,s_2]\cap S_i=\emptyset$, $f_l-b_i$
never vanish on the interval $[\sigma_l(s_1),\sigma_l(s_2)]$.
The second line in the above formula being negative, we have
$g(Y_{i,s_1}(s_2),Y'_{i,s_2}(s_2))\ne 0$. Thus
$Y_{i,s_1}(s_2)\ne 0$.

Next, let us take $s_3\in S_i$ such that $s_1<s_3$ and $[s_1,s_3)\cap S_i=\emptyset$. As proved above, 
\begin{gather*}
\left|\frac{\partial }{\partial H_i}\right|_{\gamma(s_1)}
\left|\frac{\partial }{\partial H_i}\right|_{\gamma(s_2)}
g(Y_{i,s_1}(s_2),Y'_{i,s_2}(s_2))=\\
-\frac18\sum_{l=1}^n\int_{\sigma_l(s_1)}^{\sigma_l(s_2)}\frac{(-1)^iG_{i}(f_l)A(f_l)}
{(f_l-b_i)\sqrt{-\prod_{k=1}^{n-1}(f_l-b_k)
\cdot\prod_{k=0}^n(f_l-a_k)}}\ 
\ d\sigma_l
\end{gather*}
for any $s_2$ such that $s_1<s_2<s_3$. Suppose $b_i=a_i^+$.
Since
\begin{equation*}
g(Y_{i,s_1}(s_2),Y'_{i,s_2}(s_2))=\Omega(Y_{s_1},Y_{s_2})
=-g(Y'_{i,s_1}(s_1),Y_{i,s_2}(s_1))\,,
\end{equation*}
multiplying both sides by $2|\nu_i|=2\sqrt{f_i(x_i(s_2))-b_i}$, 
and taking a limit $s_2\to s_3$, we have
\begin{equation}\label{eq:singjac2}
-c'g(Y'_{i,s_1}(s_1), Y_{i,s_3}(s_1))=
\frac12\frac{(-1)^{i+1}G_{i}(b_i)A(b_i)}
{\sqrt{-\prod_{k\ne i}(b_i-b_k)
\cdot\prod_{k=0}^n(b_i-a_k)}}\ ,
\end{equation}
where $c'=|\partial/\partial H_i|_{\gamma(s_1)}
|\partial/\partial \nu_i|_{\gamma(s_3)}$.
Since the left-hand side of the above formula is equal to
\begin{equation*}
c'g(Y_{i,s_1}(s_3), Y'_{i,s_3}(s_3))\ , 
\end{equation*}
and since
the right-hand side does not vanish, we have
\begin{equation}\label{eq:singjac3}
Y_{i,s_1}(s_3)\ne 0\ ,\qquad
Y_{i,s_3}(s_1)\ne 0\ .
\end{equation}
The case where $s_2<s_1$ 
is similar. Therefore the assertion (6) follows.

Now, in the situation of (6), take $s_0\in S_i$ such that
$s_0<s_1$ and $(s_0,s_1]\cap S_i=\emptyset$. Then, again 
multiplying both sides of the formula (\ref{eq:singjac2})
by $|\nu_i|=\sqrt{f_i(x_i(s_1))-b_i}$ and taking a limit
$s_1\to s_0$, we have
\begin{equation*}
g(Y_{i,s_0}(s_3),Y'_{i,s_3}(s_3))=0\ .
\end{equation*}
Thus it follows that $Y_{i,s_0}(s_3)=0$, and 
combined with (\ref{eq:singjac3}) we have (7).
\end{proof}
The following corollary is immediate.
\begin{cor}\label{cor:conj}
Fix $t_0$ and let $t_0<t_1^i<t_2^i<\dots$ be the zeros of
the Jacobi field $Y_{i,t_0}(t)$ for $t\ge t_0$. Then:
\begin{enumerate}
\item If $t_0\in S_i$, then the set $\{t_k^i\}$
coincides with $\{t\in S_i\ |\ t>t_0\}$
\item If $t_0\not\in S_i$, then every $t_k^i\not\in S_i$,
and there is just one element of $S_i$ in the interval $(t_k^i,
t_{k+1}^i)$ for each $k$.
\item The set  of conjugate points of $\gamma(t_0)$ along
$\gamma(t)$ $(t>t_0)$ is equal to $\{\gamma(t_k^i) \ |
\ k\ge1, 1\le i\le n-1\}$.
\end{enumerate}
\end{cor}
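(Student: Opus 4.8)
The plan is to read off Corollary \ref{cor:conj} directly from parts (5), (6), and (7) of Proposition \ref{prop:jf1}, which have done all the real work. The key observation is that the conjugate points of $\gamma(t_0)$ are governed entirely by the zeros of the individual Jacobi fields $Y_{i,t_0}(t)$, because by (1)--(4) the space of Jacobi fields vanishing at $t_0$ and orthogonal to $\dot\gamma$ decomposes as the orthogonal direct sum $\bigoplus_i \mathcal{Y}_i$, each $\mathcal{Y}_i$ being two-dimensional and spanned by fields proportional to $V_i(t)$; and by (5) any conjugacy between $\gamma(s_1)$ and $\gamma(s_2)$ is witnessed by a nonzero field in some single $\mathcal{Y}_i$. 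So I would first note that (3) of the corollary is essentially a restatement of (5): a point $\gamma(t^*)$ is conjugate to $\gamma(t_0)$ precisely when some $Y_{i,t_0}(t^*)=0$ for some $i$, so the conjugate set is $\{\gamma(t_k^i)\}$ as $i$ and $k$ range.

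For parts (1) and (2) of the corollary I would fix $i$ and analyze the zero set $\{t > t_0 : Y_{i,t_0}(t)=0\}$ using (6) and (7). In the case $t_0 \in S_i$ (part (1)), Proposition \ref{prop:jf1}(7) states exactly that $Y_{i,t_0}(t)$ vanishes at $t$ iff $t \in S_i$; hence the zeros $\{t_k^i\}$ for $t > t_0$ are precisely $\{t \in S_i : t > t_0\}$, which is the claim. In the case $t_0 \notin S_i$ (part (2)), the argument is a little more delicate: by (6), if $[t_0, s) \cap S_i = \emptyset$ then $Y_{i,t_0}(s) \ne 0$, so the first zero $t_1^i$ cannot occur before we have passed a point of $S_i$; and (6) together with (7) shows that $t_1^i$ itself lies strictly after the first element of $S_i$ and is \emph{not} itself in $S_i$. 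I would then iterate: each zero $t_k^i \notin S_i$, and between consecutive zeros $t_k^i$ and $t_{k+1}^i$ there is exactly one point of $S_i$. The ``exactly one'' count is what requires care.

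The main obstacle is establishing this interlacing precisely, namely that the zeros of $Y_{i,t_0}$ and the points of $S_i$ strictly alternate when $t_0 \notin S_i$. The cleanest way I see is to run the following bootstrapping argument. Let $s_1$ be the first element of $S_i$ exceeding $t_0$; by (6) we have $Y_{i,t_0}(s) \ne 0$ for $t_0 < s \le s_1$, and in particular $Y_{i,t_0}(s_1) \ne 0$. Now consider the field $Y_{i,s_1}$ based at the singular time $s_1 \in S_i$: by (7) its zeros are exactly the points of $S_i$. Since $Y_{i,t_0}$ and $Y_{i,s_1}$ both lie in the two-dimensional space $\mathcal{Y}_i$ and are linearly independent (as $Y_{i,t_0}(s_1) \ne 0$ while $Y_{i,s_1}(s_1)=0$), the Wronskian-type constancy of $\Omega$ on $\mathcal{Y}_i$ lets me compare their zeros: the zeros of $Y_{i,t_0}$ must interlace with those of $Y_{i,s_1}$, i.e. with $S_i$. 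I would make this rigorous by invoking the standard Sturm separation principle for the one-dimensional second-order equation satisfied by the coefficient of $V_i(t)$ (legitimate because (3) guarantees $V_i$ is parallel, so each $Y \in \mathcal{Y}_i$ is $h(t)V_i(t)$ with $h$ solving a scalar Jacobi equation). Separation then gives exactly one zero of $Y_{i,t_0}$ strictly between consecutive zeros of $Y_{i,s_1}$, i.e. exactly one point of $S_i$ in each interval $(t_k^i, t_{k+1}^i)$, completing (2).
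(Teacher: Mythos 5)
Your derivation is correct and is exactly the intended reading of Proposition \ref{prop:jf1}: the paper offers no proof beyond declaring the corollary ``immediate,'' and your unpacking --- (7) giving part (1), the symmetry $Y_{i,t_0}(s)=0\Leftrightarrow Y_{i,s}(t_0)=0$ via $\Omega$ giving $t_k^i\notin S_i$, and Sturm separation within the two-dimensional space $\mathcal Y_i$ of fields $h(t)V_i(t)$ giving the interlacing in (2) --- supplies precisely the missing details. No gaps.
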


We shall prove one more result on the zeros of Jacobi fields
in this case, which needs
the assumption \eqref{cond2}.
\begin{prop}\label{prop:jacreg}
Fix $i$ and take $s_1$ and $s_2$ such that $s_1\not\in S_i$, $s_1<s_2$, and $\sigma_l(s_2)
-\sigma_l(s_1)\le 2(a_{l-1}^--a_l^+)$ for any $l$. Then $Y_{i,s_1}(s_2)
\ne 0$.
\end{prop}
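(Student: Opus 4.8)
The plan is to reduce the nonvanishing of $Y_{i,s_1}(s_2)$ to a sign statement about the integral appearing in \eqref{eq:jac4}, and then to locate the first conjugate point using the inequalities of Proposition \ref{prop:cond2}. Since $Y_{i,s_1}(s_2)\in\R V_i(s_2)$ and $Y'_{i,s_2}(s_2)=V_i(s_2)\ne0$ by Proposition \ref{prop:jf1}, one has, for $s_2\notin S_i$, that $Y_{i,s_1}(s_2)\ne0$ if and only if $g(Y_{i,s_1}(s_2),Y'_{i,s_2}(s_2))\ne0$. The engine of the argument is that the integrand of the second line of \eqref{eq:jac4} keeps a constant (negative) sign for every $l$ and every admissible value of $f_l$, as was already used in the proof of Proposition \ref{prop:jf1}(6): on $(a_l^+,a_{l-1}^-)$ the factor $G_i(f_l)/(f_l-b_i)$ has sign $(-1)^{l-1}$ and the radical is positive. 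Hence, as long as the segment avoids $S_i$, that integral is strictly negative and $Y_{i,s_1}(s_2)\ne0$; this settles the case $(s_1,s_2]\cap S_i=\emptyset$.

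First I would use the hypothesis to bound the number of crossings of $S_i$. The points of $S_i$ are those where the coordinate $l$ whose tent has $b_i$ as an endpoint attains the value $b_i$, and consecutive such points are separated in the variable $\sigma_l$ by exactly one full period $2(a_{l-1}^--a_l^+)$. Since $s_1\notin S_i$, the hypothesis $\sigma_l(s_2)-\sigma_l(s_1)\le 2(a_{l-1}^--a_l^+)$ applied to this distinguished coordinate forces $\sigma_l(s_2)$ to stay strictly below $\sigma_l$ of the \emph{second} point of $S_i$ following $s_1$. Thus $(s_1,s_2)$ meets $S_i$ in at most one point $s^*$, and it remains to treat $s_1<s^*<s_2<s^{**}$, where $s^{**}$ is the next point of $S_i$.

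In this remaining case I would combine the Sturm picture with the sign information. By Corollary \ref{cor:conj} and Proposition \ref{prop:jf1}(7), the solution $z=Y_{i,s^*}$ of the scalar Jacobi equation on $\R V_i$ has zero set exactly $S_i$, so its consecutive zeros are one period apart; by Sturm separation the zeros of $Y_{i,s_1}$ interlace those of $z$, and since $Y_{i,s_1}(s^*)\ne0$ by (6), the unique zero $t_1$ of $Y_{i,s_1}$ in $(s_1,s^{**})$ lies in $(s^*,s^{**})$. Consequently, for the given $s_2$, the assertion $Y_{i,s_1}(s_2)\ne0$ is equivalent to $s_2\ne t_1$; and since each $\sigma_l$ is monotone in $t$, it suffices to prove that $t_1$ already violates the period bound for some coordinate, i.e.\ $\sigma_l(t_1)-\sigma_l(s_1)>2(a_{l-1}^--a_l^+)$ for at least one $l$.

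The hard part will be this last, quantitative step: locating $t_1$ past the crossing $s^*$, where the integrand of \eqref{eq:jac4} develops a non-integrable $(f_l-b_i)^{-3/2}$ singularity so that \eqref{eq:jac4} can no longer be applied across the whole segment. To get around this I would continue the integral relation separately on $[s_1,s^*]$ and on $[s^*,t_1]$, controlling the two boundary contributions at $s^*$ by the $\nu_i$-regularization that already produced \eqref{eq:singjac2}--\eqref{eq:singjac3}, and then assemble the two same-sign pieces into a single expression over a full oscillation of the distinguished coordinate. At that point the monotonicity condition \eqref{cond2} enters decisively: the assembled, full-period integral is of the type controlled by Proposition \ref{prop:cond2}, in particular by the positivity of the $b_i$-derivative in part (2), which forces the combined quantity to retain a fixed sign and therefore to vanish only after the distinguished coordinate has completed strictly more than one full period. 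This yields $\sigma_l(t_1)-\sigma_l(s_1)>2(a_{l-1}^--a_l^+)$ for that coordinate, and hence $Y_{i,s_1}(s_2)\ne0$ throughout the allowed range.
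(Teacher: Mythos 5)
Your reduction to the sign of $g(Y_{i,s_1}(s_2),Y'_{i,s_2}(s_2))$, your treatment of the case $(s_1,s_2]\cap S_i=\emptyset$ via \eqref{eq:jac4}, and your observation that the hypothesis allows at most one crossing $s^*\in S_i$ inside $(s_1,s_2)$ all match the paper. The gap is in the remaining case $s^*<s_2$, which is the whole content of the proposition. Your plan is to split the singular integral relation at $s^*$, ``control the two boundary contributions at $s^*$ by the $\nu_i$-regularization'', and ``assemble the two same-sign pieces into a single expression over a full oscillation''. This cannot work as stated: near $s^*$ the integrand in the second line of \eqref{eq:jac4} behaves like $|f_l-b_i|^{-3/2}\,df_l$ and keeps a \emph{fixed} sign on both sides of $s^*$, so the two pieces over $[s_1,s^*]$ and $[s^*,t_1]$ each diverge with the same sign and there is no cancellation to exploit. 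The regularization behind \eqref{eq:singjac2}--\eqref{eq:singjac3} multiplies by a factor $|\nu_i|\to 0$ and computes the limit of a product; it is a different operation and does not yield a finite two-sided identity across $s^*$. You have named the right final ingredient (Proposition \ref{prop:cond2}(2)) but not the mechanism that makes it applicable.

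The paper's actual device is to rewrite the geodesic relation \emph{before} differentiating: since a full $\sigma_l$-period contributes $2\int_{a_l^+}^{a_{l-1}^-}$, one replaces $\int_{\sigma_l(s_1)}^{\sigma_l(s_2)}$ by $2\int_{a_l^+}^{a_{l-1}^-}$ minus the integral over the complementary interval $[\sigma_l(s_2),\,\sigma_l(s_1)+2(a_{l-1}^--a_l^+)]$; this is identity \eqref{eq:geod3}. Differentiating in the deformation parameter then gives \eqref{eq:diff3}: the boundary term, plus a complementary-interval integral of the singular kernel which is \emph{nonsingular} there --- precisely because the hypothesis together with $s^*<s_2$ forces that interval to avoid $S_i$ for $l=i,i+1$ --- and has a definite sign, plus $2\,\partial/\partial b_i$ of the full-period sum, which is positive by Proposition \ref{prop:cond2}(2). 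That is where the divergence is absorbed. Your Sturm-interlacing detour through the first zero $t_1$ is harmless but unnecessary; without the full-period rewriting, the quantitative step you defer to ``the hard part'' remains unproved.
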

\begin{proof}
Let $s_3\in S_i$ such that $s_1<s_3$ and $[s_1,s_3)\cap S_i=\emptyset$.
If $s_2\le s_3$, then the assertion follows from (5) of
the previous proposition. Now suppose $s_3<s_2$.
As above, we shall compute $g(Y_{i,s_1}(s_2),Y'_{i,s_2}(s_2))$. In this case, however,
the formula \eqref{eq:jac4} is invalid, because the integral
diverge at $t=s_3$. So, instead, we differentiate the 
formula
\begin{equation}\label{eq:geod3}
\begin{aligned}
&-\sum_{l=1}^{n}\int_{\sigma_l(s_2)}^{
2(a_{l-1}^--a_l^+)+\sigma_l(s_1)}\frac{(-1)^lG_i(f_l)
A(f_l)\ d\sigma_l}
{\sqrt{-\prod_{k=1}^{n-1}(f_l-b_k)
\cdot\prod_{k=0}^n(f_l-a_k)}}\\
&+2\sum_{l=1}^n\int_{a_l^+}^{a_{l-1}^-}
\frac{(-1)^lG_i(\lambda)
A(\lambda)\ d\lambda}
{\sqrt{-\prod_{k=1}^{n-1}(\lambda-b_k)
\cdot\prod_{k=0}^n(\lambda-a_k)}}=0
\end{aligned}
\end{equation}
in terms of the deformation parameter defining $cY_{i,s_1}$,
$c$ being $\pm$ (the norm of $\partial/\partial H_i$ at 
$\gamma(s_1)$):
\begin{equation}\label{eq:diff3}
\begin{aligned}
&\sum_{l=1}^n\frac{\epsilon'_l(-1)^lG_i(f_l)A(f_l)}
{\sqrt{-\prod_{k=1}^{n-1}(f_l-b_k)
\cdot\prod_{k=0}^n(f_l-a_k)}}\ 
d(f_l(x_l))(cY_{i,s_1}(s_2))\\
-&\frac1{2}\sum_{l=1}^{n}\int_{\sigma_l(s_2)}^{
2(a_{l-1}^--a_l^+)+\sigma_l(s_1)}\frac{(-1)^lG_i(f_l)
A(f_l)\ d\sigma_l}
{(f_l-b_i)\sqrt{-\prod_{k=1}^{n-1}(f_l-b_k)
\cdot\prod_{k=0}^n(f_l-a_k)}}\\
+&2\frac{\partial}{\partial b_i}
\sum_{l=1}^n\int_{a_l^+}^{a_{l-1}^-}
\frac{(-1)^lG_i(\lambda)
A(\lambda)\ d\lambda}
{\sqrt{-\prod_{k=1}^{n-1}(\lambda-b_k)
\cdot\prod_{k=0}^n(\lambda-a_k)}}=0\ ,
\end{aligned}
\end{equation}

Note that
$b_i$ is not contained in the range of $f_l$
while $\sigma_l$ moves in the interval $[\sigma_l(s_2),
2(a_{l-1}^--a_l^+)+\sigma_l(s_1)]$ $(l=i,i+1)$.
Since the second line of the formula \eqref{eq:diff3}
is positive or zero, and since the third line is positive
by Proposition \ref{prop:cond2} (2), it therefore follows
that $g(Y_{i,s_1}(s_2),Y'_{i,s_2}(s_2))\ne 0$.
\end{proof}
Next, we shall consider Jacobi fields along the geodesic
$\gamma(t)$ for which some $b_i$ is equal to $a_i$, but 
other $b_j$'s are not equal to any $a_k$ nor 
$b_k$.
For $i$ with 
$b_i=a_i$, let $S_i$ be the set of $s\in\R$ where
$f_i(x_i(s))=b_i$. One can see from the formula 
(\ref{eq:geodsigma1}) that $S_i$ is also the set of $s\in\R$
where $f_{i+1}(x_{i+1}(s))=b_i$, i.e., $s\in S_i$ if
and only if $\gamma(s)\in J_i$. For such $i$ and $s\in S_i$,
we define $\tilde Y_{i,s}(t)$ as the Jacobi field
$\pi_*(X_{F_i})$ along the geodesic $\gamma(t)$.
For $s\not\in S_i$, $Y_{i,s}(t)$ is defined as before.
Also, for $j$ with $b_j\ne a_j$, the set $S_j$
and the Jacobi fields $Y_{j,s}(t)$
are defined as before.
\begin{prop}
For a geodesic $\gamma(t)$ stated above, the statements in
Propositions \ref{prop:jf1}, \ref{prop:jacreg} 
and Corollary \ref{cor:conj} equally hold.
\end{prop}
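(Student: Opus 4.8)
The plan is to split the indices into the \emph{regular} ones $j$ (those with $b_j\ne a_j$) and the single \emph{degenerate} one $i$ (with $b_i=a_i$), to observe that the regular indices require no new argument, and to treat the degenerate index by a continuity argument from the all-distinct case of Proposition~\ref{prop:jf1}. The starting point is that, since $\{F_i,E\}=0$, the hamiltonian flow of $F_i$ carries geodesics to geodesics, so its infinitesimal generator projects to a Jacobi field; hence $\tilde Y_{i,s}(t)=\pi_*(X_{F_i})$ is a genuine Jacobi field along $\gamma(t)$, and it is the \emph{same} field for every $s\in S_i$. By Lemma~\ref{lemma:subm}(1) the hypothesis $b_i=a_i$ is equivalent to $F_i\equiv 0$ along the lift of $\gamma$, so $\gamma$ meets $J_i$ exactly at the times $s\in S_i$.

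For the regular indices nothing changes: each such $b_j$ stays distinct from every $a_k$, from the other $b_k$, and from $b_i$, so the coordinates $H_j$ or $\nu_j$, the parallel fields $V_j$, the Jacobi fields $Y_{j,s}$, and every integral identity invoked in the proofs of Propositions~\ref{prop:jf1} and~\ref{prop:jacreg} remain valid verbatim. In particular the orthogonality computation obtained by differentiating \eqref{eq:geodsigma1} uses only that the chosen $G=G_{j'}$ has $j'$ distinct from the deformed index, so it applies whenever at least one of the two indices involved is regular.

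It remains to establish properties (1)--(7) for the degenerate index $i$. Here I would approximate $\gamma$ by a family $\gamma^{\varepsilon}$ of all-distinct type, with $b_i^{\varepsilon}\to a_i$ and all $b_\bullet^{\varepsilon},a_\bullet$ distinct, and pass to the limit. The case-(i) fields $Y^{\varepsilon}_{i,s}$, suitably normalized, converge to a scalar multiple of $\tilde Y_{i,s}=\pi_*(X_{F_i})$; since the symplectic product $\Omega$ is continuous, the orthogonality (2) (hence (3) and (4)) survives the limit, and (1) holds because the limiting field again lies in the line $\R V_i(t)$. The conjugate-point statements (5)--(7) and Corollary~\ref{cor:conj}(1) then amount to the assertion that $\tilde Y_{i,s}$ vanishes precisely at the times of $S_i$. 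For Proposition~\ref{prop:jacreg} one needs the positivity of the $\partial/\partial b_i$ integral of Proposition~\ref{prop:cond2}(2); as only $b_i$ degenerates while the remaining $b_k$ stay distinct, the limiting clause of Proposition~\ref{prop:cond2} keeps this quantity positive.

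The main obstacle is twofold. First, one must identify $\lim_{\varepsilon\to 0}Y^{\varepsilon}_{i,s}$ with $\tilde Y_{i,s}$ and pin down its zeros: near a $J_i$-crossing the coordinate $\nu_i$ (with $\nu_i^2=f_i-b_i$) degenerates and the separated formulas \eqref{eq:jac4} and \eqref{eq:diff3} develop divergences (the factor $1/(f_l-b_i)$ blows up as $b_i\to a_i$), so the zero structure cannot be read off from them directly. Instead, because $F_i$ vanishes to second order along $J_i$ (Lemma~\ref{lemma:subm}), the field $X_{F_i}$ acts in the $(f_i-a_i,\,f_{i+1}-a_i)$ plane like an infinitesimal rotation, which forces $\tilde Y_{i,s}$ to have a \emph{simple} zero at each time of $S_i$ and no other zero, giving Corollary~\ref{cor:conj}(1). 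Second, for Proposition~\ref{prop:jacreg} the symplectic product $g(Y_{i,s_1}(s_2),Y'_{i,s_2}(s_2))$ must remain nonzero in the limit; although the integral formula degenerates, this quantity is intrinsic and bounded and equals the limit of its case-(i) values, each of definite sign by Proposition~\ref{prop:jacreg} in the all-distinct case. Showing that this limit is bounded away from zero as $b_i^{\varepsilon}\to a_i$ — so that the degeneration of the formula does not collapse the sign — is the technical heart of the argument, and is precisely where the positivity supplied by Proposition~\ref{prop:cond2}(2) and its limiting version is used.
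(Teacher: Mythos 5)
Your reduction to the single degenerate index $i$, and your identification of $\tilde Y_{i,s}=\pi_*(X_{F_i})$ as a genuine Jacobi field vanishing on $J_i$, agree with the paper's setup; the paper likewise dismisses the regular indices as requiring nothing new. But where you then reach for a limiting argument, the paper makes an exact computation that removes every difficulty you flag: since $\omega(\,\cdot\,,X_{F_i})=dF_i$, one has, for $s_1\notin S_j$ and $s_2\in S_i$,
\begin{equation*}
\Omega(Y_{j,s_1},\tilde Y_{i,s_2})
= c\,\omega\Bigl(\tfrac{\partial}{\partial H_j},X_{F_i}\Bigr)
= c\,\frac{\partial c_i}{\partial b_j}
= \frac{c\,\prod_{m\ne j}(a_i-b_m)}{\prod_{k\ne i}(a_i-a_k)},
\end{equation*}
by \eqref{b-c}. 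Because $b_i=a_i$, this is $0$ for $j\ne i$ and nonzero for $j=i$, in closed form, with no $\varepsilon\to 0$ anywhere. Everything then follows: the orthogonality of $\mathcal Y_j$ to $\tilde Y_{i,s}$ for $j\ne i$, and, pairing against $Y_{i,t}$ with $t\notin S_i$ (which vanishes at $t$), the fact that $\tilde Y_{i,s}(t)\ne 0$ off $S_i$.

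This is why your proposal, as written, has a genuine gap rather than merely a different route. You yourself isolate the two load-bearing claims --- that $Y^{\varepsilon}_{i,s}$, suitably normalized, converges to a nonzero multiple of $\tilde Y_{i,s}$, and that the pairings $g(Y_{i,s_1}(s_2),\tilde Y'_{i,s_2}(s_2))$ stay bounded away from zero as the separated integral formulas degenerate --- and then declare them ``the technical heart'' without proving them; that is precisely the part of the proof that cannot be waved at, since the factor $1/(f_l-b_i)$ in \eqref{eq:jac4} genuinely blows up. Your substitute for the zero structure, namely that $X_{F_i}$ acts as an infinitesimal rotation in the $(f_i-a_i,\,f_{i+1}-a_i)$ plane so that $\tilde Y_{i,s}$ has ``no other zero,'' is a statement about local behavior near $J_i$ and cannot exclude zeros of $\tilde Y_{i,s}$ at times away from $S_i$; note that $F_i(\flat(\dot\gamma(t)))=c_i=0$ along the whole geodesic, so the nonvanishing of $\pi_*X_{F_i}$ away from $J_i$ is not automatic and needs the global symplectic pairing above. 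If you compute $\partial c_i/\partial b_j$ directly from \eqref{b-c}, you will find the limiting apparatus is unnecessary and the proof collapses to a few lines.
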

\begin{proof}
Only the parts related to the Jacobi field 
$\tilde Y_{i,s}(t)=\pi_*(X_{F_i})$ would be nontrivial.
Suppose $b_i=a_i$ and $s_1\not\in S_j$, $s_2\in S_i$.
Considering the symplectic inner product of two Jacobi fields
$Y_{j,s_1}(t)$ and $\tilde Y_{i,s_2}(t)$, we have
\begin{gather*}
\Omega(Y_{j,s_1},\tilde Y_{i,s_2})=
c\,\omega\left(\frac{\partial}{\partial H_j},X_{F_i}
\right)_{\flat(\dot\gamma(s_1))}\\
= c\,\frac{\partial c_i}{\partial b_j}=
\frac{c\,\prod_{m\ne j}(a_i-b_m)}
{\prod_{\substack{1\le k\le n-1\\k\ne i}}(a_i-a_k)}\quad
\begin{cases}
=0\quad (j\ne i)\\
\ne 0\quad (j=i)
\end{cases}\ ,
\end{gather*}
where $\omega$ is the symplectic 2-form $\sum_kd\xi_k\wedge
dx_k$, $\partial/\partial H_j$ is the tangent vector
to $U^*_{\gamma(s_1)}M$ at $\flat(\dot\gamma(s_1))$ defined
as before, and $c=1/|\partial/\partial H_j|$.
The proposition follows from this formula.
\end{proof}

Next, we shall consider Jacobi fields along a 
geodesic for which there are some $j$ such that
$b_j=b_{j-1}$ and there may be some $i$ such that 
$b_i=a_i$, but there is no $l$ such that
$b_l=a_{l+1}$ or $b_l=a_{l-1}$. 
In this case, 
$f_j(x_j(t))(=b_j=b_{j-1})$
remains constant along the geodesic $\gamma(t)$. We put
this value $\lambda_j^0$ for convenience.
For each point $\gamma(s)$ on the geodesic,
we adopt $\mu_j,
\mu_{j-1}$ as the coordinate functions on the
unit cotangent space $U^*_{\gamma(s)}M$,
around the covector $\flat(\dot\gamma(s))$,
instead of $H_j, H_{j-1}$, defined by the 
formula:
\begin{equation*}
\mu_{j-1}=H_{j-1}+H_j-2\lambda_j^0,\quad
\mu_j^2=4(H_{j-1}-\lambda_j^0)(\lambda_j^0-H_j)
\ .
\end{equation*}
We choose the sign of $\mu_j$ so that it is equal to that of
$\xi_j$. Let us denote by $Z_{j,s}(t)$, 
$Z_{j-1,s}(t)$ the Jacobi fields along the 
geodesic $\gamma(t)$ with the initial conditions
\begin{equation*}
Z_{k,s}(s)=0,\ Z_{k,s}'(s)=\sharp(\partial/
\partial \mu_k)/|\partial/\partial \mu_k|
\quad (k=j,j-1)\ .
\end{equation*}
Note that
\begin{equation*}
\left|\frac{\partial}{\partial \mu_{j-1}}\right|=
\left|\frac{\partial}{\partial \mu_{j}}\right|=
\frac12\sqrt{\frac{(-1)^nG_{j,j-1}(\lambda_j^0)}
{\prod_{m\ne j}(f_m-\lambda_j^0)}},\quad
\left\langle\frac{\partial}{\partial \mu_{j-1}},
\frac{\partial}{\partial \mu_{j}}\right\rangle=0
\end{equation*}
at each covector $\flat(\dot\gamma(s))$.

Define the real number $\theta_{s_1}(s_2)$ by 
the formula
\begin{equation}\label{eq:theta}
\begin{gathered}
\sum_{\substack{1\le l\le n\\l\ne j}}\int_{\sigma_l(s_1)}^{\sigma_l(s_2)}\frac{(-1)^l
G_{j,j-1}(f_l)A(f_l)\ d\sigma_l}
{|f_l-\lambda_j^0|\sqrt{-\prod_{k\ne j,j-1}(f_l-b_k)
\cdot\prod_{k=0}^n(f_l-a_k)}}\\
+2\theta_{s_1}(s_2)\ \frac{(-1)^jG_{j,j-1}(\lambda_j^0)A(\lambda_j^0)}
{\sqrt{\prod_{k\ne j,j-1}(\lambda_j^0-b_k)
\prod_k(\lambda_j^0-a_k)}}\ =0\ .
\end{gathered}
\end{equation}
We then have the following proposition.
\begin{prop}
\begin{enumerate}
\item
$Z_{k,s_1}(s_2)=0$ for $k=j,j-1$ and any 
$s_1,s_2$ such that $\theta_{s_1}(s_2)=\pi$.
\item $Z_{j,s_1}(s_2)$ and $Z_{j-1,s_1}(s_2)$
are linearly independent for any $s_1$ and $s_2$
such that $0<\theta_{s_1}(s_2)<\pi$.
\end{enumerate}
\end{prop}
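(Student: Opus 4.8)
The plan is to reduce the whole statement to a two-dimensional, isotropic sub-problem attached to the degenerate block $(j-1,j)$, which is exactly what the coincidence $b_{j-1}=b_j$ produces. First I would record the structural facts: along $\gamma$ one has $\Theta(\lambda_j^0)=0$ with a double root, so $\xi_j\equiv 0$ and $f_j(x_j(t))\equiv\lambda_j^0$; thus $x_j$ is frozen and the pair $\mu_{j-1},\mu_j$ is a polar-type chart on $U^*_{\gamma(s)}M$ centred at $\flat(\dot\gamma(s))$, with $\mu_j$ the radial variable. Since $\sharp(\partial/\partial\mu_k)$ is tangent to the unit sphere, the initial velocities $W_k(s):=\sharp(\partial/\partial\mu_k)/|\partial/\partial\mu_k|$ $(k=j-1,j)$ are orthogonal to $\dot\gamma(s)$ and, by the displayed norm formula, orthonormal. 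Hence $Z_{j-1,s}$ and $Z_{j,s}$ are normal Jacobi fields with $Z'_{k,s}(s)=W_k(s)$.

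Step 1 (confinement to a parallel $2$-plane). I would show that $Z_{j-1,s_1}$ and $Z_{j,s_1}$ stay in the parallel $2$-plane field $\Pi(t)$ obtained by parallel transport of $\mathrm{span}(W_{j-1}(s_1),W_j(s_1))$, and that the pointwise frame $W_{j-1}(t),W_j(t)$ is itself parallel. This is the analogue of Proposition \ref{prop:jf1}(1)--(3): computing the symplectic products $\Omega(Z_{k,s_1},Y_{m,s_2})$ and $\Omega(Z_{k,s_1},\tilde Y_{m,s_2})$ against the Jacobi fields of the nondegenerate blocks $m\ne j-1,j$ by differentiating (\ref{eq:geodsigma1}) with a polynomial $G$ carrying the factor $(\lambda-b_m)$, one gets $g(Z_{k,s_1}(t),V_m(t))\equiv 0$; together with $Z_{k,s_1}\perp\dot\gamma$ this forces $Z_{k,s_1}(t)\in\Pi(t)$, and $\Pi$ is parallel because every $V_m$ and $\dot\gamma$ are. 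These are the routine, nondegenerate pieces and go through as before, so that $\{W_{j-1}(s_2),W_j(s_2)\}$ is an orthonormal basis of $\Pi(s_2)$.

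Step 2 (the phase). Let $M(s_2)$ be the $2\times2$ matrix with entries $M_{kk'}=g(Z_{k,s_1}(s_2),W_{k'}(s_2))=\Omega(Z_{k,s_1},Z_{k',s_2})$ for $k,k'\in\{j-1,j\}$; since $Z_{k,s_1}(s_2)\in\Pi(s_2)$, the entry $M_{kk'}$ is the $W_{k'}(s_2)$-coordinate of $Z_{k,s_1}(s_2)$. The whole proposition reduces to the claim that $M(s_2)=\sin\theta_{s_1}(s_2)\cdot O(s_2)$ for an invertible matrix $O(s_2)$ (in fact a positive scalar matrix). To get this I would run the deformation argument of Propositions \ref{prop:jf1} and \ref{prop:jacreg} with the family whose initial Jacobi field is $Z_{k,s_1}$ (moving $\mu_k$ and freezing the remaining $\mu$'s and $H_m$'s) and differentiate the separated equations with $G=G_{j,j-1}$. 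The new feature is that the radicand now carries the double factor $(f_l-\lambda_j^0)^2$, so the $l=j$ term, where $f_j\equiv\lambda_j^0$, is genuinely singular: the naive differentiated equation (the analogue of (\ref{eq:jac4})) diverges there. This is exactly where (\ref{eq:theta}) enters. The degeneracy $b_{j-1}=b_j$ makes the linearized motion in $\Pi$ isotropic, so within $\Pi$ the Jacobi equation is a single scalar oscillator whose accumulated phase, computed through the integrable structure, is $\theta_{s_1}(s_2)$: the $l\ne j$ integrals in (\ref{eq:theta}) give the phase contributed by the oscillating coordinates, while the constant term $2\theta\,(-1)^jG_{j,j-1}(\lambda_j^0)A(\lambda_j^0)/\sqrt{\cdots}$ is the regularized contribution of the frozen radial coordinate $x_j$. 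The outcome is $M(s_2)=\sin\theta_{s_1}(s_2)\cdot O(s_2)$.

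Step 3 (conclusion) and the main difficulty. Granting Step 2, both assertions are immediate. If $\theta_{s_1}(s_2)=\pi$ then $\sin\theta=0$, so $M(s_2)=0$ and, reading off coordinates in the orthonormal frame $\{W_{j-1}(s_2),W_j(s_2)\}$, we get $Z_{k,s_1}(s_2)=0$ for $k=j,j-1$, proving (1). If $0<\theta_{s_1}(s_2)<\pi$ then $\det M(s_2)\ne 0$, so $Z_{j-1,s_1}(s_2)$ and $Z_{j,s_1}(s_2)$ are linearly independent, proving (2). The main obstacle is Step 2: pushing the deformation computation through the polar singularity at $\mu_j=0$. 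As in the proof of Proposition \ref{prop:jacreg}, I would differentiate a half-period--corrected identity of the type (\ref{eq:geod3})--(\ref{eq:diff3}) rather than the naive one; the delicate points are to verify that the $l=j$ contribution regularizes to precisely the constant $\theta$-term of (\ref{eq:theta}), with the correct factor $2$ and sign, and that $\theta_{s_1}(s_2)$ increases monotonically in $s_2$ so that $\theta=\pi$ is indeed the first simultaneous zero, the latter being where the inequalities of Proposition \ref{prop:cond2} are invoked.
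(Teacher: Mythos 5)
Your reduction is the right one: confine $Z_{j-1,s_1},Z_{j,s_1}$ to the isotropic $2$-plane attached to the degenerate block, and show that the $2\times 2$ matrix $M(s_2)$ of products $g(Z_{k,s_1}(s_2),Z'_{k',s_2}(s_2))$ equals $\sin\theta_{s_1}(s_2)$ times an invertible matrix. That is exactly the structure the paper establishes (its formulas \eqref{eq:z1}--\eqref{eq:z3} give $M=S\,R(\theta)$ with $S$ a nonzero multiple of $\sin\theta$ and $R(\theta)$ the rotation by $\theta$ --- so, a small correction, $O(s_2)$ is a scalar times a rotation, \emph{not} a positive scalar matrix). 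Your Step~1 and Step~3 are sound and match the paper's use of the $\Omega$-orthogonality of the limit of $\mathcal Y_j+\mathcal Y_{j-1}$ to the other blocks.

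The genuine gap is that Step~2, which is the entire content of the proposition, is asserted rather than proved, and the route you propose for it is problematic. You want to differentiate the separated equations \emph{along the degenerate geodesic itself} and regularize the divergent $l=j$ term. But on $\gamma(t)$ the coordinate $f_j$ is frozen at $\lambda_j^0$: there is no oscillation left in the $l=j$ channel, so no amount of half-period correction of type \eqref{eq:geod3}--\eqref{eq:diff3} lets you read off the accumulated phase $\theta_{s_1}(s_2)$ or the rotation $R(\theta)$ from data on $\gamma$ alone; the $\cot\theta$ and $\tan\theta$ that distinguish the two covectors $\partial/\partial H_j$ and $\partial/\partial H_{j-1}$ at $\gamma(s_2)$ are invisible in the degenerate limit. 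The paper's proof instead resolves the degeneracy by a family of geodesics with $b_{j-1}(u)=\lambda_j^0+u^2$ and $\gamma(u,s_1)=\gamma(s_1)$, applies the already-proved nondegenerate results (the block orthogonality of Proposition~\ref{prop:jf1} and formula \eqref{eq:singjac2}) to $\gamma(u,t)$, and then takes $u\to 0$: the angle $\theta(u,t)$ defined by $f_j(x_j(u,t))=b_j\cos^2\theta+b_{j-1}(u)\sin^2\theta$ converges to $\theta_{s_1}(t)$ precisely because of \eqref{eq:theta}, and the limiting frames at $\gamma(s_2)$ are rotated by $\theta$ relative to $Z'_{j,s_2},Z'_{j-1,s_2}$, which is where the $\sin\theta$ factor comes from. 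You would need to adopt this perturbation-and-limit mechanism (or supply an independent derivation of the rotation) for your Step~2 to go through; also note that monotonicity of $\theta$ and Proposition~\ref{prop:cond2} are not needed for this proposition --- they enter only later, in Proposition~\ref{prop:ineqsing}.
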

\begin{proof}
We consider a one-parameter family of geodesics $t\to\gamma(u,t)$ such that $\gamma(0,t)=\gamma(t)$, 
$\gamma(u,s_1)=\gamma(s_1)$, and the values $b_i$
of the first integrals $H_i$ for $\gamma(u,t)$ are the same as those 
for $\gamma(t)$ except that $b_{j-1}(u)=
H_{j-1}(\flat(\dot\gamma(u,t)))=\lambda_j^0+u^2$.
Since $b_j=\lambda_j^0=f_j(x_j(u,s_1))$ for any $u$, it 
follows that the Jacobi fields $Y_{j,s_1}(t)$ and
$Y_{j-1,s_1}(t)$ are defined along the geodesic 
$\gamma(u,t)$ for $u\ne 0$. Observe that on the
unit cotangent space $U^*_{\gamma(s_1)}M$, 
$(\partial/\partial \nu_j)/|\partial/\partial \nu_j|$ tends to 
$\pm(\partial/\partial \mu_j)/|\partial/\partial \mu_j|$
and $(\partial/\partial H_{j-1})/|\partial/\partial H_{j-1}|$ tends to 
$(\partial/\partial\mu_{j-1})/|\partial/
\partial\mu_{j-1}|$ as $u\to 0$. Thus the Jacobi 
fields 
$Y_{j,s_1}(t)$ and $Y_{j-1,s_1}(t)$ along
the geodesic $\gamma(u,t)$ converge to Jacobi fields 
$Z_{j,s_1}(t)$ and $Z_{j-1,s_1}(t)$ up to the sign along the 
geodesic $\gamma(t)$ as $u\to 0$.

Moreover, with this procedure of taking the limit, we claim 
that the Jacobi fields $Y_{j,s_2}(t)$ and $Y_{j-1,s_2}(t)$
along the geodesic $\gamma(u,t)$ tend to 
\begin{equation*}
\epsilon\left(\cos\theta Z_{j,s_2}(t)+
\sin\theta Z_{j-1,s_2}(t)\right)\ 
\text{and}\ \epsilon
\left(-\sin\theta Z_{j,s_2}(t)+
\cos\theta Z_{j-1,s_2}(t)\right)
\end{equation*}
respectively, where $\epsilon=\pm 1$ and 
$\theta=\theta_{s_1}(s_2)$.
To see this, we begin with the formula before taking the 
limit:
\begin{equation}
\sum_{i=1}^n\int_{\sigma_i(s_1)}^{\sigma_i(s_2)}\frac{(-1)^iG_{j,j-1}(f_i)A(f_i)}
{\sqrt{-\prod_{k=1}^{n-1}(f_i-b_k)
\cdot\prod_{k=0}^n(f_i-a_k)}}\ 
\ d\sigma_i=0\ .
\end{equation}
Define the function $\theta(u,t)$ by
\begin{gather*}
f_j(x_j(u,t))=b_j(\cos\theta(u,t))^2+b_{j-1}(u)
(\sin\theta(u,t))^2\ ,\\
\theta(u,s_1)=0,\qquad (\partial/\partial t)\theta\ge 0\ .
\end{gather*}
Then, taking the limit $u\to 0$, 
we see that
\begin{equation*}
\int_{\sigma_j(s_1)}^{\sigma_j(s_2)}\frac{(-1)^jG_{j,j-1}(f_j)A(f_j)}
{\sqrt{-\prod_{k=1}^{n-1}(f_j-b_k)
\cdot\prod_{k=0}^n(f_j-a_k)}}\ 
\ d\sigma_j
\end{equation*}
tends to 
\begin{equation*}
2\theta(0,s_2)\ \frac{(-1)^jG_{j,j-1}(\lambda_j^0)A(\lambda_j^0)}
{\sqrt{\prod_{k\ne j,j-1}(\lambda_j^0-b_k)
\prod_k(\lambda_j^0-a_k)}}\ .
\end{equation*}
Thus we have $\theta(0,t)=\theta_{s_1}(t)$ by (\ref{eq:theta}). The covector $\partial/\partial H_j$
at the point $\gamma(u,s_2)$ is equal to
\begin{equation*}
\frac14\sum_{i=1}^n\frac{\epsilon'_i(-1)^iG_j(f_i)A(f_i)
\ df_i}
{\sqrt{-\prod_{k=1}^{n-1}(f_i-b_k)
\cdot\prod_{k=0}^n(f_i-a_k)}}\ ,
\end{equation*}
which tends to, as $u\to 0$,
\begin{gather*}
\frac14\sum_{i\ne j}\frac{f_i-\lambda_j^0}
{|f_i-\lambda_j^0|}\frac{\epsilon'_i(-1)^i
G_{j,j-1}(f_i)A(f_i)\ df_i}
{\sqrt{-\prod_{k\ne j,j-1}(f_i-b_k)
\cdot\prod_{k=0}^n(f_i-a_k)}}\\
+\frac14\frac{(-1)^{j+1}\cot \theta\ G_{j,j-1}
(\lambda_j^0)A(\lambda_j^0)
\ df_j}
{\sqrt{\prod_{k\ne j,j-1}(\lambda_j^0-b_k)
\cdot\prod_{k=0}^n(\lambda_j^0-a_k)}}\ ,
\end{gather*}
where $\theta=\theta_{s_1}(s_2)$. Also, 
$\partial/\partial H_{j-1}$ tends to
\begin{gather*}
\frac14\sum_{i\ne j}\frac{f_i-\lambda_j^0}
{|f_i-\lambda_j^0|}\frac{\epsilon'_i(-1)^i
G_{j,j-1}(f_i)A(f_i)\ df_i}
{\sqrt{-\prod_{k\ne j,j-1}(f_i-b_k)
\cdot\prod_{k=0}^n(f_i-a_k)}}\\
+\frac14\frac{(-1)^{j}\tan \theta\ G_{j,j-1}
(\lambda_j^0)A(\lambda_j^0)
\ df_j}
{\sqrt{\prod_{k\ne j,j-1}(\lambda_j^0-b_k)
\cdot\prod_{k=0}^n(\lambda_j^0-a_k)}}\ ,
\end{gather*}
As is easily seen, we have
\begin{gather*}
\flat(Z'_{j-1,s_2}(s_2))
=\frac{c}4\sum_{i\ne j}\frac{f_i-\lambda_j^0}
{|f_i-\lambda_j^0|}\frac{\epsilon'_i(-1)^i
G_{j,j-1}(f_i)A(f_i)\ df_i}
{\sqrt{-\prod_{k\ne j,j-1}(f_i-b_k)
\cdot\prod_{k=0}^n(f_i-a_k)}}\\
\flat(Z'_{j,s_2}(s_2))=\frac{c}4\frac{(-1)^{j+1}G_{j,j-1}
(\lambda_j^0)A(\lambda_j^0)
\ df_j}
{\sqrt{\prod_{k\ne j,j-1}(\lambda_j^0-b_k)
\cdot\prod_{k=0}^n(\lambda_j^0-a_k)}}\ ,
\end{gather*}
where $c=1/|\partial/\partial \mu_{j-1}|=
1/|\partial/\partial \mu_{j}|$ at $\gamma(s_2)$.
Therefore the claim follows. 

From the formulas obtained above and 
(\ref{eq:singjac2}), we thus have
\begin{equation}\label{eq:z1}
\begin{gathered}
g\left(Z_{j-1,s_1}(s_2),\ \cos\theta\ Z'_{j,s_2}(s_2)+
\sin\theta\ Z'_{j-1,s_2}(s_2)\right)=0\ ,\\
g\left(Z_{j,s_1}(s_2),\ -\sin\theta\ Z'_{j,s_2}(s_2)+
\cos\theta\ Z'_{j-1,s_2}(s_2)\right)=0\ ,\\
g\left(Z_{j,s_1}(s_2),\ \cos\theta\ Z'_{j,s_2}(s_2)+
\sin\theta\ Z'_{j-1,s_2}(s_2)\right)\\
=\frac{\sin\theta}{4cc'}\frac{(-1)^{j}G_{j,j-1}
(\lambda_j^0)A(\lambda_j^0)}
{\sqrt{-\prod_{k\ne j,j-1}(\lambda_j^0-b_k)
\cdot\prod_{k=0}^n(\lambda_j^0-a_k)}}\ ,
\end{gathered}
\end{equation}
where $c$ and $c'$ are the norms of $\partial/\partial \mu_j$ at $\gamma(s_1)$ and $\gamma(s_2)$
respectively.
In particular, we have:
\begin{align*}
&\cos\theta\ \Omega(Z_{j-1,s_1},Z_{j,s_2})
+\sin\theta\ \Omega(Z_{j-1,s_1},Z_{j-1,s_2})=0\\
&-\sin\theta\ \Omega(Z_{j,s_1},Z_{j,s_2})
+\cos\theta\ \Omega(Z_{j,s_1},Z_{j-1,s_2})=0\ ,
\end{align*}
where $\theta=\theta_{s_1}(s_2)$. As is easily seen, the above
formula is also valid when $s_2<s_1$, in which case 
$\theta_{s_1}(s_2)=-\theta_{s_2}(s_1)<0$. Therefore, 
exchanging $s_1$ and $s_2$ in the above formula, we have
\begin{equation}\label{eq:z2}
\begin{aligned}
&\Omega(Z_{j,s_1},Z_{j,s_2})=
\Omega(Z_{j-1,s_1},Z_{j-1,s_2})\\
& \Omega(Z_{j-1,s_1},Z_{j,s_2})
=- \Omega(Z_{j,s_1},Z_{j-1,s_2})\ .
\end{aligned}
\end{equation}
By (\ref{eq:z1}) and (\ref{eq:z2}) we also have
\begin{equation}\label{eq:z3}
\begin{aligned}
g\left(Z_{j-1,s_1}(s_2),\ -\sin\theta\ Z'_{j,s_2}(s_2)+
\cos\theta\ Z'_{j-1,s_2}(s_2)\right)\\
=\frac{\sin\theta}{4cc'}\frac{(-1)^{j}G_{j,j-1}
(\lambda_j^0)A(\lambda_j^0)}
{\sqrt{-\prod_{k\ne j,j-1}(\lambda_j^0-b_k)
\cdot\prod_{k=0}^n(\lambda_j^0-a_k)}}\ .
\end{aligned}
\end{equation}
Now the assertion (2) easily follows from (\ref{eq:z1}) and 
(\ref{eq:z3}). Also, from those formulas we have
\begin{align*}
&g(Z_{j,s_1}(s_2),Z'_{j,s_2}(s_2))=
g(Z_{j,s_1}(s_2),Z'_{j-1,s_2}(s_2))=0\\
& g(Z_{j-1,s_1}(s_2),Z'_{j,s_2}(s_2))=
g(Z_{j-1,s_1}(s_2),Z'_{j-1,s_2}(s_2))=0\ ,
\end{align*}
provided $\theta_{s_1}(s_2)=\pi$. Since the Jacobi fields
$Z_{j,s}$, $Z_{j-1,s}$ belong to the limit of the vector
space $\mathcal Y_j+\mathcal Y_{j-1}$, and since it is
orthogonal to the limit of $\sum_{k\ne j,j-1}\mathcal Y_k$
with respect to the symplectic inner product $\Omega$,
it therefore follows that $Z_{j,s_1}(s_2)=Z_{j-1,s_1}(s_2)=0$. This finishes the proof of the proposition.
\end{proof}
\begin{remark}\label{remark:jac}
For $i$ with $b_i\ne b_{i-1}$ and $b_i\ne 
b_{i+1}$, Propositions \ref{prop:jf1}, \ref{prop:jacreg} 
and Corollary \ref{cor:conj} equally hold for
the Jacobi field $Y_{i,s}(t)$.
\end{remark}

\section{Geodesics starting at a one point}
In this and the subsequent sections we shall assume
that the condition (\ref{cond2}) are satisfied.
Let $p_0\in M$ be an arbitrary point. We may assume without
loss of generality that $p_0$ is
represented by $(x_1,\dots,x_n)=
(x_1^0,\dots,x_n^0)$, where $0\le x_i^0\le\alpha_i/4$ $(1\le i\le n)$.
Let $U^*_{p_0}M$ be the sphere of unit covectors at $p_0$.
We denote by 
\begin{equation*}
t\mapsto\gamma(t,\eta)=
(x_1(t,\eta),\dots,x_n(t,\eta))
\end{equation*}
the geodesic with the initial covector 
$\eta\in U^*_{p_0}M$ at $t=0$.  
The function $x_i(t,\eta)$ is uniquely determined as a smooth 
function when $b_i\ne a_i$ and $b_{i-1}\ne a_{i-1}$
for each $i$. In this case,
the geodesic does not meet $J_i\cup J_{i-1}$, a part of the branch locus.
If $b_i=a_i$, then the geodesic meets 
$J_i$ and one gets more
than one representations for $x_i(t,\eta)$ and 
$x_{i+1}(t,\eta)$ that are
continuous at the branch point and smooth elsewhere. Note
that $t\mapsto f_i(x_i(t,\eta))$ is uniquely determined in any
case.

As before, we put
\begin{equation*}
\sigma_i(t,\eta)=\int_0^t\left|
\frac{df_i(x_i(t,\eta))}{dt}\right|\,dt\ .
\end{equation*}
We shall assign a real number $t_0(\eta)>0$ to each 
$\eta\in U^*_{p_0}M$. First we consider the case which is {\it not}
equal to  any one of the following three cases: (i) the geodesic 
$\gamma(t,\eta)$ is totally contained in the submanifold
$N_n$, i.e., $b_{n-1}= a_n$; (ii)  $\gamma(t,\eta)$ is 
totally contained in the submanifold $N_{n-1}$ and 
$f_n(x_n^0)=a_{n-1}=b_{n-1}<f_{n-1}(x_{n-1}^0)$; and (iii)
$\gamma(t,\eta)$ is 
totally contained in the submanifold $N_{n-1}$ and $p_0\in J_{n-1}$, in particular, $f_n(x_n^0)=a_{n-1}=b_{n-1}=f_{n-1}(x_{n-1}^0)$. Then, 
define $t_0(\eta)$ by the formula
\begin{equation*}
\sigma_n(t_0(\eta),\eta)=2(a_{n-1}^--a_n^+)\ .
\end{equation*}

In the cases (i) and (ii) listed above, we define 
$t_0(\eta)$ as follows:  Let $Y(t)$ 
be the
Jacobi field along the geodesic $\gamma(t,\eta)$ such that
$Y(0)=0$ and $Y'(0)=(\partial/\partial x_n)/
|\partial/\partial x_n|$. Then $t=t_0(\eta)$
is the first positive time such that $Y(t)=0$.
In the case (iii) we define the Jacobi field
$Y(t)$ along the geodesic $\gamma(t,\eta)$ such that
$Y(0)=0$ and $Y'(0)$ is the unit normal vector to $N_{n-1}$. Then $t=t_0(\eta)$ is the first positive time such that
$Y(t)=0$. It is easily seen that $x_n(t_0(\eta),\eta)=
-x_n^0$, or $\frac{\alpha_n}2+x_n^0$ in any case.

It will be proved in Theorem \ref{thm:cut} that 
the time $t=t_0(\eta)$ gives the cut point of
$p_0$ along the geodesic $\gamma(t,\eta)$. In particular,
it will become clear that $t_0(\eta)$ is a continuous function 
of $\eta\in U^*_{p_0}M$ and $p_0\in M$. In this stage,
we shall only prove a partial result.
\begin{prop}\label{prop:conti}
For any $\eta\in U^*_{p_0}M$ and $p_0\in M$, there is a
sequence $\eta_k$ $(k=1,2,\dots)$ of unit covectors such
that the corresponding values $b_1,\dots,b_{n-1}$
of $H_1,\dots,$ $H_{n-1}$ at 
$\eta_k$ and $a_0,\dots,a_n$ are all distinct for each $k$,
and 
\begin{equation*}
\lim_{k\to\infty}\eta_k=\eta,\qquad \lim_{k\to\infty}
t_0(\eta_k)=t_0(\eta)\ .
\end{equation*}
\end{prop}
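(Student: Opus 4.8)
The plan is to realize $t_0$ through the ``generic'' relation $\sigma_n(t_0,\eta)=2(a_{n-1}^--a_n^+)$ and to show that this quantity varies continuously as $\eta$ moves through the degenerate strata, matching the Jacobi-field definitions in the exceptional cases. First I would set up the parametrization of $U^*_{p_0}M$ by the values of the first integrals. Since $p_0=(x_1^0,\dots,x_n^0)$ is fixed and each $f_i(x_i(t))$ oscillates in $[a_i^+,a_{i-1}^-]$, the requirement that $p_0$ lie on the geodesic forces $f_{i+1}(x_{i+1}^0)\le b_i\le f_i(x_i^0)$ for every $i$; hence the image of the continuous map $\eta\mapsto b(\eta)=(H_1(\eta),\dots,H_{n-1}(\eta))$ is the box $\prod_i[f_{i+1}(x_{i+1}^0),f_i(x_i^0)]$, and over a generic value of $b$ there are only finitely many covectors, indexed by the signs $\epsilon_i=\operatorname{sgn}\xi_i$. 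A factor of this box collapses exactly when $f_i(x_i^0)=f_{i+1}(x_{i+1}^0)$, i.e. when $p_0\in J_i$; in every other factor I can push $b_i$ into the open locus where $b_1,\dots,b_{n-1},a_0,\dots,a_n$ are all distinct, holding the signs $\epsilon_i$ and the base point $p_0$ fixed. This produces $\eta_k\to\eta$ with $b(\eta_k)$ generic, and it then remains to prove $t_0(\eta_k)\to t_0(\eta)$.

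Next I would treat the case where $\eta$ itself lies outside the three exceptional cases, so that $t_0(\eta)$ is also given by $\sigma_n(t_0,\eta)=2(a_{n-1}^--a_n^+)$. Here it suffices to show that $\sigma_n(t,\eta)$ is jointly continuous in $(t,\eta)$ and strictly increasing in $t$ across $t=t_0$, so that the defining relation inverts continuously. Joint continuity is a continuity statement for the quadrature integrals of Section \ref{sec:jac}; the only delicate point is that when some $b_j$'s collide in the limit the relevant integrands acquire higher-order poles, and continuity of the threshold is precisely guaranteed by the limit assertion of Proposition \ref{prop:cond2}, which shows that the associated integrals stay finite and vary continuously as distinct $b_j$'s coalesce. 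The interior degeneracies $b_i=a_i$ $(i<n-1)$ and $b_j=b_{j-1}$ are subsumed here, since they leave the defining formula for $t_0$ unchanged.

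The main obstacle is the exceptional cases (i)--(iii), where $t_0(\eta)$ is defined as the first zero of a normal Jacobi field rather than by the $\sigma_n$-threshold, and one must show that the generic values $t_0(\eta_k)$ nonetheless converge to it. The point is that although $2(a_{n-1}^--a_n^+)\to 0$ as $b_{n-1}\to a_n$ (case (i)) while the full amplitude is attained as $b_{n-1}\to a_{n-1}$ (cases (ii), (iii)), the elapsed time $t_0(\eta_k)$ stays finite and positive: near a turning point the density $dt/d\sigma_n$ blows up, in case (i) like $1/\sqrt{(f_n-a_n)(b_{n-1}-f_n)}$, so that the substitution $f_n-a_n=\nu_n^2$ converts a full $f_n$-oscillation into a genuine half-period whose length tends to a finite limit (the elementary integral $\int d\theta$). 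I would identify this limit with the first zero of the limiting Jacobi field through the degeneration analysis of Section \ref{sec:jac}: as $b_{n-1}\to a_n$ the oscillating covector $\partial/\partial\nu_{n-1}$ converges, up to sign, to the normal field $\partial/\partial x_n$ used in the definition, so that the first return controlled by $\sigma_n$ becomes exactly the first vanishing of that Jacobi field, the generic structure of zeros being supplied by Corollary \ref{cor:conj}. Case (i) additionally calls for viewing the limit geodesic inside the lower-dimensional ellipsoid $N_n$ and applying the same analysis to its normal Jacobi field; cases (ii) and (iii) are handled in parallel with $b_{n-1}\to a_{n-1}$, using the degenerate Jacobi field $\pi_*(X_{F_{n-1}})$ attached to $J_{n-1}$ and, in case (iii), the unit normal to $N_{n-1}$. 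Establishing this asymptotic matching of the $\sigma_n$-threshold with the Jacobi-zero condition is the technical heart of the argument.
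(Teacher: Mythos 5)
Your overall strategy --- perturbing the first-integral values $b=(b_1,\dots,b_{n-1})$ into the generic stratum, using continuity of the $\sigma_n$-threshold there, and matching the limit of the generic $t_0$ with the Jacobi-field definitions in the exceptional cases (i)--(iii) via the degeneration $b_{n-1}\to a_n$ or $b_{n-1}\to a_{n-1}$ --- is essentially the paper's, and your discussion of why the elapsed time stays finite and positive as the amplitude $2(a_{n-1}^--a_n^+)$ collapses is the content the paper compresses into the assertion $t_0(\eta)=\lim_{s\to 0}t_0(\eta_s)$. (Your appeal to the limit clause of Proposition \ref{prop:cond2} for the joint continuity of $\sigma_n(t,\eta)$ is misplaced but harmless: that continuity follows from smooth dependence of geodesics on initial data, since $\sigma_n$ is just the total variation of $f_n(x_n(\cdot,\eta))$.)

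There is, however, a genuine gap in your first step: you insist on ``holding the signs $\epsilon_i$ and the base point $p_0$ fixed.'' As you yourself observe, when $p_0\in J_i$ the $i$-th factor of the box $\prod_i[f_{i+1}(x_{i+1}^0),f_i(x_i^0)]$ collapses to the single point $\{a_i\}$, so \emph{every} unit covector based at such a $p_0$ has $b_i=a_i$. Hence no covector over $p_0$ can have $b_1,\dots,b_{n-1},a_0,\dots,a_n$ all distinct, and your conclusion ``this produces $\eta_k\to\eta$ with $b(\eta_k)$ generic'' fails exactly when $p_0\in\bigcup_i J_i$ --- in particular in case (iii), but also whenever $p_0$ lies on an interior $J_i$. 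This is precisely why the proposition speaks of a sequence of unit covectors rather than of covectors at $p_0$, and why the paper's proof takes $\tilde\eta_k\in U^*_{p_k}M$ with base points $p_k\to p_0$ moving off the branch locus before separating the $b_j$'s, followed by a diagonal argument. Your proof needs this additional step; without varying the footpoint the required sequence simply does not exist.
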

\begin{proof}  At each covector $\eta$ which is not of the
cases (i), (ii), (iii), the function $t_0(\eta)$ is clearly
continuous, and we can find such $\{\eta_k\}$. For $\eta$
of the cases (i) or (ii) we note that $t_0(\eta)$ is equal to
the limit
$\lim_{s\to 0}t_0(\eta_s)$, where $\eta_s\in U^*_{p_0}$ is a
one-parameter family of covectors such that (i)
$b_{n-1}=a_n+s^2$, (ii) $b_{n-1}=a_{n-1}+s^2$, and other $b_j$'s are the same value as those
for $\eta=\eta_0$.

Now, for $\eta\in U^*_{p_0}M$ of the cases (ii), (iii),
we first choose $\{\tilde \eta_k\}\in U^*_{p_k}M$ such that
each $\tilde \eta_k$ is of the case (ii), $\tilde\eta_k\to\eta$ $(k\to\infty)$, and the values $b_1,\dots,b_{n-2}$ 
for each $\tilde\eta_k$ and $a_0,\dots,a_n$ are all distinct.
Then, for each $k$ we choose $\eta_k\in U^*_{p_k}M$ in the one-parameter family of covectors given above whose limit
is $\tilde\eta_k$ so that $\eta_k\to\eta$ as $k\to\infty$.
The case (i) is similar.
\end{proof}
For a while, we shall assume that $p_0\not\in J_{n-1}$. 
Put
\begin{align*}
U_+=&\{\eta\in U^*_{p_0}M\ |\ \xi_n(\eta)>0\}\\
U_-=&\{\eta\in U^*_{p_0}M\ |\ \xi_n(\eta)<0\}\ .
\end{align*}
Note that they are well-defined hemispheres under the
assumption $p_0\not\in J_{n-1}$.
Let $\eta'\in U^*_{p_0}M$
be the reflection image of $\eta\in U^*_{p_0}M$ 
with respect to the hyperplane $H_n$ in
$T^*_{p_0}M$ defined by $\xi_n=0$, i.e.,
$\xi_n(\eta')=-\xi_n(\eta)$, $\xi_i(\eta')=
\xi_i(\eta)$ $(1\le i \le n-1)$. 
\begin{prop}\label{prop:refl}
$\gamma(t_0(\eta'),\eta')=\gamma(t_0(\eta),\eta)$ for any $\eta\in U_+$. 
\end{prop}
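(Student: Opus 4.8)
The plan is to prove the stronger statement that $x_i(t_0(\eta'),\eta')=x_i(t_0(\eta),\eta)$ for every $i$, so that the two geodesics arrive at the same point of $R$, hence of $M=R/G$. By Proposition \ref{prop:conti} it is enough to treat the generic situation in which $b_1,\dots,b_{n-1},a_0,\dots,a_n$ are all distinct and $\eta$ is none of the cases (i)--(iii); the general case then follows by approximating $\eta$ by generic covectors $\eta_k$ as in Proposition \ref{prop:conti}, whose reflections $\eta_k'$ are again generic, converge to $\eta'$, and satisfy $t_0(\eta_k')\to t_0(\eta')$ by continuity of $t_0$ at the non-special covector $\eta'$. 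The starting observation is that, since $H_i$ and $E$ depend on the fibre variables only through the squares $\xi_i^2$ by \eqref{integrals}, the covectors $\eta$ and $\eta'$ lie on the same Lagrange torus and share the same $\lambda_i^0=f_i(x_i^0)$; moreover $\epsilon_i(\eta')=\epsilon_i(\eta)$ for $i<n$ while $\epsilon_n(\eta')=-\epsilon_n(\eta)$.

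The decisive point is that both $t_0(\eta)$ and $t_0(\eta')$ are defined by $\sigma_n=2(a_{n-1}^--a_n^+)$, that is, by $f_n$ performing exactly one complete oscillation. For any integrand depending on $f_n$ alone, the integral over one full period, split into its ascending and descending legs, equals $2\int_{a_n^+}^{a_{n-1}^-}(\,\cdot\,)\,d\lambda$ independently of the initial value $\lambda_n^0$ and of the initial sign $\epsilon_n$. Hence the $i=n$ term of \eqref{eq:geodsigma1}, and likewise of \eqref{eq:geodlength}, takes the same value for $\eta$ and for $\eta'$. Subtracting the two copies of \eqref{eq:geodsigma1} at time $t_0$, the common $n$-th terms cancel and the integrands for $i<n$ coincide (same $\epsilon_i,\lambda_i^0$), leaving
\[
\sum_{i=1}^{n-1}\int_{\sigma_i(t_0(\eta'),\eta')}^{\sigma_i(t_0(\eta),\eta)}
\frac{(-1)^iG(f_i)A(f_i)\,d\sigma_i}
{\sqrt{-\prod_{k=1}^{n-1}(f_i-b_k)\prod_{k=0}^n(f_i-a_k)}}=0
\]
for every polynomial $G$ with $\deg G\le n-2$.

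First I would deduce from this that the increments $\sigma_i(t_0(\cdot),\cdot)$ agree for each $i<n$. The enabling fact is that along the orbit the values $f_1,\dots,f_{n-1}$ stay in the mutually disjoint intervals $[a_i^+,a_{i-1}^-]$ forced by the ordering $f_1>b_1>f_2>\dots>f_n$, so the matrix $\bigl[(-1)^iG(f_i)A(f_i)/\sqrt{\cdots}\bigr]$, with $G$ running over a basis of polynomials of degree $\le n-2$, is $\mathrm{diag}(w_i)$ times a Vandermonde in the $f_i$ and is therefore nonsingular; this makes the map from $(\sigma_1,\dots,\sigma_{n-1})$ to the $n-1$ integrals a local diffeomorphism everywhere along the trajectory. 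The hard part, and the \emph{main obstacle}, is to promote this to the global conclusion that the two increment vectors are equal; I expect to control it using the monotonicity inequalities of Proposition \ref{prop:cond2} (this is where condition \eqref{cond2} enters), together with a bound confining each increment to a single oscillation so that the relevant region is one on which the map is genuinely injective. Granting this, \eqref{eq:geodlength} (whose $n$-th term is also common) gives $t_0(\eta)=t_0(\eta')$, and since $\epsilon_i,\lambda_i^0$ agree for $i<n$ we obtain $x_i(t_0(\eta'),\eta')=x_i(t_0(\eta),\eta)$ for all $i<n$.

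It remains to match the last coordinate. After one complete $f_n$-period, $f_n$ returns to $\lambda_n^0$ with the same velocity sign as at $t=0$. If $b_{n-1}\ge a_{n-1}$, then $a_{n-1}^-=a_{n-1}$ and $x_n$ advances monotonically, by $+\alpha_n/2$ for $\eta$ and by $-\alpha_n/2$ for $\eta'$, so that $x_n(t_0(\eta))=x_n^0+\alpha_n/2$ and $x_n(t_0(\eta'))=x_n^0-\alpha_n/2$ agree in $\R/\alpha_n\Z$. If $b_{n-1}<a_{n-1}$, then $x_n$ oscillates in $[-x_n^*,x_n^*]$ with $f_n(x_n^*)=b_{n-1}$, and a direct trace of the two motions shows $x_n(t_0(\eta))=x_n(t_0(\eta'))=-x_n^0$. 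In either case $x_n(t_0(\eta'),\eta')=x_n(t_0(\eta),\eta)$ in $\R/\alpha_n\Z$, and together with the previous paragraph the two geodesics terminate at the same point of $M$, as claimed.
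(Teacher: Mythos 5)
You set the problem up exactly as the paper does: reduce to generic $b_i$'s via Proposition \ref{prop:conti}, cancel the common $n$-th term using $\sigma_n=2(a_{n-1}^--a_n^+)$, and arrive at the identity
\begin{equation*}
\sum_{i=1}^{n-1}\int_{\sigma_i(t_0(\eta'),\eta')}^{\sigma_i(t_0(\eta),\eta)}
\frac{(-1)^iG(f_i)A(f_i)\,d\sigma_i}
{\sqrt{-\prod_{k=1}^{n-1}(f_i-b_k)\cdot\prod_{k=0}^n(f_i-a_k)}}=0
\end{equation*}
for every $G$ with $\deg G\le n-2$. But the step you yourself flag as the ``main obstacle'' and then take for granted is precisely the heart of the proof, and the route you sketch toward it (Vandermonde nonsingularity giving a local diffeomorphism, then some global injectivity to be extracted from Proposition \ref{prop:cond2} and a single-oscillation bound) is neither carried out nor the right tool. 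The paper closes this gap with Lemma \ref{lem:G}: for any partition of $\{1,\dots,n\}$ into nonempty sets $I_1,I_2$ there is a polynomial $G$ of degree $\le n-2$, namely $\pm\prod(\lambda-b_k)$ taken over those $k$ for which $k$ and $k+1$ lie in the same part, such that $(-1)^iG>0$ on $(a_i^+,a_{i-1}^-)$ for $i\in I_1$ and $(-1)^iG<0$ for $i\in I_2$. Applying this with $I_1=I=\{i\le n-1:\sigma_i(t_0(\eta),\eta)>\sigma_i(t_0(\eta'),\eta')\}$ (whose complement contains $n$, hence is nonempty) makes every term of the displayed sum nonnegative and the terms with $i\in I$ strictly positive, forcing $I=\emptyset$; by symmetry all increments coincide. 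Note that no injectivity statement and no confinement to a single oscillation are needed: the sign argument is global for free, because each $f_i$ stays in its band $[a_i^+,a_{i-1}^-]$ for all time, so $(-1)^iG(f_i)$ has constant sign along the entire trajectory. Condition \eqref{cond2} plays no role in this proposition at all.

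The remainder of your outline is consistent with the paper: the cancellation of the $n$-th terms via the full-period integral, the deduction $x_i(t_0(\eta'),\eta')=x_i(t_0(\eta),\eta)$ for $i<n$ from equality of the increments, the trace showing $x_n$ lands at $-x_n^0$ or $x_n^0+\alpha_n/2$ in either case, and the use of \eqref{eq:geodlength} to get $t_0(\eta)=t_0(\eta')$ (which the paper actually defers to Proposition \ref{prop:prime}). But as written the proof is incomplete: without Lemma \ref{lem:G} or an equivalent sign-definite test polynomial, the passage from the vanishing of the integrals to the equality of the $n-1$ increments --- the only nontrivial step --- remains unproved.
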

\begin{proof}
It is enough to show this for covectors $\eta$ such that 
$b_i$'s and $a_j$'s are all distinct.
By (\ref{eq:geod}) we have
\begin{equation*}
\sum_{i=1}^n\int_0^{t_0(\eta)}\frac{(-1)^iG(f_i)A(f_i)}
{\sqrt{-\prod_{k=1}^{n-1}(f_i-b_k)
\cdot\prod_{k=0}^n(f_i-a_k)}}\ 
\left|\frac{df_i(x_i(t,\eta))}{dt}\right|
\ dt=0
\end{equation*}
for any polynomial $G(\lambda)$ of degree $\le n-2$. 
By using the variables $\sigma_i$ given above, this formula
is rewritten as
\begin{equation}\label{eq:geodsigma}
\sum_{i=1}^n\int_0^{\sigma_i(t_0(\eta),\eta)}\frac{(-1)^iG(f_i)A(f_i)}
{\sqrt{-\prod_{k=1}^{n-1}(f_i-b_k)
\cdot\prod_{k=0}^n(f_i-a_k)}}\ 
\ d\sigma_i=0\ .
\end{equation}
Note that
\begin{equation}\label{eq:geodnth}
\begin{gathered}
\int_0^{\sigma_n(t_0(\eta),\eta)}\frac{(-1)^iG(f_n)A(f_n)}
{\sqrt{-\prod_{k=1}^{n-1}(f_n-b_k)
\cdot\prod_{k=0}^n(f_n-a_k)}}\ 
\ d\sigma_n\\
=2\int_{a_n^+}^{a_{n-1}^-}\frac{(-1)^iG(\lambda)A(\lambda)}
{\sqrt{-\prod_{k=1}^{n-1}(\lambda-b_k)
\cdot\prod_{k=0}^n(\lambda-a_k)}}
\ d\lambda\ .
\end{gathered}
\end{equation}
Since the values of each $b_i$ are the same for the two
covectors $\eta$ and $\eta'$, and since $\sigma_n(t_0(\eta),\eta)=2(a_{n-1}^--a_n^+)=\sigma_n(t_0(\eta'),\eta')$, we then have
\begin{equation}\label{eq:prime2}
\begin{aligned}
\sum_{i=1}^{n-1}\int_0^{\sigma_i(t_0(\eta),\eta)}\frac{(-1)^iG(f_i)A(f_i)}
{\sqrt{-\prod_{k=1}^{n-1}(f_i-b_k)
\cdot\prod_{k=0}^n(f_i-a_k)}}\ 
\ d\sigma_i\\
=\sum_{i=1}^{n-1}\int_0^{\sigma_i(t_0(\eta'),\eta')}\frac{(-1)^iG(f_i)A(f_i)}
{\sqrt{-\prod_{k=1}^{n-1}(f_i-b_k)
\cdot\prod_{k=0}^n(f_i-a_k)}}\ 
\ d\sigma_i
\end{aligned}
\end{equation}

Now, let $I$ be the set of $i\in\{1,\dots,n-1\}$ such that
\begin{equation*}
\sigma_i(t_0(\eta),\eta)>\sigma_i(t_0(\eta'),\eta')\ .
\end{equation*}
Then, as we shall prove in the next lemma, there is a
polynomial $G(\lambda)$ of degree $\le n-2$ such that
$(-1)^iG(\lambda)>0$ for $\lambda\in (a_i^+,a_{i-1}^-)$,
$i\in I$, and $(-1)^iG(\lambda)<0$ for $\lambda\in
(a_i^+, a_{i-1}^-)$, $i\not\in I$, if $I\ne \emptyset$. 
With such
$G(\lambda)$, the formula (\ref{eq:prime2}) clearly yields
a contradiction.  Therefore, $I=\emptyset$ and
\begin{equation*}
\sigma_i(t_0(\eta),\eta)=\sigma_i(t_0(\eta'),\eta')\ .
\end{equation*}
for every $1\le i\le n-1$. This indicates
\begin{equation*}
x_i(t_0(\eta),\eta)=x_i(t_0(\eta'),\eta')\ .
\end{equation*}
for any $1\le i\le n$, and therefore
$\gamma(t_0(\eta'),\eta')=\gamma(t_0(\eta),\eta)$\ .
\end{proof}
\begin{lemma}\label{lem:G}
Suppose $b_i$'s and $a_i$'s are all distinct. Let $I_1$ be a
subset of $\{1,\dots,n\}$ and let $I_2$ be its complement.
Assume both $I_1$ and $I_2$ are nonempty. Then there is a
polynomial $G(\lambda)$ of degree $\le n-2$ such that
\begin{equation*}
(-1)^iG(\lambda)
\begin{cases}
>0\quad \text{for } \lambda\in (a_i^+,a_{i-1}^-),\ i\in I_1\\
<0\quad \text{for } \lambda\in (a_i^+,a_{i-1}^-),\ i\in I_2
\end{cases}\ .
\end{equation*}
\end{lemma}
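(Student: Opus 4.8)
The plan is to read the conclusion as a prescription of the sign of $G$ on each of the $n$ pairwise disjoint intervals $(a_i^+,a_{i-1}^-)$, and then to realize that prescription by a product of linear factors, one factor for each place where the sign is forced to change. Set $\delta_i=+1$ if $i\in I_1$ and $\delta_i=-1$ if $i\in I_2$; the requirement becomes exactly $\operatorname{sign}G=(-1)^i\delta_i$ on $(a_i^+,a_{i-1}^-)$. First I would record the elementary geometry of the intervals: since $b_i\ne a_i$ we have $a_i^-<a_i^+$, the intervals $(a_i^+,a_{i-1}^-)$ are disjoint and sit on the line in the order $i=n,\dots,1$ from left to right, and between the $(i+1)$-st and the $i$-th interval there is a genuine gap $(a_i^-,a_i^+)$ for each $1\le i\le n-1$.

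Next I would count the sign changes of the target pattern $\epsilon_i:=(-1)^i\delta_i$. Since $\delta_i\in\{\pm1\}$, a short computation gives
\[
\epsilon_i\ne\epsilon_{i+1}\iff\delta_i=\delta_{i+1},
\]
so the prescribed sign changes across the $i$-th gap precisely when $i$ and $i+1$ lie in the \emph{same} one of $I_1,I_2$. Let $K=\{\,i:1\le i\le n-1,\ \delta_i=\delta_{i+1}\,\}$ collect those gaps. The one place where the hypothesis is used is here: because both $I_1$ and $I_2$ are nonempty, the sequence $\delta_1,\dots,\delta_n$ is not constant, so at least one adjacent pair has $\delta_i\ne\delta_{i+1}$; hence $\#K\le(n-1)-1=n-2$.

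With this the construction is immediate. For each $i\in K$ choose a point $r_i$ in the gap $(a_i^-,a_i^+)$ and put
\[
G(\lambda)=\pm\prod_{i\in K}(\lambda-r_i),
\]
a polynomial of degree $\#K\le n-2$. Its sign changes occur exactly across the gaps indexed by $K$, so $G$ and the target sequence $\epsilon$ have identical sign-change patterns along the $n$ intervals; two sign sequences with the same change-positions coincide up to one global sign, and that sign is pinned down by the choice of $\pm$. This produces a $G$ with the demanded signs on every interval.

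I expect the main obstacle to be conceptual bookkeeping rather than anything analytic: the single real idea is the degree count. A fully alternating prescription would force $n-1$ sign changes and hence degree $n-1$, which is inadmissible; it is exactly the nonemptiness of both $I_1$ and $I_2$ that deletes one forced sign change and lowers the degree to the allowed $n-2$. The routine points to check carefully are the equivalence $\epsilon_i\ne\epsilon_{i+1}\iff\delta_i=\delta_{i+1}$ and the fact that consecutive intervals are separated by nondegenerate gaps in which the $r_i$ can be placed.
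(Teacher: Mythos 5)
Your proof is correct and takes essentially the same route as the paper: both place one linear factor per gap across which the prescribed sign must change (exactly the gaps $i$ with $i$ and $i+1$ in the same one of $I_1,I_2$), and both get the degree bound $\le n-2$ from the nonemptiness of $I_1$ and $I_2$. The only cosmetic difference is that the paper takes the roots to be the $b_k$ themselves (endpoints of the gaps, which still keep constant sign on each open interval) while you take interior points $r_i$ of the gaps.
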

\begin{proof}
Assume $1\in I_1$. We put 
\begin{equation*}
G(\lambda)=-\prod(\lambda-b_k)\ ,
\end{equation*}
where the product are taken over all such 
$k\in\{1,\dots,n-1\}$ that both $k$ and $k+1$ belongs to $I_1$ or that both $k$ and $k+1$ belongs to $I_2$. Since both $I_1$
and $I_2$ are nonempty, it follows that $\deg G\le n-2$.
Also, it is clear that the signs of the function $G(\lambda)$
is different on the two intervals $(a_k^+,a_{k-1}^-)$ and
$(a_{k+1}^+,a_k^-)$ if and only if $\lambda-b_k$ is a factor
of $G(\lambda)$, i.e., $k$ and $k+1$ belong to the same 
group.  Since $-G(\lambda)>0$ on $(a_1^+,a_0^-)$,
it follows that this $G(\lambda)$ has the desired property.
In case $1\in I_2$, then $-G(\lambda)$ possesses the desired
property.
\end{proof}

\begin{prop}\label{prop:prime}
$t_0(\eta)=t_0(\eta')$ for any $\eta\in U^*_{p_0}M$.
\end{prop}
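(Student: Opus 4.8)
The plan is to upgrade the endpoint equality of Proposition~\ref{prop:refl} to an equality of \emph{lengths}, by feeding the turning-point data already extracted there into the length formula (\ref{eq:geodlength}). First I would reduce to a generic covector in $U_+$. Since each $F_j$, and hence each $H_i$, depends on the fibre coordinates only through $\xi_1^2,\dots,\xi_n^2$, the reflection $\xi_n\mapsto-\xi_n$ fixes every value $b_1,\dots,b_{n-1}$; thus $\eta$ and $\eta'$ carry geodesics lying on one and the same Lagrange torus and issuing from the common point $p_0$. If $\xi_n(\eta)=0$ then $\eta'=\eta$ and there is nothing to prove; this also covers the exceptional cases (i) and (ii) in the definition of $t_0$, for in each of them $f_n(x_n^0)$ is a turning value of the $n$-th oscillation, so $\xi_n(\eta)=0$. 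If $\eta\in U_-$ then $\eta'\in U_+$ and $(\eta')'=\eta$, so it suffices to treat $\eta\in U_+$. Finally, no covector of $U_+\cup U_-$ is of the exceptional type (i), (ii) or (iii), so $t_0$ is continuous there (as observed in the proof of Proposition~\ref{prop:conti}); since the covectors for which $b_1,\dots,b_{n-1}$ and $a_0,\dots,a_n$ are all distinct form a dense subset of $U_+$, it is enough to prove $t_0(\eta)=t_0(\eta')$ for such generic $\eta\in U_+$.

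For such $\eta$ the proof of Proposition~\ref{prop:refl} has already produced $\sigma_i(t_0(\eta),\eta)=\sigma_i(t_0(\eta'),\eta')$ for $1\le i\le n-1$, while by the definition of $t_0$ one has $\sigma_n(t_0(\eta),\eta)=2(a_{n-1}^--a_n^+)=\sigma_n(t_0(\eta'),\eta')$; hence all the upper limits in (\ref{eq:geodlength}) agree for the two covectors. Applying (\ref{eq:geodlength}) with $s=0$ and one fixed monic polynomial $\tilde G$ of degree $n-1$ to each geodesic, I would then compare the two sums term by term. For $1\le i\le n-1$ the integrand is literally the same function of $\sigma_i$: the starting value $f_i(x_i^0)$ is common, the sign $\epsilon_i$ of $df_i/dt$ at $t=0$ coincides because $\xi_i(\eta)=\xi_i(\eta')$, and the coefficients $b_k,a_k$ are unchanged; as the limits also agree, these $n-1$ terms contribute identically.

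The one term requiring care---and the main point of the argument---is $i=n$, where $\epsilon_n$ is reversed in passing from $\eta$ to $\eta'$, so that $f_n$ is genuinely a different function of $\sigma_n$ for the two geodesics and the naive termwise comparison breaks down. However, the common interval of integration $[0,2(a_{n-1}^--a_n^+)]$ represents exactly one full oscillation of $f_n$ across $[a_n^+,a_{n-1}^-]$, so by the same reduction that yields (\ref{eq:geodnth}) the $n$-th integral equals twice the corresponding integral in the variable $\lambda$ over $[a_n^+,a_{n-1}^-]$, a quantity depending only on $b_1,\dots,b_{n-1}$ and independent of the starting value $f_n(x_n^0)$ and of the direction $\epsilon_n$. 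Hence the $n$-th terms coincide as well. Summing over $i$, the left-hand side of (\ref{eq:geodlength}) takes the same value for $\eta$ and for $\eta'$, whence $t_0(\eta)=t_0(\eta')$ for generic $\eta\in U_+$, and the general statement follows from the reductions of the first step.
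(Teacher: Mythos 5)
Your proof is correct and takes essentially the same route as the paper's: the paper likewise deduces the result from the length formula (\ref{eq:geodlength}) with a fixed polynomial (yielding its formula (\ref{eq:t0})) combined with the equalities $\sigma_i(t_0(\eta),\eta)=\sigma_i(t_0(\eta'),\eta')$ supplied by Proposition~\ref{prop:refl}. You merely make explicit two points the paper leaves tacit, namely the reduction to generic covectors and the observation that the $i=n$ term is unaffected by the reversal of $\epsilon_n$ because it integrates over one full oscillation, as in (\ref{eq:geodnth}).
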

\begin{proof}
By (\ref{eq:length}) we have
\begin{equation}\label{eq:t0}
t_0(\eta)=\sum_{i=1}^n\int_0^{\sigma_i(t_0(\eta),\eta)}\frac{(-1)^{i+1}A(f_i)\prod_{k=1}^{n-1}(f_i-a_k)}
{2\sqrt{-\prod_{k=1}^{n-1}(f_i-b_k)
\cdot\prod_{k=0}^n(f_i-a_k)}}\ 
\ d\sigma_i
\end{equation}
Since $\sigma_i(t_0(\eta),\eta)=\sigma_i(t_0(\eta'),\eta')$
for any $i$ by Proposition \ref{prop:refl}, it  therefore
follows that $t_0(\eta)=t_0(\eta')$.
\end{proof}
\begin{prop}\label{prop:sigma}
Suppose that the geodesic $\gamma(t,\eta)$ does not totally
contained in any $N_j$ for any $j$. Then,
$\sigma_i(t_0(\eta),\eta)< 2(a_{i-1}^--a_i^+)$ for 
any $i\le n-1$ such that $b_i\ne b_{i-1}$.
\end{prop}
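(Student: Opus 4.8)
The plan is to prove the slightly stronger fact that, for a geodesic $\gamma(t,\eta)$ not totally contained in any $N_j$ and with all $b_i,a_j$ distinct, one has $s_i<T_i$ for \emph{every} $i\le n-1$ simultaneously, where I abbreviate $s_i=\sigma_i(t_0(\eta),\eta)$ and $T_i=2(a_{i-1}^--a_i^+)$; the indices excluded in the statement (those with $b_i=b_{i-1}$) are exactly the ones for which the band $[a_i^+,a_{i-1}^-]$ degenerates and $T_i=0$. The two inputs are the geodesic identity (\ref{eq:geodsigma}), which I write as $\sum_{i=1}^n\Psi_i[G]=0$ for every polynomial $G$ with $\deg G\le n-2$, where $\Psi_i[G]=\int_0^{s_i}(-1)^iG(f_i)A(f_i)\big/\sqrt{-\prod_k(f_i-b_k)\prod_k(f_i-a_k)}\,d\sigma_i$, together with the fact that $s_n=T_n$ by the very definition of $t_0(\eta)$, so that $\Psi_n[G]=2\Phi_n[G]$ by (\ref{eq:geodnth}). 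Here $\Phi_i[G]$ denotes the same integrand integrated once over $[a_i^+,a_{i-1}^-]$ in $d\lambda$.

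The key observation is that for a product $G=c\prod_{k\in S}(\lambda-b_k)$ (with $c=\pm1$) none of the roots $b_k$ lies in the interior of any band $(a_i^+,a_{i-1}^-)$, so the integrand of $\Psi_i[G]$ keeps the constant sign $\epsilon_i^G=\operatorname{sgn}\big((-1)^iG\big)$ throughout the entire oscillation of $f_i$. Since $f_i$, viewed as a function of $\sigma_i$, is a triangle wave of period $T_i$ sweeping $[a_i^+,a_{i-1}^-]$ up and then down, one full period contributes exactly $2\Phi_i[G]$ regardless of phase; hence $\Psi_i[G](s_i)-2\Phi_i[G]=\int_{T_i}^{s_i}(\cdots)\,d\sigma_i$ carries the sign $\epsilon_i^G\cdot\operatorname{sgn}(s_i-T_i)$. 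This turns each overshoot or undershoot of $s_i$ relative to $T_i$ into a controllable sign, the $n$-th term being identically zero because $s_n=T_n$.

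The proof is then by contradiction. Suppose $K=\{\,i\le n-1 : s_i\ge T_i\,\}\ne\emptyset$. I prescribe the sign pattern $\epsilon_i^G=-1$ for $i\in K$ and $\epsilon_i^G=+1$ otherwise, choosing in particular $\epsilon_n^G=+1$, which is harmless since the $n$-th term vanishes. As $K$ and its complement are both nonempty, Lemma \ref{lem:G} produces a polynomial $G=\pm\prod_{k\in S}(\lambda-b_k)$ of degree $\le n-2$ realizing this pattern. With it every term $\Psi_i[G](s_i)-2\Phi_i[G]$ is $\le0$, so $\sum_i\big(\Psi_i[G]-2\Phi_i[G]\big)\le0$; on the other hand this sum equals $-2\sum_i\Phi_i[G]$ by the geodesic identity. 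A short parity computation shows that, for the $G$ delivered by Lemma \ref{lem:G}, the sign $(-1)^{n+\#S}$ attached by Proposition \ref{prop:cond2}(1) to $\sum_i\Phi_i[G]$ makes $\sum_i\Phi_i[G]<0$ equivalent to $\epsilon_n^G=+1$, which I have arranged; hence $-2\sum_i\Phi_i[G]>0$, a contradiction. Therefore $K=\emptyset$, i.e.\ $s_i<T_i$ for all $i\le n-1$.

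Finally, the general case (some $b_i=a_i$, or some $b_j=b_{j-1}$) is reduced to the generic one: I approximate $\eta$ by covectors with all $b,a$ distinct using Proposition \ref{prop:conti}, pass to the limit to obtain $s_i\le T_i$, and upgrade to strict inequality by rerunning the contradiction argument at $\eta$ itself through the limiting form of Proposition \ref{prop:cond2}(1), the degenerate bands ($b_j=b_{j-1}$, $T_j=0$) simply dropping out of every sum. The main obstacle is exactly this sign bookkeeping: one must check that the pattern forced on $S$ by making each $\Psi_i[G]-2\Phi_i[G]\le0$ is simultaneously compatible with the sign that Proposition \ref{prop:cond2}(1) assigns to $\sum_i\Phi_i[G]$—a compatibility that, as the parity computation reveals, hinges entirely on the freedom to set $\epsilon_n^G=+1$—and that in the degenerate and limiting configurations the complementary index set carries only distinct $b_k$'s, so that the limiting inequality still applies and strictness is preserved.
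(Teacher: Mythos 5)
Your generic-case argument is essentially the paper's own proof, built from the same three ingredients: telescoping the geodesic identity (\ref{eq:geodsigma}) against full periods of each $f_i$ (so that the $n$-th residual vanishes because $\sigma_n(t_0(\eta),\eta)=2(a_{n-1}^--a_n^+)$ by definition of $t_0$), the sign-prescribing polynomial of Lemma \ref{lem:G}, and Proposition \ref{prop:cond2}(1) to give the closed-band sum a definite sign. Your sign convention is the global negative of the paper's (you impose $(-1)^iG<0$ on the overshoot bands where the paper imposes $(-1)^iG>0$ there), but the bookkeeping is consistent, and your observation that the sign of $\sum_i\Phi_i[G]$ is exactly $-\epsilon_n^G$ --- so the whole parity count collapses to the freedom of taking $\epsilon_n^G=+1$ on the one band whose residual is identically zero --- is a cleaner packaging of the paper's count of group changes. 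For $b_1,\dots,b_{n-1},a_0,\dots,a_n$ all distinct the proposal is correct.

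The non-generic configurations are where you are thinner than the paper, and two points are flagged but not discharged. First, when $b_j=b_{j-1}$ you must extend the partition into overshoot/undershoot groups to the degenerate indices \emph{before} invoking Lemma \ref{lem:G}, and you must do so in such a way that for each colliding pair at least one of $j-1,\,j$ lands in the factor set $S$ of $G$; otherwise both collided $b$'s lie in the complementary set $J$ of Proposition \ref{prop:cond2}, whose limiting form requires the $b_k$ with $k\in J$ to be mutually distinct, and the inequality you need is simply not available. The paper secures this with the explicit rule that a degenerate index $j$ joins $I_1$ if and only if $j-1\in I_1$ or $j+1\in I_1$, and then verifies that $j-1\in S$ or $j\in S$; you state the required compatibility as something ``one must check'' but do not exhibit the assignment. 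Second, when $b_i=a_i$ the individual residual integrals over bands $i$ and $i+1$ diverge at the shared endpoint $a_i$ (the integrand behaves like $|f-a_i|^{-1}$ there), and only the sum of the two is finite --- the paper points out that the crossing times of $f_i=a_i$ and of $f_{i+1}=a_i$ coincide, so the divergences cancel in pairs. Your plan to ``rerun the contradiction argument at $\eta$ itself'' needs this cancellation for the right-hand side to make sense (and once it is in place, the preliminary limit step giving only $\sigma_i\le 2(a_{i-1}^--a_i^+)$ is redundant). Neither point reflects a wrong idea, but both must be written out before the proof closes outside the generic case.
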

\begin{proof}
The assumption implies that there is no $i$
such that $b_i=a_{i+1}$ or $b_{i+1}=a_i$.
First, suppose that $b_1,\dots,b_{n-1}$ and $a_0,
\dots,a_n$ are all distinct. Let $I_1$ be the set of 
$i\in\{1,\dots,n-1\}$ such that $\sigma_i(t_0(\eta),\eta)
\ge 2(a_{i-1}^--a_i^+)$. Assume that $I_1\ne\emptyset$. 
Put $I_2=\{1,\dots,n\}-I_1$. Note that $n\in I_2$. For these
$I_1$ and $I_2$, let $G(\lambda)$ be the polynomial given in
the proof of Lemma \ref{lem:G}. Then we have
\begin{equation}\label{eq:ineq}
\begin{aligned}
&2\sum_{i=1}^n\int_{a_i^+}^{a_{i-1}^-}
\frac{(-1)^{i}G(\lambda)A(\lambda)\ d\lambda}
{\sqrt{-\prod_{k=1}^{n-1}(\lambda-b_k)
\cdot\prod_{k=0}^n(\lambda-a_k)}}\\
=&-\sum_{i\in I_1}\int_{2(a_{i-1}^--a_i^+)}^{\sigma_i(t_0(\eta),\eta)}\frac{(-1)^iG(f_i)A(f_i)}
{\sqrt{-\prod_{k=1}^{n-1}(f_i-b_k)
\cdot\prod_{k=0}^n(f_i-a_k)}}\ 
\ d\sigma_i\\
+&\sum_{i\in I_2-\{n\}}\int_{\sigma_i(t_0(\eta),\eta)}^{2(a_{i-1}^--a_i^+)}\frac{(-1)^iG(f_i)A(f_i)}
{\sqrt{-\prod_{k=1}^{n-1}(f_i-b_k)
\cdot\prod_{k=0}^n(f_i-a_k)}}\ 
\ d\sigma_i\ .
\end{aligned}
\end{equation}
Here, the polynomial $G(\lambda)$ is of the form
\begin{equation*}
G(\lambda)=
\begin{cases}
-\prod_{k\in K}(\lambda-b_k)\quad (\text{if }1\in I_1)\\
\prod_{k\in K}(\lambda-b_k)\quad (\text{if }1\in I_2)
\end{cases}\ ,
\end{equation*}
where $K$ is the subset of $\{1,\dots,n-1\}$ such that
$k\in  K$ means $k$ and $k+1$ belong to the same
group, i.e., $k,k+1\in I_1$, or $k,k+1\in I_2$.
Therefore, $n-1-\#K$ is the number of such $k\in\{1,\dots,n-1\}$ that $k$ and
$k+1$ belong to the different groups. Since $n\in I_2$,
it follows that
\begin{equation*}
n-1-\#K\ \text{is }
\begin{cases}
\text{ odd\quad if }\ 1\in I_1\\
\text{ even\quad if }\ 1\in I_2.
\end{cases}
\end{equation*}
Therefore, by Proposition \ref{prop:cond2} (1)
it follows that the
first line in the formulas (\ref{eq:ineq}) is
positive, while the second and the third lines are
nonpositive, which is a contradiction. Thus $I_1$ must be
empty, and the proposition follows.

Next, we shall consider the case where $b_{j-1}=b_j$
for several $j$, but other $b_k$ and $a_k$ are all
distinct. In this case, we define the subset $I_1$
of $\{1,\dots,n-1\}$ as follows: For $k$ with $b_{k-1}\ne b_k$, $k\in I_1$ if and only if $\sigma_k
(t_0(\eta),\eta)\ge 2(a_{k-1}^--a_k^+)$; for $k$
with $b_{k-1}=b_k$, $k\in I_1$ if and only if $k-1
\in I_1$ or $k+1\in I_1$. Note that $b_{k-1}<b_{k-2}$
and $b_{k+1}<b_k$ if $b_k=b_{k-1}$.

Then, by the same way as above, we define the sets
$I_2$, $K$ and the polynomial $G(\lambda)$. Put
\begin{equation*}
J=\{j\ |\ b_j<b_{j-1}, 1\le j\le n-1\}\ .
\end{equation*}
Since $k-1\in K$ or $k\in K$ if $b_k=b_{k-1}$,
we then have, instead of \eqref{eq:ineq}, the
following formula:
\begin{equation}\label{eq:ineq2}
\begin{aligned}
&2\sum_{i\in J}\int_{a_i^+}^{a_{i-1}^-}
\frac{(-1)^{i}G(\lambda)A(\lambda)\ d\lambda}
{\sqrt{-\prod_{k=1}^{n-1}(\lambda-b_k)
\cdot\prod_{k=0}^n(\lambda-a_k)}}\\
=&-\sum_{i\in I_1\cap J}\int_{2(a_{i-1}^--a_i^+)}^{\sigma_i(t_0(\eta),\eta)}\frac{(-1)^iG(f_i)A(f_i)}
{\sqrt{-\prod_{k=1}^{n-1}(f_i-b_k)
\cdot\prod_{k=0}^n(f_i-a_k)}}\ 
\ d\sigma_i\\
+&\sum_{i\in I_2\cap J}\int_{\sigma_i(t_0(\eta),\eta)}^{2(a_{i-1}^--a_i^+)}\frac{(-1)^iG(f_i)A(f_i)}
{\sqrt{-\prod_{k=1}^{n-1}(f_i-b_k)
\cdot\prod_{k=0}^n(f_i-a_k)}}\ 
\ d\sigma_i\ .
\end{aligned}
\end{equation}
If $I_1\cap J\ne\emptyset$, then we have a 
contradiction by the same reason as above.

Finally, let us further assume that $b_i=a_i$
for some $i$. In this case,
the times t such that $f_i(x_i(t,\eta))=a_i$ and those such
that $f_{i+1}(x_{i+1}(t,\eta))=a_i$ coincide. Therefore,
in each side of the formula \eqref{eq:ineq} or
\eqref{eq:ineq2}, the sum of the
integrals in $\sigma_i$ and $\sigma_{i+1}$ remains finite,
and the arguments above are also effective in this case.
\end{proof}
\begin{prop}\label{prop:ineqsing}
Suppose that the geodesic $\gamma(t,\eta)$ does not totally
contained in any $N_k$. 
For a fixed $j$ with $b_j=b_{j-1}$, let $\theta_{s_1}(s_2)$
be the value defined in the formula
\eqref{eq:theta} in the previous section.
Then, $\theta_0(t_0(\eta))< \pi$
for such $j$.
\end{prop}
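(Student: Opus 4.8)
The plan is to read $\theta_0(t_0(\eta))<\pi$ as the degenerate analogue, for the index $j$, of Proposition~\ref{prop:sigma}, and to recover the \emph{strict} inequality from the strict deficits of the transverse oscillating coordinates. I would work in the approximating family used in the previous proposition: take $\eta_u\to\eta$ with $H_{j-1}=\lambda_j^0+u^2$ (and every other coincidence among the $b$'s and $a$'s perturbed away), so that for $u\ne0$ all $b_k(u)$ and $a_k$ are distinct and Propositions~\ref{prop:cond2} and~\ref{prop:sigma} apply. For $u\ne0$ the coordinate $f_j$ genuinely oscillates on $[a_j^+,a_{j-1}^-]=[\lambda_j^0,\lambda_j^0+u^2]$, and from $f_j=\lambda_j^0+u^2\sin^2\theta$ one sees that $\theta$ advances by exactly $\pi$ over one full oscillation of $f_j$; since $\theta$ and $\sigma_j$ both increase monotonically, $\theta(u,t)=\pi$ holds precisely when $\sigma_j(t)=2(a_{j-1}^--a_j^+)$. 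Recall also from the construction of $\theta$ that $\int_0^{\sigma_j(t)}(-1)^j\Phi(f_j)\,d\sigma_j\to 2\theta_0(t)\kappa$ as $u\to0$, where $\Phi(\lambda)=G_{j,j-1}(\lambda)A(\lambda)\big/\sqrt{-\prod_{k=1}^{n-1}(\lambda-b_k)\prod_{k=0}^n(\lambda-a_k)}$ and $\kappa$ is the coefficient of $2\theta_{s_1}(s_2)$ in \eqref{eq:theta}; taking $t$ to be a full oscillation gives the limit $2\pi\kappa$. The sign facts I will need are that $\kappa>0$ and, more generally, that $(-1)^i\Phi(f_i)<0$ on the interior of the range of $f_i$ for every $i\ne j$, while $(-1)^j\Phi(f_j)>0$ on $(\lambda_j^0,\lambda_j^0+u^2)$; both follow at once from $(-1)^{i-1}\prod_{k=1}^{n-1}(\lambda-b_k)>0$ on $(a_i^+,a_{i-1}^-)$ combined with the identity $\prod_{k=1}^{n-1}(\lambda-b_k)=G_{j,j-1}(\lambda)(\lambda-\lambda_j^0)^2$.

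The core of the argument is a signed comparison. I apply the orbit identity \eqref{eq:geodsigma1} with the polynomial $G=G_{j,j-1}$ (of degree $n-3\le n-2$) at $t=t_0(\eta_u)$ and subtract it from the reference sum $\mathrm{Ref}(u)=\sum_{i=1}^n\int_0^{2(a_{i-1}^--a_i^+)}(-1)^i\Phi(f_i)\,d\sigma_i$, which is positive by Proposition~\ref{prop:cond2}(1) applied with $I=\{1,\dots,n-1\}\setminus\{j,j-1\}$. Since the orbit identity makes the whole left-hand side vanish, this gives
\begin{equation*}
\int_{\sigma_j(t_0)}^{2(a_{j-1}^--a_j^+)}(-1)^j\Phi(f_j)\,d\sigma_j
=\mathrm{Ref}(u)-\sum_{i\ne j}\int_{\sigma_i(t_0)}^{2(a_{i-1}^--a_i^+)}(-1)^i\Phi(f_i)\,d\sigma_i .
\end{equation*}
The term $i=n$ drops out because $\sigma_n(t_0)=2(a_{n-1}^--a_n^+)$, and for every other $i\ne j$ Proposition~\ref{prop:sigma} gives $\sigma_i(t_0)<2(a_{i-1}^--a_i^+)$ (with equality only for frozen coordinates, which contribute nothing). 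Combined with $(-1)^i\Phi(f_i)<0$, each summand $-\int_{\sigma_i(t_0)}^{2(\cdots)}(-1)^i\Phi(f_i)\,d\sigma_i$ is nonnegative, and strictly positive whenever $f_i$ genuinely oscillates. Hence for every $u\ne0$ the right-hand side equals $\mathrm{Ref}(u)$ plus a sum of nonnegative terms, so the left-hand side is strictly positive.

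Finally I let $u\to0$. Using $t_0(\eta_u)\to t_0(\eta)$ (Proposition~\ref{prop:conti}) and the two limits recalled above, the left-hand side tends to $2\pi\kappa-2\theta_0(t_0(\eta))\kappa=2(\pi-\theta_0(t_0))\kappa$. On the right, $\mathrm{Ref}(u)\to\mathrm{Ref}_0\ge0$, and at least one nondegenerate transverse coordinate contributes a limit that is strictly positive: for instance $l=1$ always oscillates on $[a_1^+,a_0]$ (the geodesic lies in no $N_k$, so $b_1<a_0$), Proposition~\ref{prop:sigma} gives it a strict deficit, and since $l=1$ does not degenerate as $u\to0$ that deficit survives the limit. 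Therefore $2(\pi-\theta_0(t_0))\kappa>0$, and as $\kappa>0$ this yields $\theta_0(t_0(\eta))<\pi$.

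The one genuine obstacle is exactly this passage to the limit. For the single index $j$ the gap $2(a_{j-1}^--a_j^+)$ collapses to $0$, so the strict inequality $\sigma_j(t_0)<2(a_{j-1}^--a_j^+)$ of Proposition~\ref{prop:sigma} degenerates to $\theta_0(t_0)\le\pi$ and cannot by itself give strictness. The whole point of routing the estimate through the positivity of $\mathrm{Ref}$ is that it transfers the strict, nonvanishing deficits of the transverse oscillating coordinates into the angular variable, and it is there that I expect the only real care to be required—chiefly in verifying the interchange of limit and integration near the collapsing interval and in confirming that a surviving strictly positive transverse term is always present.
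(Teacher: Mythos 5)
Your proof is sound in substance, but it reaches the strict inequality by a different mechanism than the paper, and the contrast is worth recording. The paper works directly in the degenerate configuration: it subtracts the $a_j^+,a_{j-1}^-\to\lambda_j^0$ limit of Lemma~\ref{lemma:flat} (with the constant $A(\lambda_j^0)$ in the numerator) from the defining relation \eqref{eq:theta}, so that $2(\pi-\theta_0(t_0))\kappa$ equals minus a sum of deficit integrals (nonpositive by Proposition~\ref{prop:sigma}) minus a full-period sum whose integrand carries the difference quotient $(A(\lambda)-A(\lambda_j^0))/(\lambda-\lambda_j^0)$; that full-period sum is \emph{strictly} negative by the $(n-1)$-dimensional Proposition~\ref{prop:cond2}(1), and this is where the paper gets strictness. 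In your scheme that same full-period sum is exactly $\lim_{u\to0}\mathrm{Ref}(u)$ (after rewriting via the limit of Lemma~\ref{lemma:flat}), but you only use $\mathrm{Ref}(u)>0$, whose limit yields merely $\ge 0$ --- and rightly so, since the ``limit case'' extension of Proposition~\ref{prop:cond2}(1) does \emph{not} apply here: the coalescing indices $j,j-1$ lie in the complement $J$ of your $I$, so $\mathrm{Ref}_0>0$ is not available for free. You instead recover strictness from the surviving deficit of a transverse oscillating index, which is legitimate: Proposition~\ref{prop:sigma} applied to the \emph{limiting} geodesic (its $b_{j-1}=b_j$ case) gives $\sigma_1(t_0(\eta),\eta)<2(a_0^--a_1^+)$, the integrand $(-1)^{1+1}\Phi(f_1)$ is positive there, and since $\lambda_j^0\in(a_j,a_{j-1})$ is bounded away from $[a_1^+,a_0^-]$ the integrand stays uniformly integrable, so the deficit integral has a strictly positive limit. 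Two points you should make explicit rather than leave implicit: the continuity of $u\mapsto\sigma_1(t_0(\eta_u),\eta_u)$ (valid because the geodesic is in none of the cases (i)--(iii) of \S 6, so $t_0$ is continuous there), and the fact that the needed strict deficit is the one for the degenerate geodesic itself, not merely for the perturbations. Net comparison: the paper's strictness is extracted from the monotonicity hypothesis \eqref{cond2} on $A$ via Lemma~\ref{lemma:cond}, yours from the transversal oscillation estimate of Proposition~\ref{prop:sigma}; yours asks slightly less of Proposition~\ref{prop:cond2} at the price of more limit bookkeeping.
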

\begin{proof}
By \eqref{eq:theta} we have
\begin{equation*}
\begin{gathered}
\sum_{\substack{1\le l\le n\\l\ne j}}\int_{0}^{\sigma_l(s)}\frac{(-1)^l
G_{j,j-1}(f_l)A(f_l)\ d\sigma_l}
{|f_l-\lambda_j^0|\sqrt{-\prod_{k\ne j,j-1}(f_l-b_k)
\cdot\prod_{k=0}^n(f_l-a_k)}}\\
+2\theta_{0}(s)\ \frac{(-1)^jG_{j,j-1}(\lambda_j^0)A(\lambda_j^0)}
{\sqrt{\prod_{k\ne j,j-1}(\lambda_j^0-b_k)
\prod_k(\lambda_j^0-a_k)}}\ =0\ .
\end{gathered}
\end{equation*}
Also, taking a limit $a_j^+,a_{j-1}^-\to\lambda_j^0$ in Lemma \ref{lemma:flat}, we have
\begin{equation*}
\begin{gathered}
\sum_{\substack{1\le l\le n\\l\ne j}}\int_{0}^{2(a_{l-1}^--
a_l^+)}\frac{(-1)^l
G_{j,j-1}(f_l)A(\lambda_j^0)\ d\sigma_l}
{|f_l-\lambda_j^0|\sqrt{-\prod_{k\ne j,j-1}(f_l-b_k)
\cdot\prod_{k=0}^n(f_l-a_k)}}\\
+2\pi\ \frac{(-1)^jG_{j,j-1}(\lambda_j^0)A(\lambda_j^0)}
{\sqrt{\prod_{k\ne j,j-1}(\lambda_j^0-b_k)
\prod_k(\lambda_j^0-a_k)}}\ =0\ .
\end{gathered}
\end{equation*}
Therefore we obtain the following formula:
\begin{equation*}
\begin{gathered}
\sum_{\substack{1\le l\le n\\l\ne j}}\int_{\sigma_l(s)}^{2
(a_{l-1}^--a_l^+)}\frac{(-1)^l
G_{j,j-1}(f_l)A(f_l)\ d\sigma_l}
{|f_l-\lambda_j^0|\sqrt{-\prod_{k\ne j,j-1}(f_l-b_k)
\cdot\prod_{k=0}^n(f_l-a_k)}}\\
-\sum_{\substack{1\le l\le n\\l\ne j}}\int_{0}^{2
(a_{l-1}^--a_l^+)}\frac{(A(f_l)-A(\lambda_j^0))\,(-1)^l
G_{j,j-1}(f_l)\ d\sigma_l}
{|f_l-\lambda_j^0|\sqrt{-\prod_{k\ne j,j-1}(f_l-b_k)
\cdot\prod_{k=0}^n(f_l-a_k)}}\\
+2(\pi-\theta_{0}(s))\ \frac{(-1)^jG_{j,j-1}(\lambda_j^0)A(\lambda_j^0)}
{\sqrt{\prod_{k\ne j,j-1}(\lambda_j^0-b_k)
\prod_k(\lambda_j^0-a_k)}}\ =0\ .
\end{gathered}
\end{equation*}
We put $s=t_0(\eta)$.
The first line of this formula is nonpositive by the 
previous proposition. Also, applying the $n-1$-dimensional
version of Proposition \ref{prop:cond2} (1) to the positive 
function 
\begin{equation*}
\left(A(\lambda)-A(\lambda_j^0)\right)/(\lambda-\lambda_j^0)
\ ,
\end{equation*}
the second line is negative. Since $(-1)^jG_{j,j-1}(\lambda_j^0)>0$, 
it thus follows that $\theta_0(t_0(\eta))<
\pi$.
\end{proof}

As a consequence, we have the following proposition.
\begin{prop}\label{prop:noconj}
Suppose that the geodesic $\gamma(t,\eta)$ does not totally
contained in any $N_k$. Then:
\begin{enumerate}
\item There is no conjugate point of $p_0$ along the geodesic
$\gamma(t,\eta)$ in the interval $0<t<t_0(\eta)$.
\item $\gamma(t_0(\eta),\eta)$ is not a conjugate point of
$p_0$ along the geodesic $\gamma(t,\eta)$, unless $b_{n-1}(=H_{n-1}(\eta))=f_n(x_n^0)$.
\item If $b_{n-1}=f_n(x_n^0)$, then $\gamma(t_0(\eta),\eta)$ 
is a conjugate point of $p_0$ along the geodesic 
$\gamma(t,\eta)$ with multiplicity one.
\end{enumerate}
\end{prop}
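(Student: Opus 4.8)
The plan is to identify the conjugate points of $p_0=\gamma(0,\eta)$ with the zeros of the Jacobi fields that vanish at $t=0$, and to locate those zeros by means of the results of \S\ref{sec:jac}. Any Jacobi field orthogonal to $\dot\gamma$ and vanishing at $t=0$ is a linear combination of the fields $Y_{i,0}(t)$ $(1\le i\le n-1)$, since $Y_{i,0}'(0)=V_i(0)$ and the $V_i(0)$ are mutually orthogonal by Proposition \ref{prop:jf1}(1). As each $Y_{i,0}(t)$ lies in $\R V_i(t)$ and the $V_i(t)$ remain mutually orthogonal along $\gamma$ (Proposition \ref{prop:jf1}(1),(3)), a combination $\sum_ic_iY_{i,0}$ vanishes at a time $s$ if and only if $c_iY_{i,0}(s)=0$ for every $i$; equivalently, by Proposition \ref{prop:jf1}(5), every conjugate point is detected by a single index, i.e. by a zero of some $Y_{i,0}$. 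When some $b_j=b_{j-1}$, the block $(Y_{j,0},Y_{j-1,0})$ is replaced by $(Z_{j,0},Z_{j-1,0})$, whose common zeros occur exactly when $\theta_0(\cdot)=\pi$ by the companion proposition of \S\ref{sec:jac}; the same orthogonal-block reasoning applies. Hence it suffices to locate the zeros of the $Y_{i,0}$ and the common zeros of each degenerate pair.

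For (1) fix $0<t<t_0(\eta)$. By monotonicity of each $\sigma_l$ in $t$, Proposition \ref{prop:sigma} for $l\le n-1$ with $b_l\ne b_{l-1}$, and the defining equality $\sigma_n(t_0(\eta),\eta)=2(a_{n-1}^--a_n^+)$ for $l=n$, we have $\sigma_l(t,\eta)\le 2(a_{l-1}^--a_l^+)$ for every $l$. If $0\notin S_i$, Proposition \ref{prop:jacreg} then gives $Y_{i,0}(t)\ne0$. If $0\in S_i$, Corollary \ref{cor:conj}(1) identifies the zeros of $Y_{i,0}$ with the later elements of $S_i$, the first of which is the first return of the relevant oscillating coordinate to its extreme value; this return occurs only when its $\sigma$ has increased by a full period $2(a_{i-1}^--a_i^+)$, which by Proposition \ref{prop:sigma} (or the defining equality, when the coordinate is $f_n$) corresponds to a time $\ge t_0(\eta)$, hence outside $(0,t_0(\eta))$. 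For a degenerate pair, monotonicity of $\theta_0$ and Proposition \ref{prop:ineqsing} give $\theta_0(t)\le\theta_0(t_0(\eta))<\pi$, so $Z_{j,0}(t),Z_{j-1,0}(t)$ are linearly independent. Thus no relevant field vanishes on $(0,t_0(\eta))$, which is (1).

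For (2) and (3) I evaluate at $t=t_0(\eta)$. For $i\le n-2$ the inequality $\sigma_i(t_0(\eta),\eta)<2(a_{i-1}^--a_i^+)$ is strict, so the dichotomy above yields $Y_{i,0}(t_0(\eta))\ne0$, while degenerate pairs are ruled out by $\theta_0(t_0(\eta))<\pi$. The decisive index is $i=n-1$, where $\sigma_n$ attains its boundary value $2(a_{n-1}^--a_n^+)$, and everything hinges on whether $0\in S_{n-1}$. Since $f_n$ oscillates on $[a_n,a_{n-1}^-]$ with $a_{n-1}^-=\min(a_{n-1},b_{n-1})$, the locus $S_{n-1}$ is governed by $f_n$ exactly when $b_{n-1}=a_{n-1}^-$, and then $0\in S_{n-1}$ iff $b_{n-1}=f_n(x_n^0)$. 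If $b_{n-1}\ne f_n(x_n^0)$, then either $0\notin S_{n-1}$, so Proposition \ref{prop:jacreg}, applied at its equality threshold $\sigma_n(t_0(\eta),\eta)=2(a_{n-1}^--a_n^+)$, gives $Y_{n-1,0}(t_0(\eta))\ne0$, or $S_{n-1}$ is governed by $f_{n-1}$, whose first return lies beyond $t_0(\eta)$ by the strict bound on $\sigma_{n-1}$; in either case $Y_{n-1,0}(t_0(\eta))\ne0$, proving (2). If $b_{n-1}=f_n(x_n^0)$, then $0\in S_{n-1}$ and Corollary \ref{cor:conj}(1) makes the first zero of $Y_{n-1,0}$ the first return of $f_n$ to $b_{n-1}$, i.e. the time at which $\sigma_n$ increases by $2(a_{n-1}^--a_n^+)$; this is precisely $t_0(\eta)$, so $\gamma(t_0(\eta),\eta)$ is conjugate. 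Multiplicity one follows because $Y_{i,0}(t_0(\eta))\ne0$ for all $i\ne n-1$, so the orthogonality of the $V_i(t_0(\eta))$ forces the space of Jacobi fields vanishing at both $0$ and $t_0(\eta)$ to be exactly $\R Y_{n-1,0}$.

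The main obstacle is the borderline index $i=n-1$ at the critical time $t_0(\eta)$, where $\sigma_n$ sits exactly at $2(a_{n-1}^--a_n^+)$: Proposition \ref{prop:jacreg} must be invoked at its equality threshold, and the presence or absence of a conjugate point is governed by the single discrete alternative $0\in S_{n-1}$, which has to be identified with the condition $b_{n-1}=f_n(x_n^0)$ and, in that case, matched exactly with the first return time of $f_n$. A secondary, bookkeeping obstacle is to route each admissible configuration of the $b_i$ (all distinct; some $b_i=a_i$; some $b_j=b_{j-1}$) through the appropriate version of the Jacobi-field results of \S\ref{sec:jac}, via Remark \ref{remark:jac} and its companion propositions.
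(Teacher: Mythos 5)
Your proof is correct and follows essentially the same route as the paper: parts (1) and (2) are deduced from the Jacobi-field machinery of \S5 together with Propositions \ref{prop:sigma} and \ref{prop:ineqsing}, and part (3) from Corollary \ref{cor:conj}(1) plus the non-vanishing of the remaining $Y_{j,0}$ (or $Z_{j,0}$), so you are only spelling out the case analysis the paper leaves implicit. One cosmetic point: when $S_i$ is governed by $f_{i+1}$ the relevant period is $2(a_i^--a_{i+1}^+)$ rather than $2(a_{i-1}^--a_i^+)$, but your argument plainly uses the correct quantity (e.g.\ $\sigma_n$ increasing by $2(a_{n-1}^--a_n^+)$ for $i=n-1$), so this is only a notational slip.
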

\begin{proof}
(1) and (2) follow from all results in \S4 and
Propositions \ref{prop:sigma} and 
\ref{prop:ineqsing}. Now, let us prove (3).
Since $f_n(x_n^0)=b_{n-1}$, it follows 
from Corollary \ref{cor:conj} (1) that 
$Y_{n-1,0}(t_0(\eta))=0$. Hence $\gamma(t_0(\eta),\eta)$ 
is a conjugate point of $p_0$ along the geodesic 
$\gamma(t,\eta)$. Now we show that $Y_{j,0}
(t_0(\eta))\ne 0$ (or, $Z_{j,0}(t_0(\eta))\ne 0$)
for any $j\le n-2$. First, suppose that $b_j\ne 
b_{j-1}$ for any $j$. For $k\le n-2$ with
$b_k\ne f_k(x_k^0)$, $f_{k+1}(x_{k+1}^0)$ , we
have $Y_{k,0}(t_0(\eta))\ne 0$ by Propositions 
\ref{prop:sigma} and \ref{prop:jacreg}. If
$b_k= f_k(x_k^0)$ or $f_{k+1}(x_{k+1}^0)$,
then again we have $Y_{k,0}(t_0(\eta))\ne 0$ by
Proposition \ref{prop:sigma} and Corollary
\ref{cor:conj} (1). In case $b_j=b_{j-1}$ for 
some $j$, we also have $Z_{j,0}(t_0(\eta))\ne 0$
and $Z_{j-1,0}(t_0(\eta))\ne 0$ in the same way as above by Proposition \ref{prop:ineqsing}.
\end{proof}
\section{Cut locus (1)}
Let $p_0$ be a point as in \S5.
Let $N$ be the subset of $M$ represented by
$x_n=\frac{\alpha_n}2+x_n^0$ or $-x_n^0$,
which is a submanifold of $M$ diffeomorphic to
the $(n-1)$-sphere if $0\le x_n^0< \alpha_n/4$, and which is a 
submanifold with boundary diffeomorphic to closed $(n-1)$-disk 
if $x_n^0=\alpha_n/4$. Let $t_0(\eta)$ be the value defined
in the previous section.
\begin{thm}\label{thm:cut}
\begin{enumerate}
\item  The cut point of $p_0$ along
the geodesic $\gamma(t,\eta)$ is given by $t=t_0(\eta)$
for any $p_0\in M$ and $\eta\in U^*_{p_0}M$.
\item Suppose $p_0\not\in J_{n-1}$. Then, the assignment
$\eta\mapsto \gamma(t_0(\eta),\eta)$ gives
a homeomorphism from $\overline{U_+}$ to its image
$C(p_0)$, the cut locus of $p_0$, and it gives $C^\infty$ 
embeddings of $U_+$ and $\partial \overline{U_+}$ 
respectively.
In particular, $C(p_0)$ is diffeomorphic to an $(n-1)$-closed 
disk, and it is contained in (the interior of) $N$. Also,
for each $\eta\in \partial \overline{U_+}$, $\gamma(t_0(\eta),\eta)$ is the
first conjugate point of $p_0$ of multiplicity one along
the geodesic $t\mapsto\gamma(t,\eta)$ .
\item Suppose $p_0\in J_{n-1}$. Then the cut locus $C(p_0)$ coincides with the cut locus of
$p_0$ in the totally geodesic submanifold $N_{n-1}$, which is smoothly embedded $(n-2)$-disk
in $J_{n-1}$. For each interior 
point $q$ of $C(p_0)$ there is an $S^1$-family of minimal 
geodesics joining $p_0$ and $q$; the tangent vectors
of those geodesics at $p_0$ form a cone whose orthogonal 
projection to $T_{p_0}J_{n-1}$ is one-dimensional. For each boundary point $q$ of 
$C(p_0)$, there is a unique minimal geodesic from $p_0$ to $q$, 
and along it $q$ is the first conjugate point of $p_0$ of
multiplicity two.
\end{enumerate}
\end{thm}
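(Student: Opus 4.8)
The plan is to prove (1) as an upper bound on the cut time, then to analyse the endpoint map $\Phi(\eta)=\gamma(t_0(\eta),\eta)$, to obtain the matching lower bound (minimality) by a global covering argument which I expect to be the crux, and finally to reduce (3) to the lower-dimensional problem on $N_{n-1}$.

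For $\eta\in U_+$ we have $\xi_n(\eta)>0>\xi_n(\eta')$, so $\eta\ne\eta'$, and by Propositions \ref{prop:refl} and \ref{prop:prime} the two distinct geodesics $\gamma(\cdot,\eta)$ and $\gamma(\cdot,\eta')$ reach the common point $\gamma(t_0(\eta),\eta)$ after the equal lengths $t_0(\eta)=t_0(\eta')$; since two distinct geodesics of equal length through one point cannot both be minimal past that point, the cut time is at most $t_0(\eta)$. For $\eta\in\partial\overline{U_+}$, i.e. $\xi_n(\eta)=0$ (equivalently $b_{n-1}=f_n(x_n^0)$), Proposition \ref{prop:noconj}(3) shows $\gamma(t_0(\eta),\eta)$ is the first conjugate point, again bounding the cut time by $t_0(\eta)$; for the covectors whose geodesics lie in some $N_k$ (cases (i)--(iii) of \S6) the bound follows by continuity using Proposition \ref{prop:conti}.

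Define $\Phi:\overline{U_+}\to M$ by $\Phi(\eta)=\gamma(t_0(\eta),\eta)$; it is continuous because $t_0$ is (Proposition \ref{prop:conti}) and the geodesic flow is, and from $\sigma_n(t_0(\eta),\eta)=2(a_{n-1}^--a_n^+)$ one checks that $x_n(t_0(\eta),\eta)\in\{-x_n^0,\ \alpha_n/2+x_n^0\}$, whence $\Phi(\overline{U_+})\subset N$. For $\eta\in U_+$ the endpoint is not conjugate to $p_0$ (Proposition \ref{prop:noconj}(2), as $b_{n-1}\ne f_n(x_n^0)$): if $d\Phi(w)=0$ for some nonzero $w$ tangent to $U_+$ then a transverse Jacobi field would vanish at $t_0(\eta)$, which is impossible, so $\Phi|_{U_+}$ is an immersion of one $(n-1)$-manifold into another, hence a local diffeomorphism onto an open subset of $N$. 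On $\partial\overline{U_+}\cong S^{n-2}$ exactly one transverse Jacobi field vanishes at $t_0$ (Corollary \ref{cor:conj}, Proposition \ref{prop:noconj}(3)), and the analogous computation shows $\Phi|_{\partial\overline{U_+}}$ is again an immersion.

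The essential point is to upgrade these local statements to global injectivity and to minimality. Put $D=\Phi(\overline{U_+})$ and consider $E:(t,\eta)\mapsto\gamma(t,\eta)$ on $\tilde D=\{(t,\eta):\eta\in U^*_{p_0}M,\ 0\le t<t_0(\eta)\}$. By Proposition \ref{prop:noconj}(1) there is no conjugate point on $\tilde D$, so $E$ is a local diffeomorphism; the oscillation inequalities $\sigma_i(t_0(\eta),\eta)<2(a_{i-1}^--a_i^+)$ of Proposition \ref{prop:sigma}, together with the reflection relations, should show that the image of $E$ avoids $D$ and that the only coincidences $\gamma(t_0(\eta),\eta)=\gamma(t_0(\eta'),\eta')$ lie on $D$ itself. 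One then argues that $E$ is a proper local diffeomorphism onto the connected, simply connected manifold $M\setminus D$ (here $M\cong S^n$ and $D$ is a tame embedded $(n-1)$-disk), hence a diffeomorphism; its injectivity is precisely the assertion that every $\gamma(\cdot,\eta)$ is minimal on $[0,t_0(\eta))$ and that $\Phi|_{U_+}$ is injective. With invariance of domain, $\Phi$ is then a homeomorphism of $\overline{U_+}$ onto $D=C(p_0)$, smooth on $U_+$ and on $\partial\overline{U_+}$, proving (1) and (2). I expect verifying the properness of $E$ and that the geodesics fill $M\setminus D$ bijectively---which rests essentially on the global monotonicity inequalities of Proposition \ref{prop:cond2}---to be the main obstacle.

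For (3), suppose $p_0\in J_{n-1}$, where $U_\pm$ degenerate. Since $N_{n-1}$ is totally geodesic (Lemma \ref{lemma:subm}(4)) and is itself a Liouville manifold of the present type in dimension $n-1$ satisfying (\ref{cond2}), its cut locus is by induction on $n$ a smoothly embedded $(n-2)$-disk, and I would show $C(p_0)$ equals it. Geodesics issuing from $p_0$ transversally to $N_{n-1}$ meet the branch locus $\bigcup_k J_k$ (Lemma \ref{lemma:subm}(3)), so their refocusing is governed by the degenerate Jacobi fields $Z_{j,s},Z_{j-1,s}$ associated with a coincidence $b_j=b_{j-1}$ and by the angle $\theta_{s_1}(s_2)$ of \S5; Proposition \ref{prop:ineqsing} gives $\theta_0(t_0(\eta))<\pi$, so these two fields first vanish simultaneously exactly at $t=t_0(\eta)$. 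The local rotational symmetry of $M$ at a point of $J_{n-1}$, visible in the branched coordinates $(y_{2l-1}^2-y_{2l}^2,2y_{2l-1}y_{2l})$ of \S2, then organizes the initial directions of the minimal geodesics to a fixed interior cut point into a cone projecting onto a single direction of $T_{p_0}J_{n-1}$ --- the asserted $S^1$-family --- while over a boundary point the cone collapses to one geodesic along which the simultaneous vanishing of the two $Z$-fields makes the endpoint conjugate of multiplicity two. Matching these endpoints with the cut locus of $p_0$ in $N_{n-1}$ and running the minimality argument of the previous paragraph inside $N_{n-1}$ then yields (3).
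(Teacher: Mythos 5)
Your outline of the upper bound in (1) (the reflection argument via Propositions \ref{prop:refl} and \ref{prop:prime}, plus Proposition \ref{prop:noconj}(3) on the boundary) and your reduction of (3) to the Liouville manifold $N_{n-1}$ match the paper. But the crux, which you yourself flag as ``the main obstacle'', is left unproved, and the route you sketch for it is not the one that works. You want to show that $E=\mathrm{Exp}_{p_0}$ is a proper local diffeomorphism of $V=\{t\eta: 0\le t<t_0(\eta)\}$ onto $M\setminus D$ and deduce injectivity; but both properness and the statement $E(V)\cap D=\emptyset$ require knowing that a geodesic with $\eta\in U_-$ first meets the hypersurface $N$ at a point \emph{outside} $D=\Phi(\overline{U_+})$, and the oscillation inequalities of Proposition \ref{prop:sigma} alone do not give this. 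The paper's key device, absent from your proposal, is to extend $\Phi$ to \emph{all} of $U^*_{p_0}M$ by sending $\eta\in\overline{U_-}$ to $\gamma(t_-(\eta),\eta)$, the first intersection with $N$, and to prove (via Lemma \ref{lemma:shift} and a delicate $C^1$ analysis across $\partial\overline{U_-}$) that this extended $\Phi$ is a local homeomorphism between two $(n-1)$-spheres, hence a global homeomorphism since $n\ge 3$. It is this global injectivity on the whole unit sphere that simultaneously yields $\Phi(U_-)\cap D=\emptyset$ (whence Lemma \ref{lem}(2) and minimality) and the injectivity of $\Phi|_{\overline{U_+}}$; your argument, which only ever considers $\Phi$ on $\overline{U_+}$, has no mechanism to produce either, and in particular injectivity of $E$ on the open set $V$ would not by itself give injectivity of the boundary map $\Phi|_{\overline{U_+}}$.

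A second concrete gap: you invoke Proposition \ref{prop:noconj}(2) to conclude that $\Phi|_{U_+}$ is an immersion, but that proposition assumes the geodesic is not totally contained in any $N_k$. When $p_0\in N_l$ for some $l\le n-1$ (the paper's cases (II)--(IV)), there are covectors $\eta\in U_+$ whose geodesics stay in $N_l$, and one must separately show that the Jacobi field orthogonal to $N_l$ does not vanish at $t_0(\eta)$; this is Proposition \ref{prop:noconj2}, whose proof occupies all of \S 8 and splits into four sub-cases according to the position of the geodesic relative to $J_l$. Your proposal dismisses these configurations with ``the bound follows by continuity'', which suffices for the upper bound on the cut time in (1) but not for the immersion/embedding claims in (2); continuity cannot rule out a degenerate differential on the exceptional set. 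The cases $x_n^0=0$ and $x_n^0=\alpha_n/4$ likewise need the separate arguments of \S 9 (Proposition \ref{prop:sect7}, and the proof that $C(p_0)$ avoids $J_{n-1}$), which your sketch does not address.
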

In this and the next two sections, we shall prove this theorem.
The proof will be divided into five cases: (I) 
$p_0\not\in N_k$ for any $k$; (II) $0<x_n^0<\alpha_n/4$, but $p_0\in N_l$ for some $l$; (III) $x_n^0=0$; (IV) $x_n^0=\alpha_n/4$, and $p_0\not\in 
J_{n-1}$; (V) $p_0\in J_{n-1}$.
In this section we shall consider the case (I) and prove (1) and (2)
of the theorem in this case. The proofs for the cases (II) $\sim$ (V) will be given in the next
two sections.

For each $\eta\in U_-$, let $t_-(\eta)$ be the
first positive time $t$ such that $x_n(t, \eta)
=-x_n^0$. Define the mapping $\Phi:U_{p_0}^*M\to N$ by
\begin{equation*}
\Phi(\eta)=\gamma(t_0(\eta),\eta)\quad (\eta\in \overline{
U^+});\qquad =\gamma(t_-(\eta),\eta)\quad (\eta\in \overline{
U_-})\ .
\end{equation*}
Then, $\Phi(\eta)\in N$ is the first point that the geodesic
$\gamma(t,\eta)$ meets $N$ for any $\eta$. We shall prove
that $\Phi$ is a homeomorphism. To do so, we need several
lemmas.

Take a point $p'_0$ in such
a way that $p'_0$ is represented as
$(x_1^0,\dots,x_{n-1}^0,\allowbreak x_n^1)$, where
$0\le x_n^1<x_n^0< \alpha_n/4$. Let $U_+'$ be the hemisphere of $U^*_{p'_0}M$ defined by
$\xi_n>0$. We define the mapping 
$\psi: \overline{U_+}\to U'_+$
so that it preserves the values $b_i$ of $H_i$
$(1\le i\le n-1)$, i.e., by 
$\psi(p_0; \xi_1,\dots,\xi_n)
=(p'_0; \tilde\xi_1,\dots,\tilde\xi_n)$, where
\begin{equation*}
\tilde\xi_i=\xi_i\quad (1\le i\le n-1),\qquad
\tilde\xi_n=\sqrt{(-1)^{n-1}\prod_{k=1}^{n-1}
(f_n(x_n^1)-b_k)}\ .
\end{equation*}
Note that $b_k$'s are functions of
$(p_0; \xi_1,\dots,\xi_n)\in \overline{U_+}$.
Since $b_{n-1}\ge f_n(x_n^0)>f_n(x_n^1)$, 
the image $\psi(\overline{U_+})$ is contained in
the interior $U'_+$. Let $N'$ be the
submanifold of $M$ defined by 
$x_n=-x_n^1$, and define the diffeomorphism
$\Psi: N\to N'$ by
\begin{equation*}
\Psi(x_1,\dots,x_{n-1},-x_n^0)=
(x_1,\dots,x_{n-1},-x_n^1).
\end{equation*}
We also define $\tilde\Phi:U_+'\to N'$
in the same way as $\Phi|_{\overline{U_+}}$.
\begin{lemma}\label{lemma:shift}
$\Psi(\Phi(\eta))=\tilde\Phi(\psi(\eta))$
for any $\eta\in\overline{U_+}$.
\end{lemma}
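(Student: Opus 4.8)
The plan is to prove Lemma \ref{lemma:shift} in exactly the same spirit as Proposition \ref{prop:refl}, simply replacing the reflection $\eta\mapsto\eta'$ (which fixes the base point and flips $\xi_n$) by the shift $\eta\mapsto\psi(\eta)$ (which moves the base point along the $x_n$-coordinate and recomputes $\tilde\xi_n$ so as to preserve the values $b_1,\dots,b_{n-1}$ of $H_1,\dots,H_{n-1}$ while keeping the covector of unit length). As there, I would first reduce to those $\eta$ for which $b_1,\dots,b_{n-1},a_0,\dots,a_n$ are mutually distinct, the general case following by continuity: I would approximate an arbitrary $\eta$ by generic $\eta_k$ with $t_0(\eta_k)\to t_0(\eta)$ using Proposition \ref{prop:conti}, note that $\psi$ is continuous since it is given by the explicit formula preserving the $b_i$, and pass the identity $\Psi(\Phi(\eta_k))=\tilde\Phi(\psi(\eta_k))$ to the limit. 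Since we are in case (I), the points $p_0,p'_0$ lie off $N_{n-1}$ and $N_n$, so no geodesic issuing from either is totally contained in $N_{n-1}$ or $N_n$; hence neither $t_0$ nor $\tilde t_0$ ever falls into the exceptional cases (i)--(iii), both are given by the full-oscillation condition $\sigma_n=2(a_{n-1}^--a_n^+)$, and both $\Phi$ and $\tilde\Phi$ are therefore continuous.

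For generic $\eta$ the heart of the matter is the separation of variables in the orbit equation \eqref{eq:geodsigma}. Because $\tilde\xi_i=\xi_i$ for $i\le n-1$ and $p_0,p'_0$ share their first $n-1$ coordinates, the initial data $f_i(x_i(0))$ and the initial signs $\epsilon_i$ agree for $i\le n-1$; thus for those $i$ the integrands and the lower limits in \eqref{eq:geodsigma} are identical for the two geodesics. By the definition of $t_0$ and $\tilde t_0$, in both cases $\sigma_n$ runs over the full interval $[0,2(a_{n-1}^--a_n^+)]$, and by \eqref{eq:geodnth} the corresponding $\sigma_n$-integral equals $2\int_{a_n^+}^{a_{n-1}^-}(\cdots)\,d\lambda$, a quantity depending only on the $b_i$ and $a_j$, which are preserved by $\psi$. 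Subtracting exactly as in \eqref{eq:prime2}, I obtain
\[
\sum_{i=1}^{n-1}\int_0^{\sigma_i(t_0(\eta),\eta)}
\frac{(-1)^iG(f_i)A(f_i)\,d\sigma_i}
{\sqrt{-\prod_{k=1}^{n-1}(f_i-b_k)\cdot\prod_{k=0}^n(f_i-a_k)}}
=\sum_{i=1}^{n-1}\int_0^{\sigma_i(\tilde t_0(\psi(\eta)),\psi(\eta))}
\frac{(-1)^iG(f_i)A(f_i)\,d\sigma_i}
{\sqrt{-\prod_{k=1}^{n-1}(f_i-b_k)\cdot\prod_{k=0}^n(f_i-a_k)}}
\]
valid for every polynomial $G(\lambda)$ of degree $\le n-2$.

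Writing $I$ for the set of $i\in\{1,\dots,n-1\}$ with $\sigma_i(t_0(\eta),\eta)>\sigma_i(\tilde t_0(\psi(\eta)),\psi(\eta))$, the polynomial furnished by Lemma \ref{lem:G} (applied with $I_1=I$ and $I_2=\{1,\dots,n\}\setminus I\ni n$) makes, if $I\neq\emptyset$, the difference of the two sides strictly positive, contradicting the displayed identity; the reverse inequality is excluded symmetrically. Hence $\sigma_i(t_0(\eta),\eta)=\sigma_i(\tilde t_0(\psi(\eta)),\psi(\eta))$ for all $i\le n-1$. Since for those $i$ the map $\sigma_i\mapsto f_i$ (through $\phi_i$) together with the common initial position and velocity sign reconstructs $x_i$ unambiguously, this forces $x_i(t_0(\eta),\eta)=x_i(\tilde t_0(\psi(\eta)),\psi(\eta))$ for $1\le i\le n-1$. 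The $n$-th coordinates are $-x_n^0$ and $-x_n^1$ (or $\tfrac{\alpha_n}2+x_n^0$ and $\tfrac{\alpha_n}2+x_n^1$) by the very definitions of $\Phi$ and $\tilde\Phi$, and $\Psi$ was defined precisely to send the former to the latter while fixing the first $n-1$ coordinates; therefore $\Psi(\Phi(\eta))=\tilde\Phi(\psi(\eta))$.

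The algebra here is a near-verbatim repeat of Proposition \ref{prop:refl}, so I do not expect it to be the obstacle. The delicate point is rather the reduction to the generic case: one must check that $\psi(\eta)$ never meets the degenerate situations (i)--(iii) at $p'_0$, which is guaranteed by the strict inequality $b_{n-1}\ge f_n(x_n^0)>f_n(x_n^1)$, so that $\tilde t_0$ is unambiguously the full-oscillation value and $\tilde\Phi$ is continuous on $\psi(\overline{U_+})$. With that in hand, the continuity furnished by Proposition \ref{prop:conti} secures the passage from generic $\eta$ to all $\eta\in\overline{U_+}$.
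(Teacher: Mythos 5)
Your proposal is correct and follows essentially the same route as the paper: both reduce to generic $\eta$ (all $b_i$, $a_j$ distinct), use the orbit equation \eqref{eq:geodsigma} together with \eqref{eq:geodnth} to cancel the $\sigma_n$-integrals (which depend only on the $b_i$, preserved by $\psi$), obtain the analogue of \eqref{eq:prime2}, and invoke Lemma \ref{lem:G} to force $\sigma_i(t_0(\eta),\eta)=\sigma_i(t_0(\psi(\eta)),\psi(\eta))$ for $i\le n-1$, whence the first $n-1$ coordinates of the endpoints agree and $\Psi$ matches the $n$-th. Your added care about why $\psi(\overline{U_+})\subset U'_+$ and why neither endpoint falls into the exceptional cases (i)--(iii) is consistent with what the paper asserts just before the lemma.
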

\begin{proof}
We write $\psi(\eta)=\tilde \eta$ for simplicity.
For the geodesics $\gamma(t,\eta)$ and $\gamma(t,\tilde\eta)$, we have the equality (\ref{eq:geodsigma}) and the similar one.
Taking the equality (\ref{eq:geodnth}) into
account, we have the similar formula as 
(\ref{eq:prime2}):
\begin{gather*}
\sum_{i=1}^{n-1}\int_0^{\sigma_i(t_0(\eta),\eta)}\frac{(-1)^iG(f_i)A(f_i)}
{\sqrt{-\prod_{k=1}^{n-1}(f_i-b_k)
\cdot\prod_{k=0}^n(f_i-a_k)}}\ 
\ d\sigma_i\\
=\sum_{i=1}^{n-1}\int_0^{\sigma_i(t_0(\tilde\eta),\tilde\eta)}\frac{(-1)^iG(f_i)A(f_i)}
{\sqrt{-\prod_{k=1}^{n-1}(f_i-b_k)
\cdot\prod_{k=0}^n(f_i-a_k)}}\ 
\ d\sigma_i\ .
\end{gather*}
Therefore, in the same way as the proof of
Proposition \ref{prop:refl}, we have
$\sigma_i(t_0(\tilde\eta),\tilde\eta)=
\sigma_i(t_0(\eta),\eta)$ and hence
$x_i(t_0(\tilde\eta),\tilde\eta)=
x_i(t_0(\eta),\eta)$ for any $i\le n-1$.
Thus we have $\gamma(t_0(\tilde\eta),\tilde\eta)=
\Psi(\gamma(t_0(\eta),\eta))$. By the formula
(\ref{eq:t0}) we also have $t_0(\tilde\eta)=
t_0(\eta)$.
\end{proof}
By Proposition \ref{prop:noconj}, we know that
$\Phi|_{U_+}$ is a local diffeomorphism and so
is true for the initial point $p'_0$. 
Therefore it follows from the above lemma that
$\Phi|_{\overline{U_+}}$ is a local 
homeomorphism and $\Phi|_{\partial \overline{U_+}}$ is a local diffeomorphism. For the mapping $\Phi$ on 
$\overline{U_-}$, we have the following
\begin{lemma}
$\Phi|_{\overline{U_-}}$ is a $C^1$ local
diffeomorphism.
\end{lemma}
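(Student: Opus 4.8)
The plan is to treat the open hemisphere $U_-$ and its boundary $\partial\overline{U_-}$ separately, the boundary being the essential difficulty.

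For $\eta\in U_-$ the geodesic leaves $p_0$ with $\xi_n<0$, so $x_n(t,\eta)$ decreases monotonically from $x_n^0$ through $0$ to $-x_n^0$; since $f_n$ stays strictly below $b_{n-1}$ on $[0,t_-(\eta)]$, one has $\dot x_n(t,\eta)\ne0$ there, and $\gamma(t,\eta)$ meets $N$ transversally at $t=t_-(\eta)$. Hence by the implicit function theorem $t_-(\eta)$, and therefore $\Phi|_{U_-}$, is smooth. To see that $d\Phi$ is nonsingular, I would argue with the Jacobi fields of \S\ref{sec:jac}: a tangent vector $v\in T_\eta U^*_{p_0}M$ corresponds to a Jacobi field $Y$ with $Y(0)=0$ and $Y\perp\dot\gamma$, and $d\Phi(v)$ is the projection into $TN$ of $Y(t_-(\eta))+\dot\gamma(t_-(\eta))\,dt_-(v)$. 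If $v\in\ker d\Phi$ then $Y(t_-(\eta))$ is parallel to $\dot\gamma(t_-(\eta))$; being also orthogonal to it, $Y(t_-(\eta))=0$, so $\gamma(t_-(\eta))$ would be conjugate to $p_0$. But $\sigma_n(t_-(\eta),\eta)=2(f_n(x_n^0)-a_n)<2(a_{n-1}^--a_n^+)$ shows $t_-(\eta)<t_0(\eta)$, and Proposition \ref{prop:noconj} excludes conjugate points on $(0,t_0(\eta))$; thus $Y\equiv0$ and $\Phi|_{U_-}$ is a smooth local diffeomorphism.

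The boundary is where the argument must be refined: for $\eta\in\partial\overline{U_-}$ one has $f_n(x_n^0)=b_{n-1}$, so $t_-(\eta)=t_0(\eta)$, the crossing occurs at a turning point of $x_n$ (where $\dot x_n=0$), and by Proposition \ref{prop:noconj}(3) the endpoint is a conjugate point of multiplicity one; so both the implicit function theorem and the kernel argument fail in the single ``radial'' direction. To desingularize I would use the coordinate $s\ge0$ with $s^2=b_{n-1}-f_n(x_n^0)$: this is a smooth nonnegative function on $U^*_{p_0}M$ vanishing to exactly second order along the equator and equal to $\xi_n^2$ times a positive smooth factor, so $s$ is a smooth boundary-defining coordinate on $\overline{U_-}$. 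The only term of the defining relations (\ref{eq:geodsigma1}) that can fail to be smooth as $s\to0$ is the $i=n$ integral, whose singularity at $f_n=b_{n-1}$ approaches the endpoint $f_n=f_n(x_n^0)$ of the integration range. Substituting $f_n=f_n(x_n^0)-v$ recasts it as $\int_0^{C}g(v,s)\,(s^2+v)^{-1/2}\,dv$ with $g$ smooth, which is $C^1$ (in fact smooth) in $s\ge0$; hence $t_-(\eta)$ and $\Phi|_{\overline{U_-}}$ extend $C^1$ up to $\partial\overline{U_-}$.

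It remains to check that $d\Phi$ stays nonsingular on $\partial\overline{U_-}$. Since $\Phi|_{\overline{U_-}}$ is $C^1$ its Jacobian is continuous up to the boundary, and on the $(n-2)$-dimensional boundary $\Phi|_{\partial\overline{U_-}}=\Phi|_{\partial\overline{U_+}}$ is already a diffeomorphism onto its image (the previous lemma), hence an immersion into $N$; so it suffices to show that the transverse derivative $\partial\Phi/\partial s|_{s=0}$ is nonzero and not tangent to $\Phi(\partial\overline{U_-})$. This is the crux, and the step I expect to be the main obstacle: the vanishing of the Jacobi field at $t_0$ along the conjugate direction is exactly compensated by the $s^{-1}$-type contribution of the reparametrization $t_-(\eta)$, so that differentiating the integral $\int_0^{C}g(v,s)(s^2+v)^{-1/2}\,dv$ in $s$ leaves a finite nonzero (linear-in-$s$ leading) term. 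I would read off this term from the substitution above and show its coefficient does not vanish by the same sign considerations that yield the strict inequalities of Proposition \ref{prop:cond2}. Granting this, $d\Phi$ is injective on all of $\overline{U_-}$, and a $C^1$ map with injective differential on a compact manifold-with-boundary is a $C^1$ local diffeomorphism, which finishes the proof.
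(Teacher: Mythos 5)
Your overall strategy is the same as the paper's: handle the open hemisphere by the no-conjugate-point argument (Proposition \ref{prop:noconj}), then desingularize at the equator with the square-root coordinate $s$ with $s^2=H_{n-1}(\eta)-f_n(x_n^0)$ (this is exactly the paper's $\nu_{n-1}$), observe that the only new singularity in the defining relations is the $i=n$ integral, and reduce everything to showing that the transverse derivative $\partial\Phi/\partial s|_{s=0}$ is finite, nonzero and transverse to $\Phi(\partial\overline{U_-})$. The interior argument, the choice of $s$, and the local analysis $\int_0^C g(v,s)(s^2+v)^{-1/2}\,dv$ are all correct and match the paper.

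The gap is that you stop exactly at the step you yourself call the crux, and your indication of how to close it points at the wrong tool. What has to be shown is that, upon differentiating the relation $\sum_l\int_0^{\sigma_l(t_-(\eta_s),\eta_s)}(-1)^lG_{n-1}(f_l)A(f_l)\bigl(-\prod_k(f_l-b_k)\prod_k(f_l-a_k)\bigr)^{-1/2}d\sigma_l=0$ in $s$ and letting $s\searrow0$, one gets $0=\bigl(\partial_s t_-|_{s=0}\bigr)\,\beta(\dot\gamma(t_-(\eta_0),\eta_0))+4\epsilon'_n(-1)^nG_{n-1}(b_{n-1})A(b_{n-1})\bigl(-\prod_{k\ne n-1}(b_{n-1}-b_k)\prod_k(b_{n-1}-a_k)\bigr)^{-1/2}$, because the Jacobi term $c_sY_{n-1,0}(t_-(\eta_s))$ disappears in the limit (the endpoint is conjugate in exactly that direction) while the two boundary contributions of your substitution survive. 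Two separate nonvanishing statements are then needed: (i) the explicit limit term is nonzero --- this is immediate from $G_{n-1}(b_{n-1})=\prod_{k\ne n-1}(b_{n-1}-b_k)\ne0$ and $A>0$, and requires no sign considerations of the type in Proposition \ref{prop:cond2}; and (ii) the coefficient $\beta(\dot\gamma(t_-(\eta_0),\eta_0))$ of $\partial_st_-$ is nonzero. Point (ii) is the one you do not address at all: the paper proves it by comparing $\beta$ with $\flat(\dot\gamma)$ and writing $-\beta(\dot\gamma)$ as a positive combination of the terms $(f_l-b_{n-1})^{-1}$ over $l\le n-1$ (using that $\dot x_n=0$ at the turning point and $f_l>b_{n-1}$ for $l\le n-1$), which pins it between $1/(b_1-b_{n-1})$ and $1/(b_{n-2}-b_{n-1})$. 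This is an elementary positivity argument and has nothing to do with the monotonicity condition (\ref{cond2}) or Proposition \ref{prop:cond2}, so "the same sign considerations that yield the strict inequalities of Proposition \ref{prop:cond2}" would not produce it. Until (ii) is supplied, you cannot conclude that $\partial_st_-|_{s=0}$ is finite and nonzero, which is what simultaneously gives the $C^1$ extension of $t_-$ and the nondegeneracy of $d\Phi$ in the transverse direction; the rest of your reduction (linear independence from the tangential derivatives $Y_{j,0}(t_-)+c_\eta\dot\gamma$, $j\le n-2$) is fine.
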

\begin{proof}
By Proposition \ref{prop:noconj} and by the above 
observation, we know that
$\Phi|_{U_-}$ and $\Phi|_{\partial\overline{U_-}}$ $(=\Phi|_{\partial\overline{U_+}})$ are $C^\infty$ immersions. 
Let $\{\eta_s\}$
be a one-parameter family of unit covectors at
$p_0$ such that $\eta_s\in U_-$ $(s>0)$, $\eta_0
\in \partial\overline{U_-}$, and $\dot{\eta}_s
=\left(\partial/\partial \nu_{n-1}\right)/
|\partial/\partial \nu_{n-1}|$, where the variable
$\nu_{n-1}$ is the one defined in 
\S\ref{sec:jac}. 
We shall show that $\Phi|_{\overline{U_-}}$ is of class $C^1$ and a local diffeomorphism at $\eta_0$.

Differentiating the equality
\begin{equation*}
\sum_{l=1}^n\int_0^{\sigma_l(t_-(\eta_s),\eta_s)}\frac{(-1)^{l}G_{n-1}(f_l)A(f_l)\ d\sigma_l}
{\sqrt{-\prod_{k=1}^{n-1}(f_l-b_k)
\cdot\prod_{k=0}^n(f_l-a_k)}}=0
\end{equation*}
in $s$, one obtains
\begin{equation}\label{eq:t1}
\begin{gathered}
0=\beta(c_s\,Y_{n-1,0}(t_-(\eta_s))+\frac{\partial}{\partial s}
t_-(\eta_s)\cdot \dot\gamma(t_-(\eta_s),\eta_s))\\
-\nu_{n-1}\sum_{l=1}^n\int_{0}^{\sigma_l(t_-(\eta_s),\eta_s)}\frac{(-1)^iG_{n-1}(f_l)A(f_l)\ d\sigma_l}
{(f_l-b_{n-1})\sqrt{-\prod_{k=1}^{n-1}(f_l-b_k)
\cdot\prod_{k=0}^n(f_l-a_k)}}\ ,
\end{gathered}
\end{equation}
where $c_s=\pm |\partial/\partial \nu_{n-1}|$ at
$\eta_s$ and $\beta$ is the 1-form;
\begin{equation*}
\beta=\sum_{l=1}^{n-1}\frac{\epsilon'_l(-1)^lG_{n-1}(f_l(x_l))A(f_l(x_l))}
{\sqrt{-\prod_{k=1}^{n-1}(f_l(x_l)-b_k)
\cdot\prod_{k=0}^n(f_l(x_l)-a_k)}}\ d(f_l(x_l))\ .
\end{equation*}
Then, taking the limit $s\searrow 0$, we have
\begin{equation*}
\begin{gathered}
0=\frac{\partial}{\partial s}
t_-(\eta_s)\big|_{s=0}\ \beta(\dot\gamma(t_-(\eta_0),\eta_0))\\
+\ \frac{4\epsilon'_{n}(-1)^{n}G_{n-1}(b_{n-1})A(b_{n-1})}
{\sqrt{-\prod_{k\ne n-1}(b_{n-1}-b_k)
\cdot\prod_{k=0}^n(b_{n-1}-a_k)}}\ .
\end{gathered}
\end{equation*}
Noting that the covector $\flat(\dot\gamma(t_-(\eta_0),\eta_0))$
is equal to 
\begin{equation*}
\frac12\sum_{l=1}^{n-1}\frac{\epsilon'_l(-1)^{l+1}A(f_l(x_l))
\prod_{k=1}^{n-1}(f_l(x_l)-b_k)}
{\sqrt{-\prod_{k=1}^{n-1}(f_l(x_l)-b_k)
\cdot\prod_{k=0}^n(f_l(x_l)-a_k)}}\ d(f_l(x_l))
\end{equation*}
at $\gamma(t_-(\eta_0),\eta_0)$, we see that
\begin{equation*}
\frac1{b_1-b_{n-1}}<-\beta(\dot\gamma(t_-(\eta_0),\eta_0))<\frac1{b_{n-2}-b_{n-1}}\ .
\end{equation*}
This indicates that $(\partial/\partial s)t_-(\eta_s)|_{s=0}$
is finite and nonzero. 

Also, by similar formulas to \eqref{eq:t1}, the 
derivatives of $\gamma(t_-(\eta),\eta)$
by the normalized $\partial/\partial H_j$ $(j\le 
n-2)$ are of 
the form $Y_{j,0}(t_-(\eta))
+c_\eta\dot\gamma(t_-(\eta)\eta)$ (or 
$Z_{j,0}(t_-(\eta))+c_\eta\dot\gamma(t_-(\eta)\eta)$) 
$\in T_{\gamma(t_-(\eta),\eta)}N$,
which are continuous in $\eta$ near the boundary
$\partial\overline{U_-}$.
Therefore the mapping $\Phi|_{\overline{U_-}}$ is of class $C_1$ and the lemma follows.
\end{proof}
The above lemma implies that $\Phi|_{\overline{U_-}}$ is a
local homeomorphism. Thus, combined with the above result,
we see that $\Phi: U_{p_0}^*M\to N$ is a local
homeomorphism. Since both $U_{p_0}^*M$ and $N$ are homeomorphic to the $(n-1)$-sphere, and since $n\ge 3$, it
therefore follows that $\Phi$ is really a homeomorphism.

We shall prove that the image
of the map $\overline{U_+}\ni\eta\mapsto \gamma(t_0(\eta),\eta)$ is just the cut locus of $p_0$. Let us temporarily denote this image by $\mathcal C$. Note that,
for any $\eta\in U^*_{p_0}M$, the cut point of $p_0$ along
the geodesic $\gamma(t,\eta)$ will appear at
$t\le t_0(\eta)$, because of Propositions \ref{prop:prime}
and \ref{prop:refl}.
In particular, putting 
\begin{equation*}
V=\{t\eta\in T^*_{p_0}M\ |\ \eta\in U^*_{p_0}M,\ 
0\le t<t_0(\eta)\}\ ,
\end{equation*}
we have the following lemma. Put Exp$_{p_0}(t\eta)=\gamma(t,\eta)$.
\begin{lemma}\label{lem}
\begin{enumerate}
\item $\text{Exp}_{p_0}:\overline{V}\to M$ is surjective.
\item $\text{Exp}_{p_0}(V)\cap \mathcal C=\emptyset$.
\end{enumerate}
\end{lemma}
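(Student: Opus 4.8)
The plan is to treat the two assertions separately, relying on the homeomorphism $\Phi\colon U^*_{p_0}M\to N$ just constructed, on the characterization of $\Phi(\eta)$ as the \emph{first} point at which $\gamma(\cdot,\eta)$ meets $N$, and on the fact (Propositions \ref{prop:prime} and \ref{prop:refl}) that along every geodesic the cut point occurs no later than $t_0(\eta)$.

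For (1) I would invoke compactness of $M$. Given $q\in M$, choose a minimal geodesic from $p_0$ to $q$, say $\gamma(\cdot,\eta)$ with $d=d(p_0,q)$, so that $\gamma(d,\eta)=q$. Minimality forces $d$ to be at most the cut time along $\gamma(\cdot,\eta)$, which is in turn $\le t_0(\eta)$; since $0\le d\le t_0(\eta)$ we get $d\eta\in\overline V$ and $\mathrm{Exp}_{p_0}(d\eta)=q$. This step is routine and needs no information about $t_0$ beyond the cut-point bound.

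For (2), since $\mathcal C=\Phi(\overline{U_+})\subset N$ and $\Phi$ is injective, the open hemisphere image $\Phi(U_-)$ is disjoint from $\mathcal C$; hence it suffices to show that any point $\gamma(t,\eta)$ with $0\le t<t_0(\eta)$ that happens to lie on $N$ in fact lies in $\Phi(U_-)$. If $\eta\in\overline{U_+}$, the first meeting of $\gamma(\cdot,\eta)$ with $N$ is $\Phi(\eta)=\gamma(t_0(\eta),\eta)$, so $\gamma(t,\eta)\notin N$ for $0\le t<t_0(\eta)$ and there is nothing to check. If $\eta\in U_-$, the first meeting occurs at $t_-(\eta)<t_0(\eta)$ and equals $\Phi(\eta)\in\Phi(U_-)$; the one remaining thing is to verify that $\gamma(\cdot,\eta)$ does \emph{not} meet $N$ again in the open interval $(t_-(\eta),t_0(\eta))$.

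This last verification is the crux, and it reduces to tracking the single coordinate $x_n(t,\eta)$, whose behaviour is governed by the oscillation of $f_n(x_n)$ in $[a_n^+,a_{n-1}^-]$ together with the normalization $\sigma_n(t_0(\eta),\eta)=2(a_{n-1}^--a_n^+)$. I would compute the value of $\sigma_n$ at the successive turning points and at each crossing of the two loci $x_n=-x_n^0$ and $x_n=\alpha_n/2+x_n^0$ that define $N$, distinguishing the case in which $L_n$ is a proper interval (so $x_n$ oscillates) from the case in which $L_n$ is the whole circle (so $x_n$ moves monotonely). In both cases one finds that, starting from $x_n^0$ and moving with $\xi_n<0$, after the first crossing at $t_-(\eta)$ the geodesic does not return to either locus defining $N$ until $\sigma_n$ attains $2(a_{n-1}^--a_n^+)$, that is, until $t=t_0(\eta)$. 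Hence there is no meeting in $(t_-(\eta),t_0(\eta))$, which completes (2). The main obstacle is exactly this crossing-count, together with the degenerate boundary covectors $\eta\in\partial\overline{U_+}$ with $\xi_n=0$ (where $t_-$ and $t_0$ coalesce); I expect to dispatch these by a limiting argument using the continuity and injectivity of $\Phi$ already established.
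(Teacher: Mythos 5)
Your proposal is correct and follows essentially the same route as the paper: part (1) is the identical minimality argument, and part (2) reduces, exactly as in the paper, to the inclusion $\mathcal C\subset N$, the injectivity of $\Phi$ (so $\Phi(U_-)\cap\Phi(\overline{U_+})=\emptyset$), and the observation that a geodesic with $\eta\in\overline{U_+}$ first meets $N$ at $t_0(\eta)$ while one with $\eta\in U_-$ meets $N$ only at $t_-(\eta)$ before $t_0(\eta)$. The crossing-count via the oscillation of $f_n(x_n)$ that you flag as the crux is precisely the step the paper compresses into the phrase ``Thus we have $\eta\in U_-$ and $T=t_-(\eta)$,'' and your outline of it (and of the degenerate case $\xi_n=0$) reaches the correct conclusions.
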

\begin{proof}
Let $q\in M$ be any point $(\ne p_0)$ and let 
$\gamma(t,\eta)$
$(0\le t\le T)$ be a minimal geodesic joining $p_0$ and $q$
$(\eta\in U^*_{p_0}M)$. Since $T\le t_0(\eta)$, (1) follows.
Next, assume that there is some $\eta\in U^*_{p_0}M$ and
$0<T<t_0(\eta)$ such that $\gamma(T,\eta)\in \mathcal C$.
Then, $x_n(T,\eta)=-x_n^0$ or $\frac{\alpha_n}2+x_n^0$.
Note that, if $\eta\in \overline{U_+}$, then $t=t_0(\eta)$ 
is the first positive time when $x_n(T,\eta)=-x_n^0$ or 
$\frac{\alpha_n}2+x_n^0$. Thus we have $\eta\in U_-$ and 
$T=t_-(\eta)$. But, as we have proved in the previous lemma, $\gamma(T,\eta)
\not\in \mathcal C$ in this case, a contradiction. Thus
(2) follows.
\end{proof}

Fix $\eta\in U_{p_0}^*M$ and suppose that
the cut point of $p_0$ along the geodesic $\gamma(t,\eta)$ 
appear before $t=t_0(\eta)$, i.e., the geodesic segment
$\gamma(t,\eta)$ $(0\le t\le t_0(\eta))$ is no longer minimal.
Then there is another minimal geodesic $\gamma(t,\bar\eta)$ 
$(0\le t\le T)$
joining $p_0$ and $q=\gamma(t_0(\eta),\eta)$, $\bar\eta\in U^*_{p_0}M$.

Since the geodesic segment $\gamma(t,\bar\eta)$ 
$(0\le t\le T)$ is minimal, we have $T\le t_0(\bar\eta)$.
Also, since $\gamma(T,\bar\eta)=q\in\mathcal C$,
we have $T=t_0(\bar\eta)$ by Lemma~\ref{lem} (2).
Then, by the injectivity of $\Phi$ we have $\bar\eta=
\eta$ or $\eta'$. But this implies that the geodesic segment
$\gamma(t,\eta)$ $(0\le t\le t_0(\eta))$ is minimal, a
contradiction. Thus $t=t_0(\eta)$ gives the cut point of 
$p_0$ along the geodesic $\gamma(t,\eta)$. This completes the 
proof of (1) and (2) of the theorem
in the case where $0<x_i^0<\alpha_n/4$ for any $i$.

\section{Cut locus (2)}
In this section, we shall give a proof of Theorem \ref{thm:cut} for the case (II) described in the
previous section. The cases (III) $\sim$ (V) will be considered in the next section. 
Note that the statement (1) of the theorem holds for 
any $p_0$ and any $\eta\in U^*_{p_0}M$, which is a 
consequence of the results in the previous section, 
Proposition \ref{prop:conti}, and the continuous dependence 
of cut points on the initial covectors. Thus we shall prove
(2) for the cases (II) $\sim$ (IV) and (3) for the case (V).

Now, let us consider the case (II); $0<x_n^0<\alpha_n/4$ and $p_0\in N_l$ for some
$l\le  n-1$. As in the previous section, we shall show that
$\Phi:U_{p_0}^*M\to N$ is a homeomorphism. 
\begin{prop}\label{prop:noconj2}
Suppose $p_0\in N_l$ and let $\eta\in U_{p_0}^*M$ be a 
covector such that the geodesic $\gamma(t,\eta)$ is totally
contained in $N_l$.  Let $Y_l(t)$ be a nonzero Jacobi field along
the geodesic $\gamma(t,\eta)$ such that $Y_l(0)=0$ and
$Y_l(t)$ is orthogonal to $N_l$ everywhere. Then,
$Y_l(t_0(\eta))\ne 0$.
\end{prop}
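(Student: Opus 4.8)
The plan is to deduce the non-vanishing from the same inner-product computation that underlies Proposition~\ref{prop:jacreg}, now carried out in the single direction normal to $N_l$. Since $N_l$ is totally geodesic (Lemma~\ref{lemma:subm}(4)) and $\gamma(t,\eta)\subset N_l$, the normal line bundle along $\gamma$ is parallel, so $Y_l(t)=h(t)e(t)$ for a parallel unit normal field $e(t)$ and a scalar $h$; thus $Y_l(t_0(\eta))\neq0$ is equivalent to $g\bigl(Y_l(t_0(\eta)),Z'(t_0(\eta))\bigr)\neq0$ for any Jacobi field $Z$ with $Z'(t_0(\eta))$ normal, and it suffices to compute one such symplectic pairing.

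To produce $Y_l$ within the machinery of \S\ref{sec:jac}, I would realize it as a limit of ordinary Jacobi fields. As noted at the start of \S\ref{sec:jac}, $\gamma\subset N_l$ forces $b_{l-1}=a_l$ (with $f_l\equiv a_l$) or $b_{l+1}=a_l$ (with $f_{l+1}\equiv a_l$); by the reflection symmetry of the construction the two cases are parallel, so assume $b_{l-1}=a_l$ and $b_l<a_l$. I would then deform $\gamma$ through the family $\gamma_u$ whose first integrals coincide with those of $\gamma$ except that $b_{l-1}(u)=a_l+u^2$. For $u>0$ the coordinate $f_l$ genuinely oscillates in the small interval $[a_l,a_l+u^2]$, the deformation field points in the $\partial/\partial x_l$ direction normal to $N_l$, and the corresponding Jacobi field of the type studied in \S\ref{sec:jac} converges, up to a nonzero scalar, to $Y_l$ as $u\to0$.

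With this at hand I would compute $g\bigl(Y_l(t_0(\eta)),Y_l'(t_0(\eta))\bigr)$ by the method of Proposition~\ref{prop:jacreg}. Because $t_0(\eta)$ is normalized by one full oscillation of $f_n$, i.e.\ $\sigma_n(t_0(\eta),\eta)=2(a_{n-1}^--a_n^+)$, the correct identity to differentiate is the full-period form \eqref{eq:geod3} with test polynomial $G_{l-1}$, yielding the analogue of \eqref{eq:diff3} along $\gamma_u$. Using Proposition~\ref{prop:sigma} to control the other coordinates (each stays within a single oscillation up to $t_0$), the pairing is expressed, in the limit $u\to0$, as a nonpositive integral term plus a boundary term proportional to $\frac{\partial}{\partial b_{l-1}}\sum_i\int_{a_i^+}^{a_{i-1}^-}\frac{(-1)^iG_{l-1}(\lambda)A(\lambda)\,d\lambda}{\sqrt{-\prod_k(\lambda-b_k)\prod_k(\lambda-a_k)}}$, which is strictly positive by Proposition~\ref{prop:cond2}(2). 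Hence the pairing cannot vanish and $Y_l(t_0(\eta))\neq0$.

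I expect the delicate point to be the degenerate limit $u\to0$, exactly as in the case $b_j=b_{j-1}$ of \S\ref{sec:jac} and in Proposition~\ref{prop:ineqsing}. As the oscillation interval $[a_l,a_l+u^2]$ collapses, the factor $(\lambda-b_{l-1})$ tends to $(\lambda-a_l)$, which already divides $\prod_k(\lambda-a_k)$; the integrand therefore develops a simple pole at $\lambda=a_l$, and the $\sigma_l$-contribution must be renormalized through an angle variable of the type defined in \eqref{eq:theta}. The real work is to show that $Y_l$ vanishes only when that angle attains $\pi$ and that, at $t=t_0(\eta)$, the full-oscillation normalization together with Proposition~\ref{prop:cond2} forces the angle to be strictly less than $\pi$; this is what guarantees that the positive boundary term dominates and completes the proof.
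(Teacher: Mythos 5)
There is a genuine gap, and it occurs at the very first step: the classification of geodesics contained in $N_l$. You read the remark at the start of \S\ref{sec:jac} as an equivalence and conclude that $\gamma\subset N_l$ forces $f_l\equiv a_l$ (i.e.\ $b_{l-1}=a_l$) or $f_{l+1}\equiv a_l$ (i.e.\ $b_{l+1}=a_l$). But that remark is only a one-way implication, and the generic geodesic of the totally geodesic hypersurface $N_l$ satisfies neither condition: it has $b_l=a_l$ with \emph{both} $f_l$ and $f_{l+1}$ non-constant, and it stays in $N_l=\{f_l=a_l\}\cup\{f_{l+1}=a_l\}$ because the active equation switches each time the geodesic crosses the branch locus $J_l$ (where $f_l=f_{l+1}=a_l$ and the coordinates $(x_l,x_{l+1})$ degenerate). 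Concretely, for the triaxial case a geodesic of the hyperplane section $N_l$ whose Clairaut-type constant lies strictly between $a_{l+1}$ and $a_{l-1}$ but is not equal to $a_l$ crosses $J_l$ transversally and has $b_l=a_l$, $b_{l\pm1}\ne a_l$. This is case (i) of the paper's proof and is the main case; your dichotomy omits it entirely. Moreover the argument the paper uses there is not the symplectic-pairing computation you propose: the first zero of $Y_l$ is identified with the first return of a displaced geodesic to $N_l$ via a shift of the base point (as in Lemma \ref{lemma:shift}), after which Proposition \ref{prop:noconj} (or, when $p_0\in J_l$, Proposition \ref{prop:sigma} applied to $N_l$ viewed as an $(n-1)$-dimensional Liouville manifold) gives $t_0(\eta)<t_2(\eta)$. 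Your additional hypothesis $b_l<a_l$ also silently discards the tangent case $b_l=a_l=b_{l-1}$ and the case $\gamma\subset J_l$, which the paper treats by separate limiting arguments.

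For the one configuration you do address ($b_{l-1}=a_l$, $b_l<a_l$, the geodesic disjoint from $J_l$ --- the paper's case (ii)), your plan is in the right spirit: the paper likewise reduces the claim to showing that an angle variable stays below $\pi$ at $t=t_0(\eta)$. But you defer exactly that step (``the real work is to show\dots''), and the mechanism you propose for the boundary term --- the $b_{l-1}$-derivative of the period integrals, i.e.\ Proposition \ref{prop:cond2}(2) --- is not what survives the degenerate limit $b_{l-1}\to a_l$: the collapsing oscillation interval $[a_l,a_l+u^2]$ contributes a $\pi$ through the residue at the double root $\lambda=a_l$, and the paper instead compares with the $(n-1)$-dimensional identity of Lemma \ref{lemma:flat}, obtaining formula \eqref{eq:theta7}, and then applies Proposition \ref{prop:cond2}(1) in dimension $n-1$ to the function $c-(A(\lambda)-A(a_l))/(\lambda-a_l)$ together with Proposition \ref{prop:sigma} on $N_l$. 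So even in the case you cover, the decisive estimate is asserted rather than proved, and the missing case (i) cannot be reached by the same computation.
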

The proof will be given below. This proposition
together with Proposition \ref{prop:noconj}
applied to the intersection of the Liouville manifolds $N_l$ in which the geodesic is contained show that the
mapping $\Phi|_{U_+}$ and $\Phi|_{\partial\overline{U_+}}$ are immersions.
Then, in the same way as the previous section,
we see that $\Phi|_{\overline{U_+}}$ is a local
homeomorphism. On the other hand, since $t_0(\eta)$ represents the cut point, and since
$t_-(\eta)<t_0(\eta)$, the mapping $\Phi|_{U_-}$
is a $C^\infty$ embedding and $\Phi(U_-)\cap \Phi(\overline{U_+})=\emptyset$. Also $\Phi
(U^*_{p_0})=N$ by continuity.  Therefore it follows
that $\Phi: U_{p_0}^*M\to N$ is a homeomorphism.
This indicates (2) of the theorem in this case.

In the rest of this section we shall prove 
Proposition \ref{prop:noconj2}.
We may assume that
there is only one such $l$ that the geodesic is
totally contained in $N_l$.
According to the position  of the geodesic
$\gamma(t,\eta)$, there are four different cases: (i) the
geodesic $\gamma(t,\eta)$ intersects $J_l$ transversally;
(ii) $\gamma(t,\eta)$ does not meet $J_l$; (iii)
$\gamma(t,\eta)$ is tangent to $J_l$, but not contained in 
it; (iv) $\gamma(t,\eta)$ is contained in $J_l$.

First, let us consider the case (i), and first
assume $p_0\not\in J_l$. We may also assume
$f_{l+1}(x_{l+1}^0)<b_l=a_l=f_l(x_l^0)$; the case
where $f_{l+1}(x_{l+1}^0)=b_l=a_l<f_l(x_l^0)$ is
similar. Note that $f_l(x_l^0)<b_{l-1}$ in this case,
since the intersection of $\gamma(t,\eta)$
and $J_l$ is transversal in $N_l$. Then the 
Jacobi field $Y_l(t)$ is given by the one-parameter family of geodesics $\{\gamma(t,\eta_s)
\}$, where $\eta_s\in U_{p_0}^*M$ satisfies
$\eta_0=\eta$ and $H_l(\eta_s)=b_l-s^2$, 
$H_j(\eta_s)=b_j$ for $j\ne l$.

To show the proposition in this case, 
we use a similar technique
as Lemma \ref{lemma:shift}, which is as follows.
Take a point $p'_0$ in such
a way that $p'_0$ is represented as
$(x_1^0,\dots,x_{l}^1,\dots,x_n^0)$, where
$0=x_l^0<x_l^1< \alpha_l/4$ and $f_l(x_l^1)<
b_{l-1}, a_{l-1}$. Let $U_{l-}'$ be the hemisphere of $U^*_{p'_0}M$ defined by
$\xi_l<0$ and so be $U_{l-}$ in $U^*_{p_0}M$. 
Taking a sufficiently small neighborhood $W$ of 
$\eta$ in $U_{p_0}^*M$, we define the mapping 
$\psi: \overline{U_{l-}}\cap W\to U'_{l-}$
so that it preserves the values of $H_i$ $(1\le i\le n-1)$, i.e., by 
$\psi(p_0; \xi_1,\dots,\xi_n)
=(p'_0; \tilde\xi_1,\dots,\tilde\xi_n)$, where
\begin{equation*}
\tilde\xi_i=\xi_i\quad (i\ne l),\qquad
\tilde\xi_l=\sqrt{(-1)^{l-1}\prod_{k\ne l}
(f_l(x_l^1)-H_k)}\ .
\end{equation*}
Note that $H_k$'s are functions of
$(p_0; \xi_1,\dots,\xi_n)\in \overline{U_{l-}}$.

Let $\tilde x_l^1$ be the value of $x_l(t,\psi(\eta_s))$ at the time when $\sigma_l(t,\psi(\eta_s))=2(a_{l-1}^--a_l^+)$, which is $-x_l^1$
or $x_l^1+\alpha_l/2$. Also, $\tilde x_l^0$
is similarly defined.
Let $N'$ be the
submanifold of $M$ defined by 
$x_l=\tilde x_l^1$, and define the diffeomorphism
$\Psi: N'\to N_l$ by
\begin{equation*}
\Psi(x_1,\dots,,\tilde x_l^1,\dots,x_n)=
(x_1,\dots,\tilde x_l^0,\dots,x_n).
\end{equation*}
Then we have the following lemma. The proof being similar to that for Lemma \ref{lemma:shift},
we omit.
\begin{lemma}
$\Psi(\gamma(t_2(\psi(\eta_s)),\psi(\eta_s)))=
\gamma(t_2(\eta_s),\eta_s)$ for any $s>0$, 
where $t_2(\eta_s)$
denotes the time when $\sigma_l
(t_2(\eta_s),\eta_s)=2(a_{l-1}^--a_l^+)$.
\end{lemma}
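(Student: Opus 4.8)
The plan is to reproduce the proof of Lemma \ref{lemma:shift} almost verbatim, with the index $l$ now playing the distinguished role that $n$ played there. Since the map $\psi$ is defined so as to preserve the values $b_1,\dots,b_{n-1}$ of $H_1,\dots,H_{n-1}$, the two geodesics $\gamma(t,\eta_s)$ and $\gamma(t,\psi(\eta_s))$ have identical first-integral data, and each satisfies the separated geodesic equation (\ref{eq:geodsigma}) for every polynomial $G(\lambda)$ of degree $\le n-2$. First I would write this equation at the terminal time $t=t_2(\eta_s)$ for the first geodesic and at $t=t_2(\psi(\eta_s))$ for the second.

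The first key step is to eliminate the $l$-th summand. By the definition of $t_2$ we have $\sigma_l=2(a_{l-1}^--a_l^+)$ at the terminal time for \emph{both} geodesics, so $f_l$ runs through exactly one full period in the variable $\sigma_l$. The integrand is a periodic function of $\sigma_l$ with this period, so the integral over one full period does not depend on the starting phase; this is exactly the computation (\ref{eq:geodnth}) with $n$ replaced by $l$, and it produces $2\int_{a_l^+}^{a_{l-1}^-}(-1)^lG(\lambda)A(\lambda)/\sqrt{-\prod_{k}(\lambda-b_k)\prod_k(\lambda-a_k)}\,d\lambda$ in both cases. Because the $b_k$ (hence the integrand and the integration limits) coincide for $\eta_s$ and $\psi(\eta_s)$, the two $l$-th terms are equal and cancel upon subtracting the two instances of (\ref{eq:geodsigma}), leaving, for every such $G$,
\begin{multline*}
\sum_{i\ne l}\int_0^{\sigma_i(t_2(\eta_s),\eta_s)}\frac{(-1)^iG(f_i)A(f_i)\,d\sigma_i}{\sqrt{-\prod_{k=1}^{n-1}(f_i-b_k)\cdot\prod_{k=0}^n(f_i-a_k)}}\\
=\sum_{i\ne l}\int_0^{\sigma_i(t_2(\psi(\eta_s)),\psi(\eta_s))}\frac{(-1)^iG(f_i)A(f_i)\,d\sigma_i}{\sqrt{-\prod_{k=1}^{n-1}(f_i-b_k)\cdot\prod_{k=0}^n(f_i-a_k)}}\ .
\end{multline*}

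The second key step is the sign-separation argument already used in Proposition \ref{prop:refl}. If the set $I=\{i\ne l\mid \sigma_i(t_2(\eta_s),\eta_s)>\sigma_i(t_2(\psi(\eta_s)),\psi(\eta_s))\}$ were nonempty, then applying Lemma \ref{lem:G} with $I_1=I$ and $I_2$ its complement in $\{1,\dots,n\}$ (note that $l\in I_2$, so $I_2\ne\emptyset$) yields a polynomial $G$ of degree $\le n-2$ for which each surviving term compares strictly the same way, contradicting the displayed equality; the reverse inequality is ruled out symmetrically. Hence $\sigma_i(t_2(\eta_s),\eta_s)=\sigma_i(t_2(\psi(\eta_s)),\psi(\eta_s))$ for all $i\ne l$. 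Since $p_0$ and $p'_0$ share the same $i$-th coordinate and $\tilde\xi_i=\xi_i$ (hence the same sign $\epsilon_i$) for $i\ne l$, the trajectory $f_i(\sigma_i)$, and therefore $x_i$ as a function of $\sigma_i$, is identical for the two geodesics, so $x_i(t_2(\eta_s),\eta_s)=x_i(t_2(\psi(\eta_s)),\psi(\eta_s))$ for every $i\ne l$. For $i=l$ the terminal coordinates are $\tilde x_l^0$ and $\tilde x_l^1$ by the definition of $t_2$ and of those values, and $\Psi$ sends $\tilde x_l^1$ to $\tilde x_l^0$. Combining the two facts gives precisely $\Psi(\gamma(t_2(\psi(\eta_s)),\psi(\eta_s)))=\gamma(t_2(\eta_s),\eta_s)$.

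The main obstacle is the bookkeeping in the first step: one must be certain that at the terminal time $f_l$ has completed an \emph{entire} period, so that the phase-independence of the one-period integral can be invoked and the $l$-th term genuinely cancels. This rests on $f_l(x_l^1)$ lying strictly inside the oscillation interval $(a_l^+,a_{l-1}^-)$, which is guaranteed by $a_l^+=a_l$ (as $b_l=a_l-s^2<a_l$ for $s>0$) together with the conditions $f_l(x_l^1)<b_{l-1},a_{l-1}$ built into the choice of $p'_0$. Once the $l$-th term is neutralized in this way, the remainder is word-for-word the argument of Proposition \ref{prop:refl} and Lemma \ref{lemma:shift}, so I would not reproduce those details.
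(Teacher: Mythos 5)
Your proposal is correct and follows exactly the route the paper intends: the paper omits this proof with the remark that it is ``similar to that for Lemma \ref{lemma:shift}'', and your argument is precisely that adaptation, with the index $l$ taking over the role of $n$ --- the full-period cancellation of the $l$-th integral (the analogue of (\ref{eq:geodnth})), the sign-separation via Lemma \ref{lem:G} with $l\in I_2$, and the identification of the terminal $x_l$-coordinates through $\Psi$. Your closing remark about why $f_l(x_l^1)$ lies strictly inside $(a_l^+,a_{l-1}^-)$ is exactly the bookkeeping the paper builds into the choice of $p_0'$, so nothing is missing.
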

Since $t=t_2(\eta_s)$ is the first positive time 
when the geodesic $\gamma(t,\eta_s)$ reach $N_l$
again, it follows that $t_2(\eta_0)=\lim_{s\to 0}
t_2(\eta_s)$ is the first positive time when
the Jacobi field $Y_l(t)$ vanishes.
Applying Proposition \ref{prop:noconj} to the
geodesic $\gamma(t,\psi(\eta_0))$, we have
$t_0(\psi(\eta_0))<t_2(\psi(\eta_0))$. Since
\begin{equation*}
\sigma_n(t_2(\psi(\eta_s)),\psi(\eta_s))=
\sigma_n(t_2(\eta_s),\eta_s),
\end{equation*}
we then have $\sigma_n(t_2(\eta_0),\eta_0)>2(a_{n-1}^--a_n^+)$, which implies
$t_0(\eta_0)<t_2(\eta_0)$, and hence $Y_l(t_0(\eta_0))\ne 0$.

Next, let us consider the case (i) with the condition $p_0\in J_l$.
Let $\eta_s\in U_{p_0}^*M$ be as above so that
the geodesic $\gamma(t,\eta_0)$ is transversal
to $J_l$ in $N_l$. Then the family of geodesics
$\{\gamma(t,\eta_s)\}_{s>0}$ coincides with the family
$\{\gamma(t,\zeta_r(\eta_{s_0}))\}$ for a fixed $s_0>0$, where $\{\zeta_r\}$ is the one-parameter group
of diffeomorphisms of $U^*M$ generated by
$X_{F_l}$. Thus, in this case, the first positive
time $t_2(\eta_0)$ when the Jacobi field $Y_l(t)$
vanishes has the property that
\begin{equation*}
\gamma(t_2(\eta_0),\eta_s)=\gamma(t_2(\eta_0),
\eta_0)\in J_l\ ,\quad \sigma_l(t_2(\eta_0),\eta_s)=2(a_{l-1}^--a_l^+)\ .
\end{equation*}
Now, let us consider $N_l$ as an $(n-1)$-dimensional Liouville manifold constructed from the constants $a_j$
$(j\ne l)$ and the function $A(\lambda)$. Then the
variables $f_l(x_l)$ and $f_{l+1}(x_{l+1})$ are connected
to a single variable whose range is $[a_{l+1},a_{l-1}]$,
and the total variation of this variable along the
geodesic $\gamma(t,\eta_0)$ $(0\le t\le t_2(\eta_0))$ is
equal to $2(a_{l-1}^--a_{l+1}^+)$. Hence by Proposition
\ref{prop:sigma} for the $(n-1)$-dimensional manifold $N_l$,
we have $t_0(\eta_0)<t_2(\eta_0)$, and thus $Y_l(t_0(\eta_0))
\ne 0$.

Next, we shall consider the case (ii); the geodesic 
$\gamma(t,\eta)$ does not intersects $J_l$. There are
two cases: $a_l=f_l(x_l(t,\eta))=b_{l-1}$; $b_{l+1}=
f_{l+1}(x_{l+1}(t,\eta))=a_l$. The proofs for them are similar, so we may assume $a_l=b_{l-1}$. Note that
$b_l<a_l$ in this case, since $\gamma(t,\eta)$ does not meet
$J_l$.
The Jacobi field $Y_l(t)$ is given by the one-parameter family of geodesics $\{\gamma(t,\eta_s)
\}$, where $\eta_s\in U_{p_0}^*M$ satisfies
$\eta_0=\eta$ and $H_{l-1}(\eta_s)=a_l+s^2$, 
$H_j(\eta_s)=b_j$ for $j\ne l-1$. Define $\theta_s(t)$ by the formula
\begin{equation*}
f_l(x_l(t,\eta_s))=a_l(\cos\theta_s(t))^2+H_{l-1}(\eta_s)
(\sin\theta_s(t))^2\ ,\quad \theta_s(0)=0
\end{equation*}
and put $\theta_0(t)=\lim_{s\to 0}\theta_s(t)$.
Let $t_2(\eta)$ be the time such that 
$\theta_0(t_2(\eta))=\pi$. Then $t=t_2(\eta)$ is the first
positive time when $Y_l(t)=0$. We shall show that
$t_0(\eta)<t_2(\eta)$.
We have
\begin{gather*}
\sum_{i\ne l}\int_0^{\sigma_i(t_2(\eta),\eta)}\frac{(-1)^iG_{l,l-1}(f_i)A(f_i)\ d\sigma_i}
{|f_i-a_l|\sqrt{-\prod_{k\ne l-1}(f_i-b_k)
\cdot\prod_{k\ne l}(f_i-a_k)}}\\
+\frac{(-1)^l2\pi\ G_{l,l-1}(a_l)A(a_l)}
{\sqrt{-\prod_{k\ne l-1}(a_l-b_k)
\cdot\prod_{k\ne l}(a_l-a_k)}}
=0\ .
\end{gather*}
Also, a similar observation as in the proof of Lemma \ref{lemma:flat} indicates
\begin{gather*}
-2\sum_{i\ne l}\int_{a_i^+}^{a_{i-1}^-}
\frac{(-1)^{i}G_{l,l-1}(\lambda)A(a_l)\ d\lambda}
{|\lambda-a_l|\sqrt{-\prod_{k\ne l-1}(\lambda-b_k)
\cdot\prod_{k\ne l}(\lambda-a_k)}}\\
=\frac{(-1)^l2\pi\ G(a_l)A(a_l)}
{\sqrt{-\prod_{k\ne l-1}(a_l-b_k)
\cdot\prod_{k\ne l}(a_l-a_k)}}\ .
\end{gather*}
Thus we have the formula:
\begin{equation}\label{eq:theta7}
\begin{gathered}
2\sum_{i\ne l}\int_{a_i^+}^{a_{i-1}^-}
\frac{A(\lambda)-A(a_l)}{|\lambda-a_l|}
\frac{(-1)^{i}G_{l,l-1}(\lambda)\ d\lambda}
{\sqrt{-\prod_{k\ne l-1}(\lambda-b_k)
\cdot\prod_{k\ne l}(\lambda-a_k)}}\\
=\sum_{i\ne l}\int_{\sigma_i(t_2(\eta),\eta)}^{2(a_{i-1}^--a_i^+)}\frac{(-1)^iG_{l,l-1}(f_i)A(f_i)\ d\sigma_i}
{|f_i-a_l|\sqrt{-\prod_{k\ne l-1}(f_i-b_k)
\cdot\prod_{k\ne l}(f_i-a_k)}}\ .
\end{gathered}
\end{equation}
Take a sufficiently large constant $c>0$ and put
\begin{equation*}
B(\lambda)=c-\frac{A(\lambda)-A(a_l)}{\lambda-a_l}\ ,
\qquad [i]=i\ (i<l);\quad =i-1\ (i>l)\ .
\end{equation*}
Then, by Lemma \ref{lemma:flat} ($(n-1)$-dimensional
case), the left-hand side of the formula \eqref{eq:theta7}
is rewritten as
\begin{equation*}
2\sum_{[i]=1}^{n-1}\int_{a_i^+}^{a_{i-1}^-}
\frac{(-1)^{[i]+1}G_{l,l-1}(\lambda)B(\lambda)\ d\lambda}
{\sqrt{-\prod_{[k]=1}^{n-1}((\lambda-a_{k}^-)
(\lambda-a_{k}^+))}}\ .
\end{equation*}
Since $B(\lambda)$ satisfies the condition (\ref{cond2}), the
above value is positive by Proposition \ref{prop:cond2} (1) 
($(n-1)$-dimensional case). If $t_0(\eta)\ge t_2(\eta)$, 
then, applying 
Proposition \ref{prop:sigma} to the Liouville manifold
$N_l$, we have $\sigma_i(t_2(\eta),\eta)\le 2(a_{i-1}^--a_i^+)$
for any $i\ne l$. This indicates that the right-hand side 
of the
formula \eqref{eq:theta7} is nonpositive, a contradiction.
Therefore, it 
follows that $t_0(\eta)<t_2(\eta)$, and $Y_l(t_0(\eta))\ne 
0$.

Next, we shall consider the case (iii); $\gamma(t,\eta)$ is 
tangent to $J_l$, but not contained in it. First, we assume
$p_0\not\in J_l$. In this case, it holds that either 
$f_{l+1}(x_{l+1}^0)
<b_l=a_l=f_l(x_l^0)=b_{l-1}$ or $b_{l+1}=f_{l+1}(x_{l+1}^0)
=a_l=b_l<f_l(x_l^0)$.
Since the proofs are similar, we may assume 
\begin{equation*}
f_{l+1}(x_{l+1}^0)
<b_l=a_l=f_l(x_l^0)=b_{l-1}\ .
\end{equation*}
Define a one-parameter family of unit covectors $\eta_s$
at $p_0$ such that $\eta_0=\eta$, $H_{l}(\eta_s)=a_l-s^2$,
and $H_j(\eta_s)=b_j$ for $j\ne l$. Then, the geodesics
$\gamma(t,\eta_s)$ $(s\ne 0)$ are still on $N_l$, but do 
not meet $J_l$. Since the zeros of a 
family of Jacobi fields are continuously depending on the
parameter, it follows that $\lim_{s\to 0}t_2(\eta_s)=
t_2(\eta)$ represents the first positive time $t$ such that
$Y_l(t)=0$.
Now, substitute $\eta=\eta_s$ in the formula \eqref{eq:theta7} and take a limit $s\to 0$. Then, if
$t_0(\eta)\ge t_2(\eta)$, one gets a similar contradiction
as above. Thus we have $t_0(\eta)<t_2(\eta)$, and $Y_l(t_0(\eta))\ne 0$ in this case.

Next, we assume that $p_0\in J_l$. Let $\eta_s\in U_{p_0}^*M$ 
$(\eta_0=\eta)$ be a one-parameter family of covectors such that the infinitesimal variation of the geodesics $\{\gamma(t,\eta_s)\}$ at $s=0$ is equal to $Y_l(t)$. Let $t_2(\eta_s)$ be the
first positive time such that $\gamma(t,\eta_s)\in N_l$.
Then, $t_2(\eta)=\lim_{s\to 0}t_2(\eta_s)$ is the first
positive time such that $Y_l(t)=0$. Also, by the same reason
as in the case (i), we have $\gamma(t_2(\eta_s),\eta_s)\in 
J_l$ and so does for $s=0$. Hence we have $\sigma_{l+1}(t_2(
\eta),\eta)=2(a_l^--a_{l+1}^+)$, and thus $t_0(\eta)<t_2(
\eta)$ by Proposition \ref{prop:sigma}. 

Finally, let us consider the case (iv); $\gamma(t,\eta)$ is 
contained in $J_l$. In this case, we have
\begin{equation*}
b_{l+1}=f_{l+1}(x_{l+1}^0)
=b_l=a_l=f_l(x_l^0)=b_{l-1}\ .
\end{equation*}
Define the one-parameter family of the initial points $p_0(s)$ and the initial covectors $\eta_s\in U_{p_0(s)}^*M$
so that $H_{l+1}(\eta_s)=H_l(\eta_s)=b_l-s^2$ and $H_i(\eta_s)= b_i$ $(i\ne l,l+1)$. Then the formula \eqref{eq:theta7} is valid for $\eta_s$. Taking a limit $s\to 0$, we have:
\begin{gather*}
2\sum_{\substack{1\le [i]\le n-1\\ [i]\ne l}}
\int_{a_i^+}^{a_{i-1}^-}
\frac{(-1)^{[i]+1}G_{l,l-1}(\lambda)B(\lambda)\ d\lambda}
{\sqrt{-\prod_{[k]=1}^{n-1}((\lambda-a_{k}^-)
(\lambda-a_{k}^+))}}\\
=\sum_{i\ne l,l+1}\int_{\sigma_i(t_2(\eta),\eta)}^{2(a_{i-1}^--a_i^+)}\frac{(-1)^iG_{l,l-1}(f_i)A(f_i)\ d\sigma_i}
{|f_i-a_l|\sqrt{-\prod_{k\ne l-1}(f_i-b_k)
\cdot\prod_{k\ne l}(f_i-a_k)}}\ .
\end{gather*}
Since the left-hand side of the above formula is positive by
Proposition \ref{prop:cond2},
we have $t_0(\eta)<t_2(\eta)$ as before. This completes the
proof of Proposition \ref{prop:noconj2}.
\section{Cut locus (3)}
In this section, we shall give a proof of Theorem 
\ref{thm:cut} (2) for the cases (III) and (IV), and (3) for 
the case (V). First, we shall consider the case (III); $p_0
\in N_n$. 

We use Lemma \ref{lemma:shift} in the case where $x_n^1=0$ 
and use it by exchanging $p_0$ and $p'_0$.
As a consequence, we see that the mapping
\begin{equation*}
(U_{p_0}^*M\supset)\ U_+\ni \eta\longmapsto\gamma(t_0(\eta),\eta)\in N_n
\end{equation*}
is a $C^\infty$ embedding. Therefore, to prove (2) in this
case it is enough to show that the mapping
\begin{equation}\label{eq:partial}
\partial\overline{U_+}\ni \eta\longmapsto\gamma(t_0(\eta),\eta)\in N_n
\end{equation}
is an embedding.

For $p_0\in N_{n}$ and $\eta\in U^*_{p_0}N_n$, let $\tilde
t_0(\eta)$ denotes the value which is defined in the same way 
as
$t_0(\eta)$ for the $(n-1)$-dimensional Liouville manifold 
$N_n$. (Note that $N_n$ is constructed from the constants 
$0<a_{n-1}<\dots<a_0$ and the function $A(\lambda)$ as in
\S2.)  As we have proved in (1), $t=\tilde t_0(\eta)$ gives the
cut point of $p_0$ along the geodesic $\gamma(t,\eta)$
in $N_n$. In particular, we have $t_0(\eta)\le
\tilde t_0(\eta)$. Therefore, the following proposition will indicate
that the mapping \eqref{eq:partial} is an embedding.

\begin{prop}\label{prop:sect7}
$t_0(\eta)<\tilde t_0(\eta)$ for any $p_0\in N_{n}$ and 
$\eta\in U^*_{p_0}N_n$.
\end{prop}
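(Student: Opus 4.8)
The plan is to realise $t_0(\eta)$ as a limit of genuinely $n$-dimensional data and to compare it with the intrinsic cut time $\tilde t_0(\eta)$ through the length integral \eqref{eq:t0}, extracting the strict inequality from the monotonicity hypothesis (\ref{cond2}) in the same manner as Propositions \ref{prop:sigma} and \ref{prop:ineqsing}. Since $\eta$ is tangent to the totally geodesic $N_n$, the geodesic $\gamma(\cdot,\eta)$ stays in $N_n$ and its parameters satisfy $b_{n-1}=a_n$; by the device of Proposition \ref{prop:conti} it is enough to treat the case in which $b_1,\dots,b_{n-2}$ and $a_0,\dots,a_{n-1}$ are mutually distinct, where $t_0(\eta)=\lim_{s\to0}t_0(\eta_s)$ for the regular covectors $\eta_s$ with $b_{n-1}=a_n+s^2$, which satisfy $\sigma_n(t_0(\eta_s),\eta_s)=2s^2$. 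Writing $T_i:=2(a_{i-1}^--a_i^+)$ and applying Proposition \ref{prop:sigma} to $\eta_s$, then letting $s\to0$, gives $\sigma_i(t_0(\eta))\le T_i$ for every $i\le n-1$; since $\tilde t_0(\eta)$ is the first instant at which $\sigma_{n-1}=T_{n-1}$, this already yields $t_0(\eta)\le\tilde t_0(\eta)$, so the real content is the strictness $\sigma_{n-1}(t_0(\eta))<T_{n-1}$.

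To obtain it I would pass to the limit $s\to0$ in the orbit equation \eqref{eq:geodsigma1} for $\eta_s$. The $i=n$ integral runs over the collapsing interval $[a_n,a_n+s^2]$ and, because $\int_{a_n}^{a_n+s^2}d\lambda/\sqrt{(a_n+s^2-\lambda)(\lambda-a_n)}=\pi$, tends to a residue proportional to $2\pi$; the limiting identity reads, for every polynomial $G$ with $\deg G\le n-2$,
\begin{equation*}
\sum_{i=1}^{n-1}\int_0^{\sigma_i(t_0)}\frac{(-1)^iG(f_i)A(f_i)\,d\sigma_i}{(f_i-a_n)\sqrt{-\prod_{k=1}^{n-2}(f_i-b_k)\prod_{k=0}^{n-1}(f_i-a_k)}}+2\pi\,\frac{(-1)^nG(a_n)A(a_n)}{\sqrt{\prod_{k=1}^{n-2}(a_n-b_k)\prod_{k=0}^{n-1}(a_n-a_k)}}=0,
\end{equation*}
the exact analogue of the $\theta$-equation \eqref{eq:theta} with the frozen value at the boundary $a_n$ and the residue acting as a ``full'' $n$-th coordinate. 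Splitting each $\int_0^{\sigma_i(t_0)}$ as $\int_0^{T_i}-\int_{\sigma_i(t_0)}^{T_i}$, and removing the residue together with the constant part by the limiting form of Lemma \ref{lemma:flat}, I would arrive at the balance
\begin{equation*}
2\sum_{i=1}^{n-1}\int_{a_i^+}^{a_{i-1}^-}\frac{(-1)^iG(\lambda)\bigl(A(\lambda)-A(a_n)\bigr)\,d\lambda}{(\lambda-a_n)\sqrt{-\prod_{k=1}^{n-2}(\lambda-b_k)\prod_{k=0}^{n-1}(\lambda-a_k)}}=\sum_{i=1}^{n-1}\int_{\sigma_i(t_0)}^{T_i}\frac{(-1)^iG(f_i)A(f_i)\,d\sigma_i}{(f_i-a_n)\sqrt{-\prod_{k=1}^{n-2}(f_i-b_k)\prod_{k=0}^{n-1}(f_i-a_k)}}.
\end{equation*}

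Finally I would argue by contradiction, assuming $\sigma_{n-1}(t_0)=T_{n-1}$. Put $I_1=\{i\le n-1:\sigma_i(t_0)=T_i\}$ (so $n-1\in I_1$ and, by the first paragraph, $\sigma_i(t_0)\le T_i$ throughout), let $I_2$ be the complement, and append the residue as an anchor index $n\in I_2$. If $I_1\ne\{1,\dots,n-1\}$ I take the sign-pattern polynomial $G$ of Lemma \ref{lem:G} for this partition of $\{1,\dots,n\}$; since $n-1\in I_1$ and $n\in I_2$ lie in different blocks, $G$ omits the factor $\lambda-a_n$ and has degree $\le n-3$. Then the right-hand side of the balance is $\le0$ (the $I_1$-terms vanish as $\sigma_i(t_0)=T_i$, the $I_2$-terms are strictly negative as $\sigma_i(t_0)<T_i$), while the left-hand side is strictly positive: writing $A(\lambda)-A(a_n)=(\lambda-a_n)(c-B)$ with $B=c-\bigl(A(\lambda)-A(a_n)\bigr)/(\lambda-a_n)$ a positive function satisfying (\ref{cond2}) for $c$ large (as in Lemma \ref{lemma:cond}) and annihilating the constant $c$ by Lemma \ref{lemma:flat}, one applies the $(n-1)$-dimensional Proposition \ref{prop:cond2}(1), the parity reducing to $(-1)^{(n-1)-i+\#K}=(-1)^i$ uniformly. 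The remaining case $I_1=\{1,\dots,n-1\}$ is disposed of directly with $G=\prod_{k=1}^{n-2}(\lambda-b_k)$, for which $(-1)^iG<0$ on every interval makes the left-hand side negative while the right-hand side vanishes. Either way one reaches a contradiction, so $\sigma_{n-1}(t_0)<T_{n-1}$ and $t_0(\eta)<\tilde t_0(\eta)$. Coincidences $b_j=b_{j-1}$ or $b_i=a_i$ are absorbed by the continuity arguments already used in \S\S5--8. The hard part is precisely this last sign-and-parity bookkeeping, namely aligning the Lemma \ref{lem:G} polynomial with both the boundary residue, serving as the anchor, and the reduced Proposition \ref{prop:cond2}(1).
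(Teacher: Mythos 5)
Your argument is correct and is, in its analytic core, the same as the paper's: both pass to the limit $b_{n-1}=a_n+s^2\to a_n$ in the orbit equation so that the collapsing $n$-th integral contributes a residue $2\pi$, subtract the limiting form of Lemma~\ref{lemma:flat} to produce a balance between $\sum_i\int_{\sigma_i(t_0)}^{T_i}$ and a closed-form integral of $(A(\lambda)-A(a_n))/(\lambda-a_n)$, and then invoke the $(n-1)$-dimensional Proposition~\ref{prop:cond2}\,(1) applied to $B(\lambda)=c-(A(\lambda)-A(a_n))/(\lambda-a_n)$. The only real divergence is the endgame: the paper fixes the single test polynomial $G_{n-1,n-2}$, whose sign pattern makes every right-hand integrand negative for $i\le n-2$ and positive only for $i=n-1$, so that the hypothesis $\sigma_{n-1}(t_0)=T_{n-1}$ kills the one positive term and yields the contradiction in a single stroke; you instead rerun the partition-and-sign-pattern machinery of Proposition~\ref{prop:sigma} with the Lemma~\ref{lem:G} polynomial adapted to $I_1=\{i:\sigma_i(t_0)=T_i\}$, plus a separate degenerate case $I_1=\{1,\dots,n-1\}$. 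Your version costs more bookkeeping (and your parenthetical claim that the parity reduces to $(-1)^{(n-1)-i+\#K}=(-1)^i$ ``uniformly'' is only literally true when $1\in I_2$; when $1\in I_1$ the discrepancy is absorbed by the minus sign in $G=-\prod_{k\in K}(\lambda-b_k)$, exactly as in the proof of Proposition~\ref{prop:sigma}), but it buys a self-contained treatment of the inequality $\sigma_i(t_0)\le T_i$, which the paper's one-polynomial argument also needs for $i\le n-2$ but leaves implicit. Both routes are sound.
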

\begin{proof}
We use the formula
\begin{gather*}
\sum_{i=1}^{n-1}\int_{a_i^+}^{a_{i-1}^-}\frac{
(-1)^{i+1}G_{n-1,n-2}(\lambda)B(\lambda)}{
\sqrt{-\prod_{k=1}^{n-2}(\lambda-b_k)\prod_{k=0}^{n-1}
(\lambda-a_k)}}\,d\lambda\\
=\sum_{i=1}^{n-1}\int_{\sigma_i(t_0(\eta),\eta)}^{
2(a_{i-1}^--a_i^+)}\frac{
(-1)^{i}G_{n-1,n-2}(f_i) A(f_i)}{
(f_i-a_n)
\sqrt{-\prod_{k=1}^{n-2}(f_i-b_k)\prod_{k=0}^{n-1}
(f_i-a_k)}}\,d\sigma_i\ ,
\end{gather*}
where
\begin{equation*}
B(\lambda)=c-\frac{A(\lambda)-A(a_n)}{\lambda-a_n}
\end{equation*}
and $c>0$ is a sufficiently large constant. As 
before, the left-hand side of the above formula is 
positive, whereas each integrand of the right-hand 
side is negative for $i\le n-2$. Thus, if
$t_0(\eta)=\tilde t_0(\eta)$, then
\begin{equation*}
2(a_{n-2}^--a_{n-1}^+)=\sigma_{n-1}(\tilde t_0(\eta),
\eta)=\sigma_{n-1}(t_0(\eta),\eta),
\end{equation*}
and we have a contradiction. Therefore it follows 
that $t_0(\eta)<\tilde t_0(\eta)$.
\end{proof}
Next, we shall consider the case (IV); $x_n^0=\alpha_n/4$
and $p_0\not\in J_{n-1}$. By the similar fact  as 
Lemma \ref{lemma:shift} and by
the proved cases, we see that the map $\eta\mapsto
\gamma(t_0(\eta),\eta)$ gives $C^\infty$ embeddings
$U_+\to N$ and $\partial \overline{U_+}\to N$,
where $N$ is the subset of $N_{n-1}$ such that
$x_n=-\alpha_n/4$. To see that
the cut locus $C(p_0)$, the union of the images of 
those maps, is in the interior of $N$, it is enough to show
that $C(p_0)$ does not meet $J_{n-1}$, a connected component
of which is equal to the boundary of $N$. Assume that
$\gamma(t_0(\eta),\eta)\in J_{n-1}$ for some $\eta\in
\overline{U_+}$. By Lemma \ref{lemma:subm} we see that
$F_{n-1}(\eta)=0$. Since $p_0\not\in J_{n-1}$ and $p_0\in
N_{n-1}$, it thus follows that $\eta\in U^*_{p_0}N_{n-1}$,
i.e., $\eta\in\partial\overline{U_+}$. Now put
\begin{equation*}
\gamma(t)=\gamma(t_0(\eta)-t,\eta)
\end{equation*}
Then, $\gamma(t)$ is a geodesic starting at $\gamma(t_0(\eta),
\eta)\in J_{n-1}$ and its first conjugate point is $p_0=
\gamma(t_0(\eta))$. But, as we shall see just below, the first
conjugate point of any geodesic starting at a point in 
$J_{n-1}$ also belongs to $J_{n-1}$, which is a contradiction.
Thus $C(p_0)$ is contained in the interior of $N$.
This finishes the proof of (2) of the theorem
in this case.

Finally we prove the statement (3) of the theorem
for the case (V); $p_0\in J_{n-1}$. Note that
$t=t_0(\eta)$
gives the cut point of $p_0$ along the geodesic 
$\gamma(t,\eta)$ for any $\eta\in U^*_{p_0}M$.
We apply the results proved above
to the $(n-1)$-dimensional Liouville manifold $N_{n-1}$,
which is constructed from the constants $0<a_n<a_{n-2}<\dots
<a_0$ and the function $A(\lambda)$. 
Noting the fact $J_{n-1}\cap J_{n-2}=\emptyset$,
we see that the cut locus $\tilde C(p_0)$
of $p_0$ in $N_{n-1}$ is an $(n-2)$-closed disk, and it is the
image of the map
\begin{equation*}
\overline{U_+}\cap T^*_{p_0}N_{n-1}\to J_{n-1},
\qquad \eta\mapsto \gamma(\bar t_0(\eta),\eta),
\end{equation*}
where $\bar t_0(\eta)$ is the value which is defined in the 
same way as
$t_0(\eta)$ for the $(n-1)$-dimensional Liouville manifold 
$N_{n-1}$. It has also been proved that the above map is
an embedding on the interior and on the boundary.

Let $\tilde \eta$ be a unit covector such that
$\tilde \eta\not\in T^*_{p_0}N_{n-1}$. Let $\{\zeta_s\}$
be the one-parameter transformation group of $T^*M$ generated 
by $X_{F_{n-1}}$. Then $\tilde\eta_s=\zeta_s(\tilde \eta)\in U^*_{p_0}M$
whose orthogonal projection to $T^*_{p_0}J_{n-1}$ 
does not depend on $s$,
and $\tilde\eta_{\pm\infty}=\lim_{s\to\pm\infty}\tilde\eta_s\in
T^*_{p_0}N_{n-1}$.  By the definition of $t_0(\tilde\eta_s)$
we have $\gamma(t_0(\tilde \eta_s),\tilde\eta_s)\in J_{n-1}$.
Therefore the Jacobi field $\pi_*X_{F_{n-1}}$ along the
geodesic $\gamma(t,\tilde\eta_s)$ also vanish at $t=
t_0(\tilde\eta_s)$. Thus we have
\begin{equation*}
\gamma(t_0(\tilde \eta_s),\tilde\eta_s)=
\gamma(t_0(\tilde\eta_{\pm\infty}),\tilde\eta_{\pm\infty}),\qquad
t_0(\tilde\eta_s)=t_0(\tilde\eta_{\pm\infty})
\end{equation*}
for any $s\in\R$. Since $t=t_0(\tilde\eta_s)$
gives the cut point of $p_0$ along the geodesic
$\gamma(t,\tilde\eta_s)$, and since $\tilde\eta_{+\infty}
\in U^*N_{n-1}$ and $\tilde\eta_{-\infty}\in U^*N_{n-1}$
are symmetric with respect to the hyperplane 
$T^*_{p_0}J_{n-1}\subset T^*_{p_0}N_{n-1}$, it follows that
$\bar t_0(\eta_{\pm\infty})=t_0(\eta_{\pm\infty})$.
Thus we have proved that the cut locus $C(p_0)$ of $p_0$ in
$M$ coincides with $\tilde C(p_0)$ and that if $\eta_1$,
$\eta_2\in U^*_{p_0}M$
have the same $T^*_{p_0}J_{n-1}$-components, then $\gamma(
t_0(\eta_1),\eta_1)=\gamma(t_0(\eta_2),\eta_2)$. From these
it also follows that for $\eta\in U^*_{p_0}J_{n-1}$, 
$t=t_0(\eta)$ gives the first conjugate
point of $p_0$ with multiplicity two along the geodesic 
$\gamma(t,\eta)$. This finishes the proof of Theorem
\ref{thm:cut}.


\end{document}